\newtheorem{prop}{Proposition}
\newtheorem{rem}[prop]{Remark}
\newtheorem{thm}[prop]{Theorem}
\newtheorem{defn}[prop]{Definition}
\newtheorem{lem}[prop]{Lemma}
\newtheorem{cor}[prop]{Corollary}
\newtheorem{ass}[prop]{Assumption}
\newtheorem{cond}[prop]{Condition}
\newcommand{\extra}[1]{}%{#1}
\newcommand{\Vf}{{\hat{V}}}
\newcommand{\Hf}{{\hat{H}}}
\newcommand{\Ff}{{\hat{F}}}
\newcommand{\Ac}{\mathcal{A}}
\newcommand{\pfix}{{\hat{p}}}
\newcommand{\weight}{\omega}
\newcommand{\bias}{\beta}
\newcommand{\Gc}{\mathcal{G}}
\newcommand{\Rc}{\mathcal{R}}
\newcommand{\Yc}{\mathcal{Y}}
\newcommand{\Vc}{\mathcal{V}}
\newcommand{\Wc}{\mathcal{W}}
\newcommand{\Lc}{\mathcal{L}}
\newcommand{\C}{ \mathcal{C} }
\newcommand{\B}{ \mathcal{B} }
\newcommand{\G}{ \mathcal{G} }
\newcommand{\Nc}{ \mathcal{N} }
\newcommand{\vc}[2][r]{%
  \gdef\@VORNE{1}
  \left(\hskip-\arraycolsep%
  %\bigg(\hskip-\arraycolsep%
    \begin{array}{#1}\vekSp@lten{#2}\end{array}%
  \hskip-\arraycolsep\right)}
\def\vekSp@lten#1{\xvekSp@lten#1;vekL@stLine;}
\def\vekL@stLine{vekL@stLine}
\def\xvekSp@lten#1;{\def\temp{#1}%
  \ifx\temp\vekL@stLine
  \else
    \ifnum\@VORNE=1\gdef\@VORNE{0}
    \else\@arraycr\fi%
    #1%
    \expandafter\xvekSp@lten
  \fi}
\newcommand{\R}{\mathbb{R}}
\newcommand{\N}{\mathbb{N}}
\newcommand{\wrt}{\:\mathrm{d}}
\newcommand{\beq}{\begin{equation}}
\newcommand{\eeq}{\end{equation}}
\newcommand{\embed}{\hookrightarrow}
\newcommand{\Bc}{\mathcal{B}}
\newcommand{\Cc}{\mathcal{C}}
\newcommand{\Nt}{\mathcal{N}_\theta}
\newcommand{\Nta}{\mathcal{N}_{\theta,\alpha}}
\newcommand{\vtil}{{\widetilde{V}}}
\newcommand{\Ktil}{{\widetilde{K}}}
\newcommand{\qbar}{{\overline{q}}}
\newcommand{\ztil}{{\widetilde{z}}}
\newcommand{\zttil}{\accentset{\approx}{z}}
\newcommand{\uhddot}{{\ddot{u}^h}}
\newcommand{\uhdot}{{\dot{u}^h}}
\newcommand{\uzddot}{{\ddot{u}^z}}
\newcommand{\uzdot}{{\dot{u}^z}}
\newcommand{\WWs}{{\Wc,\Wc^*}}
\newcommand{\wws}{{W,W^*}}
\newcommand{\hhs}{{H,H^*}}
\newcommand{\vsv}{{V^*,V}}
\newcommand{\ltn}{{L^2(\Omega)}}
\newcommand{\ltlt}{{L^2(0,T;L^2(\Omega))}}
\newcommand{\ltv}{{L^2(0,T;V)}}
\newcommand{\hto}{{H^2(\Omega)\cap H^1_0(\Omega)}}
\newcommand{\htn}{{H^2(\Omega)}}
\newcommand{\tom}{((0,T)\times\Omega)}
\newcommand{\om}{(\Omega)}
\newcommand{\ti}{(0,T)}
\newcommand{\Dinv}{(-\Delta)^{-1}(-\Delta+\text{Id})^{-1}}
\newcommand{\compt}{\hookrightarrow\mathrel{\mspace{-15mu}}\rightarrow
}
\newcommand{\la}{\langle}
\newcommand{\ra}{\rangle}
\newcommand{\commA}[1]{\textcolor{black}{{#1}}}
\newcommand{\commB}[1]{\textcolor{black}{{#1}}}
\newcommand{\commE}[1]{\textcolor{black}{{#1}}}
\title{Learning-informed parameter identification in nonlinear time-dependent PDEs\footnote{CA and MH acknowledge funding by the Austrian Research Promotion Agency (FFG) (Project number 881561). }}
\author{
Christian Aarset\footnote{\href{mailto:c.aarset@math.uni-goettingen.de}{c.aarset@math.uni-goettingen.de} -- Revision work  was done in part at the University of Göttingen.}
\and 
Martin Holler\footnote{\href{mailto:martin.holler@uni-graz.at}{martin.holler@uni-graz.at}. MH is a member NAWI Graz (\url{https://www.nawigraz.at}) and BioTechMed Graz (\url{https://biotechmedgraz.at}).} 
\and 
Tram Thi Ngoc Nguyen \footnote{ \href{mailto:nguyen@mps.mpg.de}{nguyen@mps.mpg.de} -- Revision work was done in part at the Max-Planck Institute for Solar System Research with support of the DFG through grant 432680300 - SFB 1456.}
\and\\
Institute of Mathematics and Scientific Computing\\University of Graz, Austria
}
\date{}
\begin{document}
\maketitle
\begin{abstract}

\end{abstract}
We introduce and analyze a method of learning-informed parameter identification for partial differential equations (PDEs) in an all-at-once framework. The underlying PDE model is formulated in a rather general setting with three unknowns: physical parameter, state and nonlinearity. Inspired by advances in machine learning, we approximate the nonlinearity via a neural network, whose parameters are learned from measurement data. The later is assumed to be given as noisy observations of the unknown state, and both the state and the physical parameters are identified simultaneously with the parameters of the neural network. Moreover, diverging from the classical approach, the proposed all-at-once setting avoids constructing the parameter-to-state map by explicitly handling the state as additional variable. The practical feasibility of the proposed method is confirmed with experiments using two different algorithmic settings: A function-space algorithm based on analytic adjoints as well as a purely discretized setting using standard machine learning algorithms.\\

\noindent \emph{Keywords:} Machine learning, neural networks, parameter identification, nonlinearity, PDEs, Tikhonov regularization, all-at-once formulation.

%\textcolor{magenta}{references suggest by referees: \cite{18ChristofMeyerWaltherClason,20CuiHePang,22DongHintermuellerPapafitsorosVoelkner}}

\section{Introduction}\label{sec:intro}
%\commA{Restructured this entire section and moved subsection \emph{The learning problem} to section 2. The original version of Section 1 is in the commented text, feel free to uncomment everything between end of section 1 and beginning of section 2 to see the original version.}

We study the problem of determining an unknown nonlinearity $f$ from data in a parameter-dependent dynamical system
\begin{equation}\label{ori-eq-1}
\begin{aligned}
& \dot{u}=F(\lambda,u)+f(\alpha,u) \qquad &&\text{in } (0,T) \times \Omega\\
& u(0)=u_0 &&\text{on }\Omega .
\end{aligned}
\end{equation}
Here, the state $u$ is a function on a finite time interval $(0,T)$ and a bounded Lipschitz domain $\Omega$, \commA{and $\dot{u}$ denotes the first order time derivative}. 
In \eqref{ori-eq-1}, both $F, f$ are  nonlinear \commB{Nemytskii} operators in $\lambda, \alpha, u$\commB{; these Nemytskii operators are induced by  nonlinear, time-dependent functions 
$
[F(\lambda,u)](t): = F(t,\lambda,u(t))$ and $ [f (\alpha,u)](t,x): = f (\alpha,u(t,x)),
$
where we consistently abuse notation in this manner throughout the paper; see also Lemmas \ref{lem-nemytskii-F}, \ref{lem-nemytskii-N}}. %\todo{I suggest to remove the following sentence as it interrupts too much the (much more important) explanation in the surrounding text.}
We assume that $F$ was specified beforehand from an underlying physical model,
that the terms $\lambda$, $u_0$ are physical parameters (with $\lambda=\lambda(x)$ depending only on space), and that $\alpha  $ is a finite dimensional parameter arising in the nonlinearity. Furthermore, the model \eqref{ori-eq-1} is equipped with Dirichlet or Neumann boundary conditions.

Some examples of partial differential equations (PDEs) of the from \eqref{ori-eq-1} are diffusion models $\dot{u}=\Delta u + f(\alpha,u)$ with a nonlinear reaction term $f(\alpha,u)$ as follows \cite{Pazy}: %\commA{added dependence on $\alpha$}
\begin{itemize}
\item $f(\alpha,u)= -\alpha u(1 - u)$: \emph{Fisher equation} in heat and mass transfer, combustion theory.
\item $f(\alpha,u)= –\alpha u(1 - u)(\alpha - u), 0<\alpha<1$: \emph{Fitzhugh–Nagumo equation} in population genetics. %\commA{changed $b$ in the constraint $0<b<1$ to $\alpha$, ok? Yes, thanks.}
\item $f(\alpha,u)=-u/(1+\alpha_1 u+\alpha_2 u^2)$, $\alpha = (\alpha_1,\alpha_2)$, $ \alpha_1>0, \alpha_1^2<4\alpha_2 $: Enzyme kinetics.
\item $f(\alpha,u) = f(u) =-u|u|^p$, $ p\geq 1 $: Irreversible isothermal reaction, temperature in radiating bodies.
\end{itemize}
The underlying assumption of this work is that in some cases, the nonlinearity $f$ is unknown due to simplifications or inaccuracies in the modeling process or due to undiscovered physical laws. In such situations, our goal is to learn $f$ from data. In order to realize this in practice, we need to use a parametric representation. For this, we choose neural networks, which have become widely used in computer science and applied mathematics due to their excellent representation properties\commA{, see for instance \cite{Hornik} for the classical universal approximation theorem, \cite{lu2021deep} for recent results indicating superior approximation properties of neural networks with particular activations (potentially at the cost of stability) and \cite{devore2021neural,Kutyniok21math_deep_learning} for general, recent overviews on the topic.} Learning the nonlinearity $f$ thus reduces to identifying parameters $\theta$ of a neural network $\Nt$ such that $\Nt \approx f$, rendering the problem of learning a nonlinearity to be a parameter identification problem of a particular form. 

\commE{For the majority of this paper, the nonlinearity $f$ will therefore not appear directly; instead, $f$ will consistently be replaced by its neural network representation $\Nt$, and our focus will be on showing the properties of $\Nt$, rather than those of $f$.}

A main point in our approach, which is motivated from feasibility for applications, is that learning the nonlinearity must be achieved only via indirect, noisy measurements of the state $y^\delta \approx Mu$ with $M$ a \commA{linear} measurement operator. More precisely, we assume to have $K$ different measurements 
\begin{align}\label{ori-eq-2}
\commA{y^k = Mu^k \qquad k=1\commB{,\ldots,\,} K}
\end{align}
of different states $u^k$ available, where the different states correspond to solutions of the system \eqref{ori-eq-1} with different, unknown parameters $(\lambda^k,\alpha^k,u_0^k)$, but the same, unknown nonlinearity $f$ which is assumed to be part of the ground truth model.
%In this context, it is important to note that the parameters $\lambda^k,u^k_0,\alpha^k$ and the state $u^k$ differ between different measurements $k$, but the unknown nonlinearity $f$ is assumed to be part of the ground truth model and hence stays the same.
\commA{The simplest form of $M$ is a full observation over time and space of the states, i.e. $M=\text{Id}$ as in e.g. (theoretical) population genetics. In other contexts, $M$ could be discrete observations at time instances of $u$, i.e.  $Mu=\commB{(u(t_i,\cdot))_{i=1}^{n_T}}, t_i\in(0,T)$, as in material science \cite{Pedretscher}, system biology \cite{Boiger} (see also Corollary \ref{prop-adjoints-dis}), or Fourier transform as in MRI acquisition \cite{BenningEhrhardt}, etc. In most cases, $M$ is linear, as is assumed here.}

Our approach to address this problem is to use \emph{an all-at-once formulation} that avoids constructing the parameter-to-state map (see for instance \cite{kaltenbacher17}). That is, we aim to identify all unknowns by solving a minimization problem of the form
\begin{equation}\label{min-prob} 
\min_{\substack{
(\lambda^k,\alpha^k,u_0^k,u^k)_k \commB{\subset X \times \R^m \times U_0\times\Vc} \\
\theta \commB{\in \Theta}
}} \sum_{k=1}^K \| \Gc(\lambda^k,\alpha^k,u_0^k,u^k,\theta) - (0,0,y^k) \|^2_{\commB{{\Wc\times H}\times\Yc}} + \Rc_1(\lambda^k,\alpha^k,u_0^k,u^k)  +  \Rc_2(\theta),
\end{equation}
\commB{where we refer to Section \ref{sec:setting} for details on the function spaces involved.}
Here, $\Gc$ is a forward operator that incorporates the PDE model, the initial conditions and the measurement operator via
\begin{align*}
\Gc(\lambda,\alpha,u_0,u,\theta)=(\dot{u}-F(\lambda,u)-\Nc_\theta(\alpha,u),u(0)-u_0,Mu),
\end{align*}
and $\Rc_1$, $\Rc_2$ are suitable regularization functionals.

Once a particular parameter $\hat{\theta}$ such that $\mathcal{N}_{\hat{\theta}}$ accurately approximates $f$ in \eqref{ori-eq-1} is learned, one can use the learning informed model in other parameter identification problems by solving
\begin{equation} \label{par-id}
\min_{
(\lambda,\alpha,u_0, u)\commB{\in X \times \R^m \times U_0\times\Vc}
} \| \Gc(\lambda,\alpha,u_0,u,\hat{\theta}) - (0,0,y) \|^2_{\commB{{\Wc\times H}\times\Yc}} + \Rc_1(\lambda,\alpha,u_0,u)
\end{equation}
for a new measured datum $y \approx Mu$.

\textbf{Existing research towards learning PDEs and all-at-one identification.}
Exploring governing PDEs from data is an active topic in many areas of science and engineering. With advances in computational power and mathematical tools, there have been numerous recent studies on data-driven discovery of hidden physical laws. One novel technique is to construct a rich dictionary of possible functions, such as polynomials, derivatives etc., and to then use sparse regression to determine candidates that most accurately represent the data \cite{Schaeffer, Brunton,Rudy}. This sparse identification approach yields a completely explicit form of the differential equation, but requires an abundant library of basic functions specified beforehand. %Recently, with the development of the deep learning techniques, different neural network architectures are employed to learn the PDE model. \commA{Rassi} assumes underlying model is known except for a few scalar coefficients. These coefficients are learned jointly with the solution map approximated by a deep neural network. %This approach uses less data than the sparse regression approach, but the explicit form of the model must be known.
In this work, we take the viewpoint that PDEs are constructed from principal physical laws.
As it preserves the underlying equation and learns only some unknown components of the models, e.g. $f$ in \eqref{ori-eq-1}, our suggested approach is capable of refining approximate models by staying more faithful to the underlying physics.

Besides the machine learning part, the model itself may contain unknown physical parameters belonging to some function space. This means that if the nonlinearity $f$ is successfully learned, one can insert it into the model. One thus has a learning-informed PDE, and can then proceed via a classical parameter identification. The latter problem was studied in \cite{DongHintermuellerPapafitsoros20} for stationary PDEs, where $f$ is learned from training pairs $(u,f(u))$. This paper emphasizes analysis of the error propagating from the neural network-based approximation of $f$ to the parameter-to-state map and the reconstructed parameter.

%\todo{TN: first say that a second setting is considered in \cite{DongHintermuellerPapafitsoros20}, which uses training pairs $(u,f(u))$. Then say the following:}
In reality, one does not have direct access to the true state $u$, but only partial or coarse observations of $u$ under some noise contamination. This factor affects the creation of training data pairs $(u,f(u))$ with $f(u)=\dot{u}-F(u)$ for the process of learning $f$, e.g in \cite{DongHintermuellerPapafitsoros20}. Indeed, with a coarse measurement of $u$, for instance $u\in L^2((0,T)\times\Omega)$, one cannot evaluate $\dot{u}$, nor terms such as $\Delta u$ that may appear in $F(u)$. %To overcome this, one can apply some filter to smooth the data $y=u$. %, which can be understood as an inverse problem in order to better control the smoothing level.
Moreover, with discrete \commB{observations}, e.g. a snap\commA{shot} $y=\commB{(u(t_i,\cdot))_{i=1}^{n_T}}, t_i\in(0,T)$, one is unable to compute $\dot{u}$ for the training data. 

For this reason, we propose an all-at-once approach to identify the nonlinearity $f$, state $u$ and physical parameter simultaneously. In comparison to \cite{DongHintermuellerPapafitsoros20}, our approach bypasses the training process for $f$, and accounts for discrete data measurements. The all-at-once formulation avoids constructing the parameter-to-state map, which is nonlinear and often involves restrictive conditions \cite{HaAs01,aao16,kaltenbacher17,KKV14b,Nguyen}. Additionally, we here consider time-dependent PDE models.

%\todo{TN: mention also Kunisch work: recent preprint, independent of this work}
For discovering nonlinearities in evolutionary PDEs, the work in \cite{CourtKunisch} suggests an optimal control problem for nonlinearities expressed in terms of neural networks. Note that the unknown state still needs to be determined through a control-to-state map, i.e. via the classical reduced approach, as opposed to the new all-at-once approach.

While \cite{DongHintermuellerPapafitsoros20, CourtKunisch} are the recent publications that are most related to our work\commB{, we also mention the very recent preprint \cite{22DongHintermuellerPapafitsorosVoelkner} on an extension of \cite{DongHintermuellerPapafitsoros20} that appeared independently and after the original submission of our work. Furthermore,} 
there is a wealth of literature on the topic of deep learning emerging in the last decade; for an authoritative review on machine learning in the context of inverse problems, we refer to \cite{ArridgeMaassOektemSchoenlieb:19}. For the regularization analysis, we follow the well known theory put forth in \cite{Engl96_book_regularization_ip_mh,KalNeuSch08,Kirsch,Troeltzsch_optimal_control_pde_mh}.
\commB{It is worthwhile to note that since this work, to the knowledge of the authors, is the first attempt at applying an all-at-once approach to learning-informed PDEs, our focus will be on this novel concept itself, rather than on obtaining minimal regularity assumptions on the involved functions, in particular on the activation functions. In subsequent work, we might further improve upon this by considering, e.g., existing techniques from a classical optimal control setting with non-smooth equations \cite{18ChristofMeyerWaltherClason} or techniques to deal with non-smoothness in the context of training neural networks \cite{20CuiHePang}.}
%22DongHintermuellerPapafitsorosVoelkner

\textbf{Contributions.} Besides introducing the general setting of identifying nonlinearities in PDEs via indirect, parameter-dependent measurements, the main contributions of our work are as follows: Exploiting an all-at-once setting of handling both the state and the parameters explicitly as unknowns, we provide well-posedness results for the resulting learning- and  learning-informed parameter identification problems. This is achieved for rather general, \commA{nonlinear} PDEs and under local Lipschitz assumptions on the activation function of the involved neural network. Further, for the learning-informed parameter identification setting, we ensure the tangential cone condition on the neural-network part of our model. Together with suitable PDEs, this yields local uniqueness results as well as local convergence results of iterative solution methods for the parameter identification problem. We also provide a concrete application of our framework for parabolic problems, where we motivate our function-space setting by a unique existence result on the learning-informed PDE. Finally, we consider a case study in a Hilbert space setting, where we compute function-space derivatives of our objective functional to implement the Landweber method as solution algorithm. Using this algorithm, and also a parallel setting based on the ADAM algorithm \cite{kingma2014adam}, we provide numerical results that confirm feasibility of our approach in practice.

\textbf{Organization of the paper.} Section \ref{sec:setting} introduces learning-informed parameter identification and the abstract setting. Section \ref{sec-abstractPI} examines existence, stability and solution methods for the minimization problem. Section \ref{sec:application} focuses on the learning-informed PDE, and analyzes some problem settings. Finally, in Section \ref{sec:case_study} we present a complete case study, from setup to numerical results.

\section{Problem setting}\label{sec:setting}
%At first, we introduce some basic notation.

\subsection{Notation and basic assertions}
%We will use the following standard notation.
Throughout this work, $\Omega \subset \R^d$ will always be a bounded Lipschitz domain, where additional smoothness will be required and specified as necessary. 
%Further, we always let $p \in \R $ be such that $1\leq p<\infty$ and 
We use standard notations for spaces of continuous, integrable and Sobolev functions with values in Banach spaces, see for instance \cite{DiestelUhl_mh,Roubicek}\commA{, in particular \cite[Section 7.1]{Roubicek} for Sobolev-Bochner spaces and associated concepts such as time-derivatives of Banach-space valued functions.}. For an exponent $p \in [1,\infty]$, we denote by $p^*$ the conjugate exponent given as $p^* = p/(p-1)$ if $p \in (1,\infty)$, $p^* = \infty$ if $p=1$ and $p^* = 1$ if $p=\infty$. For $l \in \N$, we denote by
\[ W^{l,p}\om\embed L^q\om \] the continuous embedding of $W^{l,p}\om$ to $L^q\om$, which exists for $q\preceq \frac{dp}{d-lp}$, \commA{where the notation $\preceq$ means if $lp<d$, then $q\leq \frac{dp}{d-lp}$, if $lp=d$, then $q<\infty$, and if $lp\geq d$, then $ q=\infty $ }. \commB{An example of such an embedding, which will be used frequently in Section \ref{sec:application}, is $H^1(\Omega)\embed L^6(\Omega)$ for $d=3$.} We further denote by  $C_{W^{l,p}\to L^q}$ the operator norm of the corresponding continuous embedding operator.

%
%The continuous embedding $j:W^{k,p}\om\to L^q\om$ \cite[Theorem 1.20]{Roubicek}, \cite{Adams,Evans} satisfies
%\begin{align}\label{embedding}
%W^{k,p}\om\embed L^q\om \quad\text{if}\quad q\preceq \frac{dp}{d-kp} \quad\text{meaning}\quad
%\begin{cases}
%\leq \frac{dp}{d-kp} \quad &kp<d\\
%< +\infty \quad &kp=d\\
%=+\infty \quad &kp>d
%\end{cases},
%\end{align}
%where the notation $q\preceq \frac{a}{b}$ means $q\leq \frac{a}{b}$ with strict inequality if $b = 0$.
We also use $\compt$ to denote the compact embedding (see \cite[Theorem 1.21]{Roubicek})
\begin{align}
W^{l,p}\om\compt L^{q-\epsilon}\om, \qquad \text{for }\epsilon\in(0,q-1].
\end{align}
\commA{The notation $C$ indicates generic positive constants. \commB{Given any Banach spaces $X$, $Y$}, we denote by $\|\cdot\|_{\commB{X\to Y}}$ the operator norm $\|\cdot\|_{\commB{\mathcal{L}(X,Y)}}$, and by $\la\cdot,\cdot\ra_{\commB{X,X^*}}$ the pairing between dual spaces \commB{$X$, $X^*$}. We write $\C_\text{locLip}(\commB{X,Y})$ for the space of locally Lipschitz continuous functions between \commB{$X$} and \commB{$Y$}.
Furthermore, $A\cdot B$ denotes the Frobenius inner product between \commB{generic} matrices $A$, $B$, while $AB$ stands for matrix multiplication, and $A^T$ stands for the transpose of $A$. The notation $\Bc^X_\rho(x^\dagger)$ means a ball of center $x^\dagger$, radius $\rho>0$ in $X$. For functions mapping between Banach spaces, by the term \emph{weak continuity} we will always refer to weak-weak continuity, i.e., continuity w.r.t. weak convergence in both the domain and the image space.}

\subsection{The dynamical system}\label{sec:dynamicsystem}

For the general setting considered in this work, we use the following set of definitions and assumptions. A concrete application where these abstracts assumptions are satisfied can be found in Section \ref{sec:application} below.
\begin{ass} \label{ass:general_basic}\
\begin{itemize}
\item The space $X$ (parameter space) is a reflexive Banach space. The spaces $V$ (state space) and $W$ (image space under the model operator), $Y$ (observation space) and $\tilde{V} $ are separable, reflexive Banach spaces. In view of initial conditions, we further require $U_0 $ (initial data space) to be a reflexive Banach space, and $H$ to be a separable, reflexive Banach space.
%\\\commA{added separability.}

\item We assume the following embeddings: %\commA{moved this item further towards the end of the list in order to have items that are more related to understanding the general setting earlier. TN: I have to move it back here since due to the reviewer's comment (and it is true), we need $H\commA{\embed \tilde{V}}$ thus $V\embed \tilde{V}$ to absorb the $\|u(t)\|_{\tilde{V}}$ in $\|\cdot\|_\Vc$.}
\begin{equation} \label{eq:ass:static_space_embeddings}
U_0 \hookrightarrow H\hookrightarrow  W, \quad  V\embed H\commA{\embed \tilde{V}}, \quad V\embed Y, \quad V\compt L^\pfix(\Omega) \embed  W \text{ for some } \pfix\in [1,\infty).
 \end{equation} 
Further, $\tilde{V}$ will always be such that either  $L^\pfix(\Omega)\hookrightarrow \tilde{V}$ or  $\tilde{V} \hookrightarrow L^\pfix(\Omega)$.

\item The function 
\[ F:(0,T)\times X \times V\to W \]
is such that for any fixed parameter $\lambda \in X$, $F(\cdot,\lambda,\cdot) : (0,T) \times V \rightarrow W$ meets the Carath\'eodory conditions, i.e., $F(\cdot,\lambda,v)$ is  measurable with respect to $t$ for all $v\in V$ and $F(t,\lambda ,\cdot) $ is continuous with respect to $v$ for almost every $t\in (0,T)$. Moreover, \commA{for almost all $t\in(0,T)$ and all $\lambda\in X$, $v\in V$,} the growth condition 
\begin{equation} \label{growth}
\|F(t,\lambda,v)\|_W\leq \Bc(\|\lambda\|_X,\|v\|_H)(\gamma(t)+\|v\|_V) 
\end{equation}
is satisfied for some $\Bc:\R^2\to\R$ such that $b \mapsto \Bc(a,b)$ is increasing for each $a \in \R$, and $\gamma\in L^2(0,T)$. %\commB{These assumptions are prerequisite for $F$ to induce the well-defined Nemytskii operator on overall spaces (c.f. Lemma \ref{lem-nemytskii-F}; recall the abuse of notation introduced following \eqref{ori-eq-1}).}
 %\todo{MH: need already here that $V \subset H$. Suggestions: define $U_0$ and $H$ here, also assume all non-time-space embeddings here and the time-space embeddings below}.
%\item For \todo{remove from general assumption?}
%\[f:\R^m \times \R \to  \R \]
%it holds that $f\in \Cc (\R^m \times \R,\R)$. 

\item  We define the overall state space and image space including time dependence  as
\begin{align}\label{space-UW}
\Vc=L^2(0,T;V)\cap H^1(0,T;\vtil), \quad \Wc=L^2(0,T;W),
\end{align}
respectively with the norms $\|u\|_\Vc:=\sqrt{\int_0^T \|u(t)\|_V^2+\|
\dot{u}(t)\|_\vtil^2\wrt t}$ and $\|u\|_\Wc:=\sqrt{\int_0^T \|u(t)\|_W^2 \wrt t}$.
\item We define the overall observation space including time as
\[ \Yc=L^2(0,T;Y),\]
with the norm $\|y\|_\Wc:=\sqrt{\int_0^T \|y(t)\|_Y^2 \wrt t}$
and the corresponding measurement operator 
\begin{align}\label{M}
 M \in \Lc(\Vc,\Yc).
\end{align}
%\commB{The overall functions spaces form the domain and image space of the Nemytskii operators.}
\item We further assume the following embeddings for the state space:
\begin{align*}
 \Vc\hookrightarrow L^\infty\tom,  \quad   \Vc \hookrightarrow C(0,T;H).
\end{align*}
\end{itemize}
\end{ass}

%\commA{Skip this Remark to shorten the manuscript.}
%\gray{
%\begin{rem}  
%\begin{itemize}
%\item
%By \cite[Proposition 1.38]{Roubicek}, separability and separability of $W$, we have  that $\Wc^*=(L^2(0,T;W))^*=L^2(0,T;W^*)$, which will be used in several estimates. A more detailed discussion about Bochner spaces could be found in, e.g. \cite[Section 1.5]{Roubicek}. 
%\item The embedding $\Vc \hookrightarrow C(0,T;H)$ is required for the initial condition $u(0) = u_0$, where $u_0 \in U_0$ with $U_0$ a set of admissible (potentially smooth) initial data.
%\item 
%Note that above, the assumption $V\hookrightarrow Y$ makes sense in particular in view of the case $M=\text{Id}$ since $\|Mu(t)\|_Y=\|u(t)\|_Y \leq C_{V\to Y}\|u(t)\|_V$, which is one typical case in practice of $M\in\Lc(V,Y)$
%\end{itemize}
%\end{rem}
%}
The embeddings in \eqref{eq:ass:static_space_embeddings} are very feasible in the context of PDEs.  The state space $V$ usually has some certain smoothness such that its image under some spatial differential operators belongs to $W$. For the motivation of $\Vc\hookrightarrow C(0,T;H)$, the abstract setting in \cite[Lemma 7.3.]{Roubicek} \commA{(see Appendix \ref{appendix-Roubicek})} is an example. \commA{Note that due to $\Vc \hookrightarrow C(0,T;H)$, clearly $U_0=H$ is a feasible choice for the initial space; for the sake of generality, only $U_0\embed H$ is assumed in \eqref{eq:ass:static_space_embeddings}.}

Under Assumption \ref{ass:general_basic}, the function $F$ induces a Nemytskii operator on the overall spaces.
\begin{lem}\label{lem-nemytskii-F}
Let Assumption \ref{ass:general_basic} hold. Then the function $F:(0,T)\times X \times V\to W $ induces a well-defined Nemytskii operator $F: X \times \Vc \rightarrow \Wc$ given as
\begin{equation}\label{Nemytskii}
[F(\lambda,u)](t) = F(t,\lambda,u(t)).
\end{equation}
\begin{proof}
%\todo{add proof by possibly referring to later lemmata and references:  \cite[Theorem 1.43]{Roubicek}, \cite[Section 1.3]{Roubicek}}
Under the Carath\'eodory assumption, $t \mapsto F(t,\lambda,u(t))$ is Bochner \commA{measurable} for every $\lambda \in X$ and $u \in \Vc$. For such $\lambda,u$, we further estimate
\begin{align*}
 \int_0^T \|F(t,\lambda,u(t) \|^2_W \commB{\wrt t}
 & \leq 2 \int_0^T \Bc(\|\lambda\|_X,\|\commA{u}(t)\|_H)^2(\gamma(t)^2+\|\commA{u}(t)\|^2_V) \commB{\wrt t}\\
 & \leq 2\Bc(\|\lambda\|_X,\|\commA{u}\|_{C(0,T;H)})^2( \|\gamma\|^2_{L^2(0,T)} + \|u\|_{\Vc}^2 ) < \infty
\end{align*}
by $b \mapsto \Bc(\|\lambda\|,b)$ being increasing and by the embedding $ \Vc \hookrightarrow C(0,T;H)$. This allows to conclude that $t \mapsto F(t,\lambda,u(t))$ is Bochner integrable (see \cite[Theorem II.2.2]{DiestelUhl_mh}) and that the Nemytskii operator $F: X \times \Vc \rightarrow \Wc$ is well-defined. 
%The pointwise function $f\in \Cc (\R^m \times \R,\R)$ obviously is a Carath\'eodory mapping, thus the mapping $(t,x) \mapsto f(\alpha,u(t,x))$ is measureable for every $\alpha \in \R^m$ and $u \in \Vc$.
% thus inducing the Nemytskii operator $f: \R^m\times \Vc \rightarrow \Wc$ \cite[Section 1.3]{Roubicek}. For $(\alpha, u)\in\R^m\times \Vc, \|(\alpha,u)\|_{L^\infty\tom}\leq M$ for some $M\geq0$ as $\Vc\embed L^\infty\tom$. Hence, with $f\in \Cc (\R^m \times \R,\R)$, $\|f(\alpha,u)\|_{L^\infty\tom}\leq N$ for some $N\geq 0$, which yields that $f(\alpha,u)\in\Wc$ if $L^p\om\embed W$. This shows well-definedness of the Nemytskii mapping $f$.
%Its well-definedness follows by the same logic as \eqref{welldefined-N}, with $f$ in place of $\Nc$, and $\theta$ is not present. There, the embeddings $\Vc\embed L^\infty\tom$ and $L^p\om\embed W$ are employed.
\end{proof}
\end{lem}
Note that we use the same notation for the function $F:(0,T) \times X \times V \rightarrow W$ and the corresponding Nemytskii operators.

%\todo{TN: maybe list concrete example already here?}
%\commA{flipped sign of NN to match others}
%An application that we will consider later is the system
%\begin{alignat}{3}\label{app}
%& \dot{u} - \nabla\cdot(a\nabla u) + cu - \Nc(u,\theta) = \varphi \quad&&\mbox{ in }\Omega\times\ti, \nonumber\\
%& u|_{\partial\Omega}=0 && \mbox{ in } \ti,\\
%& u(0) = u_0 &&\mbox{ in }\Omega, \nonumber
%\end{alignat}
%where $(a,c, \varphi,u_0), u$ and $\theta$ are to be determined.

\subsection{Basics of neural networks}\label{sec:CNN}
As outlined in the introduction, the unknown nonlinearity $f$ will be represented by a neural network. In this work, we use a rather standard, feed-forward form of neural networks defined as follows.

%\note{simplified architecture}
\begin{defn} \label{def:neural_network} A neural network $\Nc_\theta$ of depth $L \in \N$ with architecture $(n_i)_{i=0}^{L}$ is a function $\Nc_\theta:\R^{n_0 } \rightarrow \R^{n_L }$ of the form
\[ \Nc_\theta (x) = L_{\theta_L}\circ \ldots \circ L_{\theta_1}(x)\] where
$L_{\theta_l}:\R^{n_{l-1} } \rightarrow \R^{n_{l}} $, for $z \in \R^{n_{l-1} }$ is given as 
\[ L_{\theta_l}(z) :=   \sigma ( \weight^l z + \bias^l ) \text{ for $l=1,\ldots,L-1$}, \qquad L_{\theta_L}(z) :=   \weight^L z + \bias^L.\]
Here, $\weight^l \in \Lc( \R^{n_{l-1}},\R^{n_{l}})$, $\bias^l \in \R^{n_{l}}$, $\theta_l = (\weight^l,\bias^l)$ summarizes all the parameters of the $l$-th layer and $\sigma$ is a pointwise nonlinearity that is fixed. Given a depth $L \in \N$ and architecture $(n_i)_{i=0}^{L}$, we also use $\Theta $ to denote the finite dimensional vector space containing all possible parameters $\theta _1,\ldots,\theta_L$ of neural networks with this architecture. 
\end{defn}
%\begin{defn} \label{def:neural_network} A neural network $\Nc_\theta$ of depth $L \in \N$ with architecture $((n_i,k_i))_{i=0}^{L}$ is a function $\Nc_\theta:\R^{n_0 \times k_0} \rightarrow \R^{n_L \times k_L}$ of the form
%\[ \Nc_\theta (x) = L_{\theta_L}\circ \ldots \circ L_{\theta_1}(x)\] where
%$L_{\theta_l}:\R^{n_{l-1} \times k_{l-1}} \mapsto \R^{n_{l} \times k_{l}} $, for $z \in \R^{n_{l-1} \times k_{l-1}}$, is given, for $l=1,\ldots,L-1$, as 
%\[ L_{\theta_l}(z)_{\cdot ,j} =  \sum_{s=1}^{k_{l-1}} \sigma ( \weight_{s,j}^l z_{\cdot,s} + \bias_{s,j}^l ) \quad \text{ for }j = 1,\ldots,k_{l}\]
%and 
%\[  L_{\theta_L}(z)_{\cdot ,j} =  \sum_{s=1}^{k_{L-1}}  \weight_{s,j}^L z_{\cdot,s} + \bias_{s,j}^L  \quad \text{ for }j = 1,\ldots,k_{L}.\]
%Here, $\weight^l_{s,j} \in \Lc( \R^{n_{l-1}},\R^{n_{l}})$ and $\bias_{s,j}^l \in \R^{n_{l}}$ and $\theta_l = (\weight^l_{s,j},\bias^l_{s,j})_{s,j}$ summarizes all the parameters of the $l$-th layer and $\sigma$ is a pointwise nonlinearity that is fixed. Given a depth $L \in \N$ and architecture $((n_i,k_i))_{i=0}^{L}$, we also use $\Theta $ to denote the finite dimensional vector space containing all possible parameters $\theta _1,\ldots,\theta_N$ of neural networks with this architecture. 
%\end{defn}
In this work, neural networks will be used to approximate the nonlinearity $f:\R^{m+1} \rightarrow \R$. Consequently, we always deal with neural networks $\Nt:\R^{m+1} \rightarrow \R$, i.e., $n_0=m+1$ and $n_L = 1$.

\commB{As such, rather than showing that $f$ induces a well-defined Nemytskii operator, we instead show that $\Nt$ does so. A sufficient condition for this to be true is the continuity of the activation function $\sigma$, as the following Lemma shows.}
 
%\commB{As with $F$ in Lemma \ref{lem-nemytskii-F}, one needs to ensure well-posedness of the Nemytskii operator induced by the unknown nonlinearity $f$. Since the neural network $\Nt$ is incorporated to represent $f$ in our approach, this means imposing neccessary conditions on $\Nt$, such as the Carath\'eodory condition, together with suitable function space embeddings (see Lemma \ref{lem-nemytskii-N}}). \commA{The Carath\'eodory condition on $\Nt$ means that $\Nt$ is continuous with respect to $(\alpha,z)\in \R^m\times\R$; this requires an assumption on the continuity of activation function.}

% \commA{.
%\begin{ass}\label{ass:general_NN}
%The network $\Nt$
%\[\Nt:\R^m\times\R\to\R\]
%has continuous activation $\sigma \in \Cc(\R,\R)$. 
%\end{ass}}

\begin{lem}\label{lem-nemytskii-N}
%Let Assumption \ref{ass:general_NN} hold.
Assume that $\sigma \in \Cc(\R,\R)$. Then, with the setting of Assumption \ref{ass:general_basic}, $\Nc_\theta:\R^{m} \times \R \rightarrow \R$ as in Definition \ref{def:neural_network} induces a well-defined Nemytskii operator $\Nc_\theta: \R^m \times \Vc \rightarrow  L^2(0,T;L^{\commA{\pfix}}(\Omega)) $ via
\[ [\Nc_\theta (\alpha,u)](t)(x) = \Nc_\theta (\alpha,u(t,x)) ,\]
regarding $u \in \Vc$ as $u \in  L^\infty\tom$ by the embedding $ \Vc\hookrightarrow L^\infty\tom$. Further, using the embedding $L^2(0,T;L^{\commA{\pfix}}(\Omega)) \embed  \Wc$, $\Nc_\theta$ induces a well-defined Nemytskii operator $\Nc_\theta:\R^m \times \Vc \rightarrow \Wc$.
\begin{proof}
We first fix $\alpha \in \R^m$. By continuity of $\sigma$, $\Nc_\theta$ is also continuous and, for  $u \in L^\infty((0,T) \times \Omega)$, $\sup_{t,x} |\Nc_\theta(\alpha,u(t,x))| < \infty$; thus, $\Nc_\theta(\alpha,u(t,\cdot)) \in L^\pfix(\Omega)$ for \commB{almost} every $t \in (0,T)$. It then follows by standard measurability arguments that the mapping $t \mapsto \int_\Omega \Nc_\theta(\alpha,u(t,x))w^*(x)\wrt x$ is measurable for every $w^* \in L^{\pfix^*}(\Omega)$. Using separability and the Pettis theorem \cite[Theorem II.1.2]{DiestelUhl_mh}, it follows that $t \mapsto \Nc_\theta(\alpha,u(t,\cdot)) \in L^\pfix(\Omega)$ is Bochner measureable. This, together with  $\sup_{t,x} |\Nc_\theta(\alpha,u(t,x))| < \infty$ as before, implies that the Nemytskii operator  $\Nc_\theta:\R^m \times \Vc \rightarrow L^2(0,T;L^\pfix(\Omega)) $ is well defined. The remaining assertions follow immediately from $L^\pfix(\Omega) \embed W$.
\end{proof}
\end{lem}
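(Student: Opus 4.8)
The plan is to fix the parameter $\alpha \in \R^m$ and verify, for the candidate function $t \mapsto \Nc_\theta(\alpha,u(t,\cdot))$, the three properties needed to land in $L^2(0,T;L^\pfix\om)$: pointwise membership in $L^\pfix\om$ for a.e.\ $t$, strong (Bochner) measurability in $t$, and finiteness of the $L^2$-in-time norm. The two ingredients that drive everything are the continuity of $\sigma$, which makes $\Nc_\theta(\alpha,\cdot):\R\to\R$ continuous as a composition of affine layer maps with the pointwise nonlinearity, and the embedding $\Vc\embed L^\infty\tom$ from Assumption~\ref{ass:general_basic}, which lets me treat $u$ as an essentially bounded function.

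First I would fix $u\in\Vc$ and set $C:=\|u\|_{L^\infty\tom}$. Since $\Nc_\theta(\alpha,\cdot)$ is continuous it is bounded on the compact interval $[-C,C]$, so $M:=\sup_{t,x}|\Nc_\theta(\alpha,u(t,x))|<\infty$. Because $\Omega$ is a bounded domain and hence has finite measure, this uniform bound immediately gives $\Nc_\theta(\alpha,u(t,\cdot))\in L^\pfix\om$ for a.e.\ $t$, with the uniform-in-$t$ estimate $\|\Nc_\theta(\alpha,u(t,\cdot))\|_{L^\pfix\om}\le |\Omega|^{1/\pfix}M$.

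The main technical step, which I expect to be the crux, is the strong measurability of $t\mapsto\Nc_\theta(\alpha,u(t,\cdot))$ as a map into $L^\pfix\om$. Since $L^\pfix\om$ is separable for $\pfix\in[1,\infty)$, I would invoke the Pettis measurability theorem and reduce strong measurability to weak measurability: for every $w^*\in L^{\pfix^*}\om$ the scalar map $t\mapsto\int_\Omega\Nc_\theta(\alpha,u(t,x))w^*(x)\wrt x$ must be measurable. This follows from joint measurability of $(t,x)\mapsto\Nc_\theta(\alpha,u(t,x))$ — a composition of the continuous $\Nc_\theta(\alpha,\cdot)$ with a jointly measurable representative of $u$ — together with Fubini's theorem applied to the bounded, jointly measurable integrand paired against $w^*$. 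Once this is in place, the remaining items are routine.

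Finally, combining the uniform norm bound with the finite time horizon yields $\int_0^T\|\Nc_\theta(\alpha,u(t,\cdot))\|_{L^\pfix\om}^2\wrt t\le T|\Omega|^{2/\pfix}M^2<\infty$, so $\Nc_\theta(\alpha,u)\in L^2(0,T;L^\pfix\om)$ and the first Nemytskii operator is well-defined. For the second assertion I would simply post-compose with the continuous embedding $L^\pfix\om\embed W$ supplied by Assumption~\ref{ass:general_basic}, which lifts to $L^2(0,T;L^\pfix\om)\embed L^2(0,T;W)=\Wc$, giving the well-defined operator $\Nc_\theta:\R^m\times\Vc\to\Wc$.
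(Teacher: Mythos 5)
Your proposal is correct and follows essentially the same route as the paper's proof: the uniform bound $\sup_{t,x}|\Nc_\theta(\alpha,u(t,x))|<\infty$ from continuity of $\sigma$ and $\Vc\embed L^\infty\tom$, Bochner measurability via separability of $L^\pfix\om$ and the Pettis theorem applied to the weakly measurable pairings $t\mapsto\int_\Omega\Nc_\theta(\alpha,u(t,x))w^*(x)\wrt x$, and post-composition with $L^\pfix\om\embed W$ for the second assertion. You merely make explicit some details the paper leaves as ``standard measurability arguments'' (joint measurability of a representative of $u$ plus Fubini) and spell out the constants in the $L^2$-in-time bound.
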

We again use the same notation for $\Nc_\theta:\R^{m} \times \R \rightarrow \R$ and the corresponding Nemytskii operator.

\subsection{The learning problem}\label{sec:learning-problem}
%In order to define the minimization problem associated to learning an approximation of the nonlinearity $f$ via $\Nc_\theta$, we represent the PDE via the function
As the nonlinearity $f$ is represented by a neural network $\Nt:\R^{m+1} \rightarrow \R$, we rewrite the partial-differential-equation (PDE) model  \eqref{ori-eq-1} into the form
\begin{align}\label{nn-eq-2}
e:X\times \R^m  \times U_0  \times \Vc \times \Theta \to\Wc\times H, \quad  e(\lambda,\alpha,u_0,u,\theta)=(\dot{u}-F(\lambda,u)-\Nc_\theta(\alpha,u),u(0)-u_0),
\end{align}
and introduce the forward operator $\Gc$, which incorporates the observation operator $M$, as
\begin{align}
\begin{split}\label{eq:forward_operator_general}
\Gc:X \times \R^m \times U_0 \times\Vc \times \Theta\to\Wc\times H \times \Yc, \\
\Gc(\lambda,\alpha,u_0,u,\theta)=(e(\lambda,\alpha,u_0,u,\theta),Mu).%=(0,0,y).
\end{split}
\end{align}
Here, $U_0$ and $H$ are \commA{the} spaces related to the initial condition \commA{and the trace operator, that is, one has unknown initial data $u_0\in U_0$ and trace operator $(\cdot)_{t=0}: \Vc\ni u\mapsto u(0)\in H$. With $U_0\embed H$ as assumed in \eqref{eq:ass:static_space_embeddings}, one has $u(0)-u_0\in H$}. %\commA{Remove: We refer to Section \ref{sec:setting} for details on the involved spaces and functionals.}

The minimization problem for the learning process is then given by
\begin{equation} \label{eq:main_identification_aao_setting}
\min_{\substack{
(\lambda^k,\alpha^k,u_0^k,u^k)_k \subset X \times \R^m \times U_0\times\Vc \\
\theta \in \Theta
}} \sum_{k=1}^K \| \Gc(\lambda^k,\alpha^k,u_0^k,u^k,\theta) - (0,0,y^k) \|^2_{\Wc\times H\times\Yc} + \Rc_1(\lambda^k,\alpha^k,u_0^k,u^k)  +  \Rc_2(\theta),
\end{equation}
where $\Rc_1:X \times \R^m \times U_0 \times \Vc \rightarrow [0,\infty]$ and $\Rc_2:\Theta \rightarrow [0,\infty]$ are suitable regularization functionals.

\commB{Assume now that the particular parameter $\hat{\theta}$ has been learned. As in \eqref{par-id}, one can now solve other parameter identification problems, given new measured datum $y\approx Mu$}, by solving

\begin{equation} \label{eq:standard_pid_aao_setting}
\min_{
(\lambda,\alpha,u_0, u)\in X \times \R^m \times U_0\times \Vc
} \| \Gc(\lambda,\alpha,u_0,u,\hat{\theta}) - (0,0,y) \|^2_{\Wc\times H\times\Yc} + \Rc_1(\lambda,\alpha,u_0,u).
\end{equation}

\section{Learning-informed parameter identification}\label{sec-abstractPI}

\subsection{Well-posedness of minimization problems}
We start our analysis by studying existence theory for the optimization problems \eqref{eq:main_identification_aao_setting} and \eqref{eq:standard_pid_aao_setting}, where the unknown nonlinearity is replaced by a neural network approximation. To this aim, we first establish weak closedness of the forward operator. \commA{In what follows, the architecture of the network $\Nc$ is considered fixed.}

%\commA{weakened $\Cc^1$ to $\C_\text{locLip}(\R,\R)$ to include, e.g. RELU activation.}
\begin{lem} \label{lem:closedness_forward_operator}
Let Assumption \ref{ass:general_basic} hold. Then, if $\sigma \in \C_\text{locLip}(\R,\R)$,  $\Nc:\R^m \times \Vc \times \Theta \rightarrow \Wc$ is weakly continuous. Further, if
 \textbf{either} 
 \begin{equation} \label{eq:existence_f_weak_cont_ass}
 F(t,\cdot):X \times H \rightarrow W \text{ is weakly continuous for a.e. }t \in (0,T)
\footnote{\commA{This somewhat abusive notation in particular implies that for this specific case distinction to hold, $F$ in \eqref{ori-eq-1} must have been well defined for $u\in H$, with Assumption \ref{ass:general_basic} holding for the restriction of $F(t,\cdot):X \times H \rightarrow W$ to $X\times V$.}}
\end{equation}
\textbf{or} 
\begin{equation}  \label{pseudo_ass}
V\compt H, \quad  H\hookrightarrow W^*,
\end{equation}
\commA{and $(-F)$ is pseudomonotone in the sense that} for almost all $t\in(0,T),$
\begin{align}\label{pseudo_def}	
\left.\begin{aligned}
 (u_k(t),\lambda_k) \overset{H\times X}{\rightharpoonup} (u,\lambda)&\\
\underset{k\rightarrow\infty}{\liminf} \la F(t,\lambda_k,u_k(t)),u_k(t)-u(t)\ra_\wws \geq 0&
\end{aligned}\right\}
\Rightarrow
\begin{cases}
\forall v \in W^*: \la F(t,\lambda,u(t)),u(t)-v \ra_\wws \\ \geq \underset{k\rightarrow\infty}{\limsup}\la F(t,\lambda_k,u_k(t)),u_k(t)-v \ra_\wws,
\end{cases}
\end{align} 
then $F$ is weakly closed. Moreover, if $\Nc$ is weakly continuous and $F$ is weakly closed, then $\Gc$ as in \eqref{eq:forward_operator_general} is weakly closed.

%if $\sigma \in \C_\text{locLip}(\R,\R)$ and either \eqref{eq:existence_f_weak_cont_ass} or \eqref{pseudo_ass} holds, then the operator $\Gc$ as in \eqref{eq:forward_operator_general} is weakly closed as mapping from $X \times \R^m \times U_0 \times\Vc \times \Theta$ to $\Wc\times H \times \Yc$ in the sense that $x_n\rightharpoonup x$ and $ \Gc(x_n)\rightharpoonup g$ implies $ \Gc(x) = g$.
\begin{proof}
We first consider weak closedness of $\Gc$. To this aim, recall that $\Gc$ is given as
 \[ \Gc(\lambda,\alpha,u_0,u,\theta)=(\dot{u}-F(\lambda,u)-\Nc_\theta(\alpha,u),u(0)-u_0,Mu). \]
First note that $ M\in \Lc(\Vc,\Yc)$ by \commB{\eqref{M}}. Weak closedness of $((\cdot)_{t=0},\text{Id}):\Vc \times U_0 \rightarrow H$ follows from weak continuity of $\text{Id}:U_0 \rightarrow H$ as $U_0 \hookrightarrow H$, and from weak-weak continuity of $(\cdot)_{t=0}:\Vc \rightarrow H$ which follows from $\|u(0)\|_H \leq \sup_{t\in [0,T]}\|u(t)\|_H \leq C\|u\|_\Vc$ for $C>0$ and $\Vc\hookrightarrow C(0,T;H)$. Weak continuity of $\frac{d}{dt}:\Vc \rightarrow \Wc$ results from the choice of norms in the respective spaces. Thus, weak closedness of $\Gc$ follows when $F$ is weakly closed and $\Nc$ is weakly continuous.

\textbf{Weak continuity of $\Nc$.}
First, we observe that $\Nc:  \R^m \times \R \times \Theta  \to  \R, (\alpha,y,\theta) \mapsto \Nc_\theta(\alpha,y)$ is in $\C_\text{locLip}(\Theta\times\R^m \times \R,\R)$, since the activation function $\sigma $ is locally Lipschitz continuous. 
For a sequence $(\alpha_n,u_n,\theta_n)_n$ converging weakly to $(\alpha,u,\theta)$ in $\R^m \times \Vc \times \Theta$, we observe that by the embedding $\Vc\hookrightarrow L^\infty\tom$, $\sup_{t,x} \| (\alpha_n,u_n(t,x),\theta_n)\|< M$ for some $M>0$. 
%This particularly indicates that $\Nc_\theta(\alpha)$ satisfies the Carath\'eodory conditions and thus induces the corresponding Nemytskii operator $\Nta:\Uc\to\Wc.$\todo{details on nemytskii: why a mapping between correct function spaces. Also, maybe jointly estimate continuity in $\alpha,y$ and $\theta$}

Now the embeddings  $V\compt L^\pfix(\Omega)\hookrightarrow W$ %\todo{TN: need $$?}
imply in particular that $\Vc\compt L^2(0,T;L^\pfix(\Omega))$ 
(in case $\widetilde{V}\hookrightarrow L^\pfix\om$, this follows from $\Vc\subset L^2(0,T;V)\cap H^1(0,T;L^\pfix(\Omega))\compt L^2(0,T;L^\pfix(\Omega))$ together with \cite[Lemma 7.7]{Roubicek} \commA{(see Appendix \ref{appendix-Roubicek})}, in the other case that $L^\pfix \om \hookrightarrow \widetilde{V}$, this follows directly from \cite[Lemma 7.7]{Roubicek}). Based on this, we deduce $u_n\to u$ in $L^2(0,T;L^\pfix(\Omega))$.
Then
\begin{align}\label{welldefined-N}
&\|\Nc(\alpha_n,u_n,\theta_n)-\Nc(\alpha,u,\theta)\|_\Wc
 =\underset{ \substack{ w^* \in \Wc^*, \\ \|w^*\|_{\Wc^*}\leq1}}{\sup} \la \Nc(\alpha_n,u_n,\theta_n)-\Nc(\alpha,u,\theta),w^* \ra_\WWs \nonumber\\
&\quad =\underset{ \substack{ w^* \in \Wc^*, \\ \|w^*\|_{\Wc^*}\leq1}}{\sup} \int_0^T \int _ \Omega \left(\Nc(\alpha_n,u_n(t,x),\theta_n)-\Nc(\alpha,u(t,x),\theta)\right)w^*(t,x)  \wrt x\wrt t \nonumber\\
&\quad\leq L(M) \underset{\substack{ w^* \in \Wc^*, \\ \|w^*\|_{\Wc^*}\leq1}}{\sup}\int_0^T  \int_\Omega  \left( |\alpha_n - \alpha| + |u_n(t,x)-u(t,x)|  + |\theta_n - \theta| \right)|w^*(t,x)|  \wrt x\wrt t \nonumber\\
&\quad\leq C L(M)  \underset{\substack{ w^* \in \Wc^*, \\ \|w^*\|_{\Wc^*}\leq1}}{\sup} \left( \|u_n-u\|_{L^2(0,T;L^\pfix(\Omega))} + |\alpha_n - \alpha| + |\theta_n-\theta| \right) \|w^*\|_{L^2(0,T;L^{\frac{\pfix}{\pfix-1}}(\Omega))} \nonumber\\
 &\quad\leq C L(M)   \left( \|u_n-u\|_{L^2(0,T;L^\pfix(\Omega))} + |\alpha_n - \alpha| + |\theta_n-\theta| \right) \overset{n\to\infty}{\to}0
\end{align}
as $W^*\hookrightarrow L^{\frac{\pfix}{\pfix-1}}(\Omega)$, $u_n\overset{n\to\infty}{\to}u$ in $L^2(0,T;L^\pfix(\Omega))$, \commB {and $\Nc(\alpha,u_n(t,\cdot),\theta_n)) \in L^\pfix(\Omega)$, as argued in the proof of Lemma  \ref{lem-nemytskii-N}}. Above $L(M)$ denotes the Lipschitz constant of $(\alpha,y,\theta) \mapsto \Nc_\theta(\alpha,y)$ in the ball with radius $M$ and $\pfix/(\pfix-1) = \infty$ in case $\pfix=1$. \commA{This shows that here, we even obtain weak-strong continuity of $\Nt$, which is stronger than weak-weak continuity, as required.}

\textbf{Weak closedness of $F$.} To show weak closedness of the Nemytskii operator $F:X \times \Vc\to\Wc$, \commB{we consider two cases}. We first consider the case that $F(t,\cdot)$ is weakly continuous. To this aim, take $(\lambda_n,u_n)_n$ to be a sequence weakly converging to $(\lambda,u)$ in $X  \times \Vc$.
As $\Vc\hookrightarrow C(0,T;H)$, we have $u_n\overset{C(0,T;H)}{\rightharpoonup}u$ as $n \rightarrow \infty$. \commA{Now, we show $u_n(t)\overset{H}{\rightharpoonup}u(t)$ for all $t\in(0.T)$ via the fact that the point-wise evaluation function $(\cdot)(t):\Vc\to H$ for any $ t\in[0,T]$ is linear and bounded, thus weak-weak continuous}. Indeed, its linearity is clear and boundedness follows from 
\[ \la(\tilde u)(t),h^*\ra_\hhs\leq \underset{ \tilde t\in[0,T]}{\max}\la(\tilde u)(\tilde t),h^*\ra_\hhs \leq \|\tilde u\|_{C(0,T;H)}\|h^*\|_H\leq C\|\tilde u\|_\Vc\|h^*\|_H.\]
From this, we obtain \commA{$u_n(t)\overset{H}{\rightharpoonup}u(t)$}, thus having $ (u_n(t),\lambda_n)\overset{H\times X}{\rightharpoonup}(u(t),\lambda))$ for all $t \in (0,T).$ Using the growth condition \eqref{growth}, we now estimate
\begin{alignat}{3} \label{welldefined-F}
&\la F(\lambda_n,u_n)-F(\lambda,u),w^* \ra_\WWs = \int_0^T  \la F(\lambda_n,u_n)(t)-F(\lambda,u)(t),w^*(t) \ra_\wws  \wrt t =: \int_0^T \epsilon_n(t)\wrt t \nonumber\\
& \leq  \int_0^T(\|F(\lambda_n,u_n)(t)\|_W+\|F(\lambda,u)(t)\|_W)\|w^*(t)\|_{W^*}\wrt t \nonumber\\
& \leq \left(\Bc(\|\lambda_n\|_X,\sup_t \|u_n(t)\|_H)(\|\gamma\|_{L^2(0,T)}  +\|u_n\|_\Vc) + \Bc(\|\lambda\|_X,\sup_t \|u(t)\|_H)(\|\gamma\|_{L^2(0,T)}+\|u\|_\Vc)\right)\|w^*\|_{\Wc^*}\nonumber\\& \leq C(\|\lambda\|_X,\|u\|_\Vc)\|w^*\|_{\Wc^*},
\end{alignat}
where $C(\|\lambda\|_X,\|u\|_\Vc)>0$ can be obtained independently from $n$ due to $\Vc\hookrightarrow C(0,T;H)$, $\Bc$ being increasing, and boundedness of $((u_n,\lambda_n))_n $ in $\Vc \times X$. 
Since F is assumed to be weakly continuous on $H\times X$, when $n\to\infty$ we have $\epsilon_n(t)\to0$ pointwise in $t$. Hence, applying Lebesgue’s Dominated Convergence Theorem yields convergence of the time integral to $0$, thus weak convergence of $F(\lambda_n,u_n)$ to $F(\lambda,u)$ in $\Wc$ as claimed. \commB{Accordingly, if the condition \eqref{eq:existence_f_weak_cont_ass} holds, we obtain weak-weak continuity of $F$.}

Now we consider the second case, \commB{i.e. \eqref{pseudo_ass}-\eqref{pseudo_def},  for weak closedness of $F$}. Assume that $ V\compt H$ as in \eqref{pseudo_ass}, $ H\hookrightarrow W^*$
and that $-F$ is pseudomonotone as in \eqref{pseudo_def}.
Given $(u_n,\lambda_n) \overset{\Vc\times X}{\rightharpoonup} (u,\lambda)$, $F(\commA{\lambda_n,u_n})\overset{\Wc}{\rightharpoonup} g$ and $ V\compt H\hookrightarrow \commA{W^*}, H \embed \tilde{V}$, %\todo{again, we need $H \embed \tilde{V}$, right?}
it follows that $\Vc\compt L^2(0,T;H)$ \cite[Lemma 7.7]{Roubicek} \commA{(see Appendix \ref{appendix-Roubicek})} and that $u_{n}\to u $ strongly in $L^2(0,T;H)$. 
%Hence there exists a further (non-relabeled) subsequence $(u_{n_k})_n$ such that $u_{n_k}(t)\overset{H}{\to}u(t)$ for almost every  $t\in(0,T)$, which, using that $H \hookrightarrow W^*$, implies that $ u_{n_k}(t)\overset{W^*}{\to}u(t)$ for almost every  $t\in(0,T)$. 
By the embedding $H \hookrightarrow W^*$, it holds also $u_{n} \rightarrow u$ in $\Wc^*$. With $\xi_{n}\commB{(t)}:= |\la F(t,\lambda_{n},u_{n}(t)),u_{n}(t)-u(t)\ra_\wws|$, we obtain 
\begin{align} \label{pseudo-premise}
 \int_0^T |\xi_{n}(t)|\wrt t  \leq \|F(\lambda_{n},u_{n})\|_\Wc \|u_{n}-u\|_{\Wc^*} \leq C\|u_{n}-u\|_{\Wc^*}\overset{n\to\infty}{\rightarrow}0.
\end{align}
By moving to a subsequence indexed by $(n_k)_k$, we thus have $\xi_{n_k}(t)\to 0 $ as $k\to\infty$ for almost every $t\in(0,T)$. As $\underset{k\rightarrow\infty}{\liminf} \, \xi_{n_k}(t)\to 0$, pseudomonotonicity (as in \eqref{pseudo_def}) implies that for any $v\in\Wc^*$,
\begin{align*}
\la F(t,u(t),\lambda),u(t)-v(t) \ra_\wws \geq \underset{k\rightarrow\infty}{\limsup}\la F(t,u_{n_k}(t),\lambda_{n_k}),u_{n_k}(t)-v(t) \ra_\wws.
\end{align*}
%Further, from the reverse Fatou Lemma \todo{reference} and weak convergence of $F(\lambda_n,u_n)$ to $g$ in $\Wc$, we obtain that for any $v \in \Wc^*$,
%\begin{align*}
%\int_0^T\underset{k\rightarrow\infty}{\limsup} \la F(\lambda_{n_k},u_{n_k}(t)),u(t)-v(t) \ra_\wws\wrt t &\geq \underset{k\rightarrow\infty}{\limsup} \int_0^T\la F(\lambda_{n_k},u_{n_k}(t)),u(t)-v(t) \ra_\wws\wrt t\\
%&=  \la g,u-v \ra_\WWs.
%\end{align*}
Further, from the Fatou–Lebesgue theorem, we get
\begin{align*}
&\la F(\lambda,u),u-v \ra_\WWs = \int_0^T \la F(t,\lambda,u(t)),u(t)-v(t) \ra_\wws\wrt t\\ 
&\geq \int_0^T\underset{k\rightarrow\infty}{\limsup} \la F(t,\lambda_{n_k},u_{n_k}(t)),u_{n_k}(t)-v(t) \ra_\wws\wrt t \\
&\geq \underset{k\rightarrow\infty}{\liminf} \int_0^T\la F(t,\lambda_{n_k},u_{n_k}(t)),u_{n_k}(t)-v(t) \ra_\wws\wrt t\\
%\geq \int_0^T \underset{k\rightarrow\infty}{\lim} \la F(t,u_{n_k}(t),\lambda_k),u_{n_k}(t)-v(t) \ra_\wws\wrt t \geq \\
&\geq  \underset{k\rightarrow\infty}{\liminf}\int_0^T  \la F(\lambda_{n_k},u_{n_k}(t)),u_{n_k}(t)-u(t) \ra_\wws\wrt t+\underset{k\rightarrow\infty}{\liminf}\int_0^T \la F(\lambda_{n_k},u_{n_k}(t)),u(t)-v(t) \ra_\wws\wrt t\\
&= \underset{k\rightarrow\infty}{\lim}\int_0^T   \la F(\lambda_{n_k},u_{n_k}(t)),u_{n_k}(t)-u(t) \ra_\wws\wrt t+\underset{k\rightarrow\infty}{\lim}\int_0^T \la F(\lambda_{n_k},u_{n_k}(t)),u(t)-v(t) \ra_\wws\wrt t\\
&=0+\la g,u-v \ra_\WWs,
\end{align*}
where the last estimate follows from \eqref{pseudo-premise} and from weak convergence of of $F(\lambda_n,u_n)$ to $g$ in $\Wc$. As this estimate is valid for any $v\in\Wc^*$, we conclude that $F$ is weakly closed on $X\times\Vc$, that is,
\[F(\commA{\lambda,u})=g.\]
\end{proof}
\end{lem}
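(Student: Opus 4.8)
The plan is to prove the three assertions in the order stated: weak continuity of $\Nc$, weak closedness of $F$ under each of the two alternative hypotheses, and finally weak closedness of $\Gc$ by assembling the pieces.

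For $\Nc$, I would first note that because $\sigma$ is locally Lipschitz, the pointwise map $(\alpha,y,\theta)\mapsto\Nc_\theta(\alpha,y)$ is itself locally Lipschitz on $\R^m\times\R\times\Theta$, being a finite composition of affine maps and $\sigma$. Given $(\alpha_n,u_n,\theta_n)\rightharpoonup(\alpha,u,\theta)$, the embedding $\Vc\hookrightarrow L^\infty\tom$ bounds all arguments uniformly by some $M$, so a single Lipschitz constant $L(M)$ governs every difference. The crucial extra ingredient is upgrading the spatial compact embedding $V\compt L^\pfix(\Omega)$ to $\Vc\compt L^2(0,T;L^\pfix(\Omega))$ via \cite[Lemma 7.7]{Roubicek}, treating separately the two cases $L^\pfix(\Omega)\hookrightarrow\tilde V$ and $\tilde V\hookrightarrow L^\pfix(\Omega)$ permitted by Assumption \ref{ass:general_basic}; this gives $u_n\to u$ strongly in $L^2(0,T;L^\pfix(\Omega))$. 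I would then test the difference against $w^*\in\Wc^*$, bound the integrand pointwise by $L(M)(|\alpha_n-\alpha|+|u_n-u|+|\theta_n-\theta|)$, and use $W^*\hookrightarrow L^{\pfix/(\pfix-1)}(\Omega)$ with Hölder to conclude $\|\Nc(\alpha_n,u_n,\theta_n)-\Nc(\alpha,u,\theta)\|_\Wc\to0$. Notably this even delivers weak-to-strong continuity.

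For $F$, I would split into the two assumed cases. In the weak-continuity case, $\Vc\hookrightarrow C(0,T;H)$ makes each evaluation $(\cdot)(t):\Vc\to H$ bounded and linear, hence weak-to-weak continuous, so $(u_n(t),\lambda_n)\rightharpoonup(u(t),\lambda)$ in $H\times X$ for every $t$; weak continuity of $F(t,\cdot)$ then forces the integrand $\la F(\lambda_n,u_n)(t)-F(\lambda,u)(t),w^*(t)\ra_\wws$ to vanish pointwise, while the growth condition \eqref{growth} supplies an $n$-independent integrable dominating bound, so dominated convergence closes the case.

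I expect the pseudomonotone case to be the main obstacle. Here I would use $V\compt H\hookrightarrow W^*$ and $H\embed\tilde V$ to obtain $\Vc\compt L^2(0,T;H)$, hence $u_n\to u$ strongly in $L^2(0,T;H)$ and therefore in $\Wc^*$; this yields $\int_0^T|\la F(\lambda_n,u_n)(t),u_n(t)-u(t)\ra_\wws|\,\wrt t\to0$, so along a subsequence the integrand tends to $0$ a.e. Applying the pointwise hypothesis \eqref{pseudo_def} along this subsequence gives, for each $v\in\Wc^*$, a pointwise $\limsup$ inequality, and the delicate step is to integrate it correctly: invoking the Fatou–Lebesgue theorem to move the integral inside the $\limsup$, then splitting the pairing into a $u_n-u$ part that vanishes by the previous bound and a $u-v$ part that converges by $F(\lambda_n,u_n)\rightharpoonup g$, I would arrive at $\la F(\lambda,u),u-v\ra_\WWs\ge\la g,u-v\ra_\WWs$ for all $v$, whence $F(\lambda,u)=g$. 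The subtlety is keeping the $\limsup$/$\liminf$ directions consistent through the interchange with integration and securing the integrability that Fatou–Lebesgue needs, which is exactly what the uniform growth bound and the strong $\Wc^*$-convergence furnish. Finally, for $\Gc=(\dot u-F(\lambda,u)-\Nc_\theta(\alpha,u),u(0)-u_0,Mu)$ I would argue componentwise: $M\in\Lc(\Vc,\Yc)$ is weakly continuous by linearity and boundedness; $((\cdot)_{t=0},\mathrm{Id}):\Vc\times U_0\to H$ is weakly continuous using $U_0\hookrightarrow H$ and boundedness of the trace from $\Vc\hookrightarrow C(0,T;H)$; and $\frac{d}{dt}:\Vc\to\Wc$ is weakly continuous straight from the definitions of the norms. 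Combined with the weak continuity of $\Nc$ and weak closedness of $F$ just shown, this gives weak closedness of $\Gc$.
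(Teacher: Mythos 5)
Your proposal is correct and follows essentially the same route as the paper's own proof: the same Aubin--Lions upgrade to $\Vc\compt L^2(0,T;L^{\pfix}(\Omega))$ with the two-case treatment of $\tilde V$ for the weak-to-strong continuity of $\Nc$, the same pointwise-evaluation plus dominated-convergence argument in the weakly continuous case for $F$, the same subsequence extraction from $\int_0^T|\xi_n|\,\wrt t\to 0$ followed by the pointwise pseudomonotonicity hypothesis and the Fatou--Lebesgue interchange in the second case, and the same componentwise assembly for $\Gc$. No gaps; the handling of the $\limsup$/$\liminf$ directions and of the splitting into the vanishing $u_{n_k}-u$ pairing and the weakly convergent $u-v$ pairing matches the paper exactly.
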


%\commA{added remark}
%\begin{rem}
%If $\sigma$ is Lipschitz continuous, say $\C_\text{Lip}(\R,\R)$, for weak closeness of $\Nc$ the inclusion $\Vc\hookrightarrow L^\infty\tom$ is not necessary.
%\end{rem}

Existence of a solution to \eqref{eq:main_identification_aao_setting} and \eqref{eq:standard_pid_aao_setting} now follows from a standard application of the direct method \cite{Engl96_book_regularization_ip_mh, Troeltzsch_optimal_control_pde_mh}, using weak-closedness of $\Gc$ and weak lower semi-continuity of the involved quantities.

% from standard results  that essentially assume weakly closedness of the forward operator. 

\begin{prop}[Existence]\label{prop-minexist}
Let the assumptions of Lemma \ref{lem:closedness_forward_operator} hold, and assume that $\Rc_1, \Rc_2 $ are nonnegative, weakly lower semi-continuous and such that the sublevel sets of $(\lambda,\alpha,u_0,u,\theta) \mapsto \Rc_1(\lambda,\alpha,u_0,u) + \Rc_2(\theta)$ are weakly precompact. Then the minimization problems \eqref{eq:main_identification_aao_setting} and \eqref{eq:standard_pid_aao_setting} 
%in the space setting \eqref{space-UW} 
admit a solution.
\end{prop}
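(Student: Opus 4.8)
The plan is to apply the direct method of the calculus of variations. I would write the objective of \eqref{eq:main_identification_aao_setting} as
$J(\mathbf{x}) = \sum_{k=1}^K \|\Gc(\lambda^k,\alpha^k,u_0^k,u^k,\theta) - (0,0,y^k)\|^2_{\Wc\times H\times\Yc} + \sum_{k=1}^K \Rc_1(\lambda^k,\alpha^k,u_0^k,u^k) + \Rc_2(\theta)$,
where $\mathbf{x}$ collects all unknowns in the product space $(X\times\R^m\times U_0\times\Vc)^K\times\Theta$; the argument for \eqref{eq:standard_pid_aao_setting} is identical with $K=1$ and $\theta$ fixed. Since every summand is nonnegative, $m := \inf J \geq 0$; assuming the problem is feasible (i.e.\ $m<\infty$, otherwise any admissible point is trivially a solution), I would fix a minimizing sequence $(\mathbf{x}_n)_n$ with $J(\mathbf{x}_n)\to m$.

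First I would extract compactness from the regularization. Because $J(\mathbf{x}_n)$ is bounded and all summands are nonnegative, each $\Rc_1(\lambda^k_n,\alpha^k_n,u_{0,n}^k,u^k_n)$ and $\Rc_2(\theta_n)$ is bounded by some $C$ uniformly in $n$; hence for each $k$ the tuple $(\lambda^k_n,\alpha^k_n,u_{0,n}^k,u^k_n,\theta_n)$ lies in the sublevel set $\{\Rc_1+\Rc_2\leq 2C\}$, which is weakly precompact by hypothesis. As a finite product of weakly precompact sets is weakly precompact, successive extraction of subsequences yields a (not relabeled) subsequence with $\mathbf{x}_n \rightharpoonup \mathbf{x}^* = ((\lambda^{*k},\alpha^{*k},u_0^{*k},u^{*k})_k,\theta^*)$ weakly in the product space.

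The crux is passing to the limit in the data-fidelity terms, where care is needed because $\Gc$ is only weakly \emph{closed} and not weakly continuous, so $\mathbf{x}\mapsto\|\Gc(\mathbf{x})-(0,0,y^k)\|^2$ is not a priori weakly lower semicontinuous. To handle this I would exploit that boundedness of $J(\mathbf{x}_n)$ also bounds $\Gc(\lambda^k_n,\alpha^k_n,u_{0,n}^k,u^k_n,\theta_n)$ in the reflexive space $\Wc\times H\times\Yc$; passing to a further subsequence, $\Gc(\lambda^k_n,\ldots,\theta_n)\rightharpoonup z^k$ for some $z^k$, for each of the finitely many $k$. Weak closedness of $\Gc$ (Lemma \ref{lem:closedness_forward_operator}) then identifies the limit as $z^k = \Gc(\lambda^{*k},\alpha^{*k},u_0^{*k},u^{*k},\theta^*)$. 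Since $\Gc(\lambda^k_n,\ldots)-(0,0,y^k)\rightharpoonup z^k-(0,0,y^k)$ and the squared norm is convex and continuous, hence weakly lower semicontinuous, I obtain $\|\Gc(\lambda^{*k},\ldots,\theta^*)-(0,0,y^k)\|^2 \leq \liminf_n \|\Gc(\lambda^k_n,\ldots,\theta_n)-(0,0,y^k)\|^2$. This identification of the weak image limit via weak closedness is the main obstacle; the remaining steps are routine.

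Finally I would combine the estimates. Weak lower semicontinuity of $\Rc_1$ and $\Rc_2$ gives $\Rc_1(\lambda^{*k},\ldots)\leq\liminf_n\Rc_1(\lambda^k_n,\ldots)$ and $\Rc_2(\theta^*)\leq\liminf_n\Rc_2(\theta_n)$. Summing the three families of inequalities and using superadditivity of $\liminf$ over the finitely many terms yields $J(\mathbf{x}^*) \leq \liminf_n J(\mathbf{x}_n) = m$. Since $m=\inf J$, the point $\mathbf{x}^*$ is a minimizer, establishing existence for both \eqref{eq:main_identification_aao_setting} and \eqref{eq:standard_pid_aao_setting}.
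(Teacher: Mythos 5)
Your proof is correct and follows exactly the route the paper intends: the paper disposes of this proposition in one line as ``a standard application of the direct method, using weak-closedness of $\Gc$ and weak lower semi-continuity of the involved quantities,'' and your write-up fills in precisely that argument, including the one genuinely delicate point --- that since $\Gc$ is only weakly closed (not weakly continuous), one must first bound $\Gc(\mathbf{x}_n)$ via the objective, extract a weak limit in the reflexive space $\Wc\times H\times\Yc$, identify it through graph-closedness, and only then invoke weak lower semicontinuity of the norm. No gaps; the argument matches the paper's cited strategy.
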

%
%\begin{proof}
%Using weak-closedness of $\Gc$ as shown in Lemma \ref{lem:closedness_forward_operator} and weak lower semi-continuity of $\|\cdot\|_{\Wc\times H\times\Yc},$ $\Rc_1$ and $ \Rc_2$, existence follows from a standard application of the direct method. 
%\end{proof}

\begin{rem}[Stability] We note that under the assumptions of Proposition \ref{prop-minexist}, also stability for the minimization problems \eqref{eq:main_identification_aao_setting} and \eqref{eq:standard_pid_aao_setting} follows with standard arguments, see for instance \cite[Theorem 3.2]{Hofmann07_nonlinear_tikhonov_banach_mh}. Here, stability means that for convergent sequence of data $(y_n)_n$ converging to some $y$, any corresponding sequence of solutions admits a weakly convergent subsequence, and any limit of such weakly convergent subsequence is a solution of the original problem with data $y$.
\end{rem}

Next we deal with minimization problem \eqref{eq:main_identification_aao_setting} in the limit case where the given data converges to a noise-free ground truth, and the PDE should be fulfilled exactly. Our result in this context is a direct extension of classical results as provided for instance in \cite{Hofmann07_nonlinear_tikhonov_banach_mh}, but since also variants of this result will be of interest, we provide a short proof.
\begin{prop}[\commE{Limit case}]\label{prop-convergence} With the assumption of Proposition \ref{prop-minexist} and parameters $\beta^e,\beta^M>0$, consider the parametrized learning problem
\begin{multline} \label{eq:parametrized_learning_problem}
\min_{\substack{
(\lambda^k,\alpha^k,u_0^k,u^k)_k \subset X \times \R^m \times U_0\times\Vc \\
\theta \in \Theta
}} \sum_{k=1}^K \beta^e\| e(\lambda^k,\alpha^k,u_0^k,u^k,\theta) \|^2_{\Wc\times H} + \beta^M\| Mu^k-y^k \|^2_{\Yc}  \\ + \Rc_1(\lambda^k,\alpha^k,u_0^k,u^k)  +  \Rc_2(\theta),
\end{multline}
and assume that, for $((y^\dagger)^k)_k \in \Yc^K$, there exists $(\hat \lambda^k,\hat	 \alpha^k,\hat u_0^k,\hat u^k)_k \in X \times \R^m \times U_0\times\Vc $ and $\hat \theta \in \Theta $ such that $ e(\hat \lambda^k,\hat \alpha^k,\hat u_0^k,\hat u^k,\hat \theta) = 0 $ \commB{, $ 
M\hat u^k = (y^\dagger)^k$, $\Rc_1(\hat \lambda^k,\hat \alpha^k,\hat u_0^k,\hat u^k)< \infty $ for all $k$ and $  \Rc_2(\hat  \theta)< \infty$.}
%\todo{$y^\dagger$ vs. $(y^\dagger)^k$ in multiple places.}

Then, for any sequence $(y_n)_n = (y_n^1,\ldots,y_n^K)_n $ in $\Yc^K$ with $\sum_{k=1}^K\|y^k_n - \commA{(y^\dagger)^k} \|^2_{\Yc} := \delta_n^2 \rightarrow 0$ and parameters $\beta_n^e,\beta_n^M$ such that
\[ \beta^e_n \rightarrow \infty, \, \beta^M_n \rightarrow \infty \text{ and }\beta^M_n \delta^2_n \rightarrow 0 \]
as $n \rightarrow \infty$, any sequence of solutions $((\lambda_n^k,\alpha_n^k,(u_0^k)_n,u_n^k)_k,\theta_n)_n$ of \eqref{eq:parametrized_learning_problem} with parameters $\beta_n^e,\beta_n^M$ \commB{and data $y_n$} admits a weakly convergent subsequence, and any limit of such a subsequence is a solution to
\begin{equation} \label{eq:convergence_r_minimizing}
\min_{\substack{
(\lambda^k,\alpha^k,u_0^k,u^k)_k \subset X \times \R^m \times U_0\times\Vc \\
\theta \in \Theta
}} \sum_{k=1}^K\Rc_1(\lambda^k,\alpha^k,u_0^k,u^k)  +  \Rc_2(\theta) \quad \text{s.t. for all }k: \begin{cases}
e(\lambda^k,\alpha^k,u_0^k,u^k,\theta) = 0 \\ 
Mu^k = (y^\dagger )^k
\end{cases}
\end{equation}
If, further, the solution to \eqref{eq:convergence_r_minimizing} is unique, then the entire sequence  $((\lambda_n^k,\alpha_n^k,(u_0^k)_n,u_n^k)_k,\theta_n)_n$ weakly converges to the solution of \eqref{eq:convergence_r_minimizing}.
\begin{proof}
With $(\hat \lambda^k,\hat \alpha^k,\hat u_0^k,\hat u^k)_k $ and $\hat \theta$ arbitrary such that $e(\hat\lambda^k,\hat\alpha^k,\hat u_0^k,\hat u^k,\hat\theta) = 0 $ and $ M\hat u^k = (y^\dagger)^k$, and $((\lambda_n^k,\alpha_n^k,(u_0^k)_n,u_n^k)_k,\theta_n)_n$ any sequence of solutions to \eqref{eq:parametrized_learning_problem} with parameters $\beta^e_n,\beta^M_n$, \commA{by optimality} it holds that 
\begin{multline} \label{eq:convergence_main_estimate}
\sum_{k=1}^K \beta^e_n\| e(\lambda^k_n,\alpha^k_n,(u_0^k)_n,u^k_n,\theta_n) \|^2_{\Wc\times H} + \beta^M_n \| Mu_n^k-y_n^k \|^2_{\Yc}   + \Rc_1(\lambda_n^k,\alpha_n^k,(u_0^k)_n,u^k_n)  +  \Rc_2(\theta_n)
 \\ \leq \commB{\sum_{k=1}^K }\Rc_1(\hat \lambda^k,\hat \alpha^k,\hat u_0^k,\hat u^k)  + \beta_n^M\delta_n^2 +  \Rc_2(\hat \theta) 
\end{multline}
By weak precompactness of the sublevel sets of  $\Rc_1$ and $\Rc_2$ and convergence of $\beta_n^M\delta_n^2$ to zero it thus follows that $((\lambda_n^k,\alpha_n^k,(u_0^k)_n,u_n^k)_k,\theta_n)_n$ admits a weakly convergent subsequence \commB{in $(X \times \R^m \times U_0\times\Vc)^K\times\Theta$}.

Now let $((\lambda^k,\alpha^k,u_0^k,u^k)_k,\theta)$ be the limit of such a weakly convergent subsequence, which we again denote by $((\lambda_n^k,\alpha_n^k,(u_0^k)_n,u_n^k)_k,\theta_n)_n$. Closedness of $\Gc$ together with  \commB{lower semi-continuity of the norm $\|\cdot \|_{\Wc\times H}$ } and the estimate \eqref{eq:convergence_main_estimate} (possibly moving to another non-relabeled subsequence) then yields that both
\begin{align*}
\sum_{k=1}^K \| e(\lambda^k,\alpha^k,u_0^k,u^k,\theta) \|^2_{\Wc\times H} 
& \leq \liminf_n \sum_{k=1}^K \| e(\lambda^k_n,\alpha^k_n,(u_0^k)_n,u^k_n,\theta_n) \|^2_{\Wc\times H} \\
& \leq  \liminf _n \commB{\sum_{k=1}^K }\Rc_1(\hat \lambda^k,\hat \alpha^k,\hat u_0^k,\hat u^k)/\beta^e_n +  \beta_n^M(\delta_n^2 /\beta^e_n) +  \Rc_2(\hat \theta)/\beta^e_n   = 0
\end{align*} 
and
\begin{multline} 
\| Mu^k-(y^\dagger)^k \|^2_{\Yc} \leq \liminf_n 
\| Mu_n^k-y_n^k \|^2_{\Yc}
 \leq \liminf _n \Rc_1(\hat \lambda^k,\hat \alpha^k,\hat u_0^k,\hat u^k)/\beta^M_n  +  \Rc_2(\hat \theta)/\beta^M_n + \delta_n^2  = 0.
\end{multline}
This shows that $e(\lambda^k,\alpha^k,u_0^k,u^k,\theta) = 0 $ and $ Mu^k = (y^\dagger)^k$ for all $k$. Again using the estimate \eqref{eq:convergence_main_estimate}, now together with weak lower semi-continuity of $\Rc_1,\Rc_2$, we further obtain that 
\begin{align*}
\Rc_1(\lambda^k,\alpha^k,u_0^k,u^k)  +  \Rc_2(\theta) &\leq
 \liminf_n \Rc_1(\lambda_n^k,\alpha_n^k,(u_0^k)_n,u_n^k)  +  \Rc_2(\theta_n)
\\ & \leq \liminf_n   \Rc_1(\hat \lambda^k,\hat \alpha^k,\hat u_0^k,\hat u^k)  +  \Rc_2(\hat \theta) + \beta_n^M\delta_n^2  \\
& =  \Rc_1(\hat \lambda^k,\hat \alpha^k,\hat u_0^k,\hat u^k)  +  \Rc_2(\hat \theta).
\end{align*}
Since $(\hat \lambda^k,\hat \alpha^k,\hat u_0^k,\hat u^k)_k $ and $\hat \theta$ were arbitrary solutions of $e(\hat\lambda^k,\hat\alpha^k,\hat u_0^k,\hat u^k,\hat\theta) = 0 $ and $ M\hat u^k = (y^\dagger)^k$, it follows that $((\lambda^k,\alpha^k,u_0^k,u^k)_k,\theta)$ solves \eqref{eq:convergence_r_minimizing} as claimed.

At last, in case the solution to  \eqref{eq:convergence_r_minimizing} is unique, weak convergence of the entire sequence follows by a standard argument, using that any subsequence contains another subsequence that weakly converges to the same limit.
\end{proof}
\end{prop}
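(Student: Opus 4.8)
The plan is to follow the classical Tikhonov-type convergence argument, adapted to the two independent penalty scalings $\beta^e_n$ and $\beta^M_n$ and to the all-at-once structure. The backbone of the whole proof is a single comparison estimate obtained from optimality: since $((\lambda^k_n,\alpha^k_n,(u_0^k)_n,u^k_n)_k,\theta_n)$ minimizes \eqref{eq:parametrized_learning_problem} for the parameters $\beta^e_n,\beta^M_n$, I would insert the exact solution $((\hat\lambda^k,\hat\alpha^k,\hat u_0^k,\hat u^k)_k,\hat\theta)$ as a competitor. Because $e(\hat\lambda^k,\hat\alpha^k,\hat u_0^k,\hat u^k,\hat\theta)=0$ and $M\hat u^k=(y^\dagger)^k$, the PDE-residual term of the competitor vanishes and its data-misfit term reduces to $\beta^M_n\|(y^\dagger)^k-y^k_n\|_{\Yc}^2\le\beta^M_n\delta_n^2$. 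This yields a master inequality bounding the full objective at the minimizer by $\sum_k\Rc_1(\hat\lambda^k,\hat\alpha^k,\hat u_0^k,\hat u^k)+\Rc_2(\hat\theta)+\beta^M_n\delta_n^2$.

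From here I would read off the three conclusions in turn. Since every summand on the left is nonnegative and $\beta^M_n\delta_n^2\to0$, the right-hand side stays bounded; in particular the regularizer values $\sum_k\Rc_1(\lambda^k_n,\alpha^k_n,(u_0^k)_n,u^k_n)+\Rc_2(\theta_n)$ are uniformly bounded in $n$. The weak precompactness of the sublevel sets of $(\lambda,\alpha,u_0,u,\theta)\mapsto\Rc_1+\Rc_2$ assumed in Proposition \ref{prop-minexist} then immediately delivers a weakly convergent subsequence, which is the first assertion.

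To identify the weak limit $((\lambda^k,\alpha^k,u_0^k,u^k)_k,\theta)$ I would use the master inequality once more, now dividing the residual terms by the blowing-up parameters. Boundedness of the objective gives $\|e(\lambda^k_n,\alpha^k_n,(u_0^k)_n,u^k_n,\theta_n)\|^2_{\Wc\times H}\le C/\beta^e_n\to0$ and $\|Mu^k_n-y^k_n\|^2_{\Yc}\le C/\beta^M_n\to0$; combined with $y_n\to y^\dagger$ this shows that the images $\Gc(\lambda^k_n,\alpha^k_n,(u_0^k)_n,u^k_n,\theta_n)$ converge strongly, hence weakly, to $(0,0,(y^\dagger)^k)$. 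Invoking weak closedness of $\Gc$ from Lemma \ref{lem:closedness_forward_operator} then forces the weak limit to satisfy $e(\lambda^k,\alpha^k,u_0^k,u^k,\theta)=0$ and $Mu^k=(y^\dagger)^k$, so the limit is feasible for \eqref{eq:convergence_r_minimizing}. To see it is in fact optimal, I would pass to the $\liminf$ in the master inequality using weak lower semicontinuity of $\Rc_1,\Rc_2$, obtaining $\sum_k\Rc_1(\lambda^k,\alpha^k,u_0^k,u^k)+\Rc_2(\theta)\le\sum_k\Rc_1(\hat\lambda^k,\hat\alpha^k,\hat u_0^k,\hat u^k)+\Rc_2(\hat\theta)$; since the exact solution was an arbitrary feasible point, this is precisely minimality for \eqref{eq:convergence_r_minimizing}.

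Finally, the uniqueness statement follows from the standard subsequence principle: every subsequence of the original sequence again satisfies the hypotheses, hence has a further subsequence converging weakly to a solution of \eqref{eq:convergence_r_minimizing}, which by uniqueness is the single minimizer; a sequence in which every subsequence has a sub-subsequence converging to the same limit converges to that limit. The step I expect to require the most care is the simultaneous handling of the two scalings in the passage to the limit: one needs $\beta^e_n,\beta^M_n\to\infty$ to drive both residuals to zero, yet the coupling $\beta^M_n\delta_n^2\to0$ is exactly what keeps the master inequality bounded, and the feasibility of the limit rests on upgrading weak convergence of the arguments through weak closedness of $\Gc$ — so the genuine content sits in Lemma \ref{lem:closedness_forward_operator} rather than in the bookkeeping.
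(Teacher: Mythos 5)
Your proposal is correct and follows essentially the same route as the paper's own proof: the same optimality-based master inequality \eqref{eq:convergence_main_estimate}, compactness of sublevel sets of $\Rc_1+\Rc_2$ for subsequence extraction, vanishing residuals after dividing by $\beta^e_n$ and $\beta^M_n$ combined with weak closedness of $\Gc$ for feasibility of the limit, weak lower semicontinuity of the regularizers for optimality, and the standard subsequence principle for whole-sequence convergence under uniqueness. The only cosmetic difference is that you phrase feasibility via strong convergence of the images to $(0,0,(y^\dagger)^k)$ before invoking weak closedness, while the paper writes the same mechanism as $\liminf$-inequalities for the norms; these are equivalent.
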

\begin{rem}[Different limit cases] The above result considers the limit case of both fulfilling the PDE exactly and matching noise-free ground truth measurements. Variants can be easily obtained as follows: In case only the PDE should be fulfilled exactly, one can consider $\beta^M$ fixed and only $\beta^e$ converging to infinity (at an arbitrary rate), such that the resulting limit solution will be a solution of the reduced setting. Likewise, one can consider the case that $\beta^e$ is fixed and $\beta^M$ converges to infinity appropriately in dependence of the noise level $\delta$, in which case the limit solutions solves the all-at-once setting with the hard constraint $Mu^k=(y^\dagger)^k$, see \cite{holler18coupled_mh} for some general results in that direction. The corresponding assumption of existence of $((\hat \lambda^k,\hat \alpha^k,\hat u_0^k,\hat u^k)_k,\hat \theta)$ such that $e(\hat \lambda^k,\hat \alpha^k,\hat u_0^k,\hat u^k,\hat \theta) = 0 $ and $ 
M\hat u^k = (y^\dagger)^k$ can be weakened in both cases accordingly. 

Further, note that the convergence result as well as its variants can be deduced also for the learning-informed parameter identification problem \eqref{eq:standard_pid_aao_setting} exactly the same way.
\end{rem}

\begin{rem}[Uniqueness of minimum-norm solution.]\label{rem:unique} A sufficient condition for uniqueness of a minimum-norm solution, and thus for convergence of the entire sequence of minimizers as stated in Proposition \ref{prop-convergence}, is the tangential cone condition and existence of a solution $(\hat{\lambda}^k,\hat{u}_0^k,\hat{u}^k)$ to the  PDE such that $Mu^k = (y^\dagger)^k$, see \cite[Proposition 2.1]{KalNeuSch08}. In Section \ref{sec:tcc} below, we discuss this condition in more detail and provide a result which, together with Remark \ref{lem-TCC}, ensures this condition to hold for some particular choices of $F$  and $\Nc_\theta$. Regarding solvability of the PDE, we refer to Proposition \ref{prop-ex0-PDEexistence} below, where a particular application is considered.
\end{rem}

%\todo{TN: duplicated indices $M$ for number of data and observation operator. Changed number of data to $K$.}

%\todo{As Proposition \ref{prop-convergence} mentions solvability of the learning-informed PDE, should we briefly refer to proposition ?}

\subsection{Differentiability of the forward operator}

%Solution methods for nonlinear optimization problems related to inverse problems, like gradient descent or Newton-type methods, require structural assumptions on the nonlinear forward operator $\Gc$ in order to ensure convergence. These are uniform boundedness of the derivative of the forward operator and the tangential cone condition.
%These crucial conditions enforce certain local convexity of the model residual guaranteeing uniqueness of a minimum-norm solution (see \cite[Proposition 2.1]{KalNeuSch08} and, furthermore, ensure convergence of iterative methods \cite{Sche95,KalNeuSch08}. 

Solution methods for nonlinear optimization problems, like gradient descent or Newton-type methods, require uniform boundedness of the derivative of $\Gc$. Differentiability of $\Gc$ is a question of differentiability of $F$ and $\Nc$, which is discussed in the following. Note that there, and henceforth, we denote by $H'(a): A \rightarrow B$ the \commB{G\^ateaux} derivative of a function $H:A \rightarrow B$ and define \commB{G\^ateaux} differentiability in the sense of \cite[Section 2.6]{Troeltzsch_optimal_control_pde_mh}, i.e., require $H'(a)$ to be a bounded linear operator. The basis for differentiability of the forward operator is the following lemma, which is a direct extension of \cite[Lemma 4.12]{Troeltzsch_optimal_control_pde_mh}.

\begin{lem} \label{lem:differentiability_general}
Let $A,B,S$ be Banach spaces such that $A \embed S$. For $\Sigma \subset \R^N$ open and bounded, and $r \in [1,\infty)$, let $\Ac$, $\Bc$ be Banach spaces such that $\Ac \embed L^r(\Sigma,A)$ and $\Ac \embed L^\infty(\Sigma,S)$, and \commB{$L^r(\Sigma,B) \embed \Bc$}. Further, let $H:\Sigma \times A \rightarrow B$ be a function such that $H(z,\cdot)$ is \commB{G\^ateaux} differentiable for every $z \in \Sigma$ with derivative $H'(z,\cdot)$, and such that $H$ is locally Lipschitz continuous in the sense that, for any $M>0$ there exists $L(M)>0$ such that for every $a,\xi \in A$ with $\max\{\|a\|_S,\|\xi\|_S\} \leq M$
\begin{equation} \label{eq:differentiability_general_lipschitz_estimate}
 \|H(z,a) - H(z,\xi)\|_B \leq L(M)(\|a-\xi\|_A + (\max \{\|a\|_A,\|\xi\|_A \}+1) \|a-\xi\|_S)   .
 \end{equation}
Then, if the Nemytskii operators $H:\Ac \rightarrow \Bc$ given as $H(a)(z) = H(z,a(z))$ and $H':\Ac \rightarrow \Lc(\Ac,\Bc)$ given as $H'(a)(\xi)(z) = H'(z,a(z))(\xi(z))$ are well defined, then $H:\Ac \rightarrow \Bc$ is also \commB{G\^ateaux} differentiable with $H'(a) \in \mathcal{L}(\Ac,\Bc)$  given as $H'(a)(\xi)(z) = H'(z,a(z))(\xi(z))$. 
Further, $H'$ is locally bounded in the sense that, for any bounded set $\tilde{\Ac} \subset \Ac$, 
$ \sup_{a \in \tilde{A}} \|H'(a)\|< \infty.
$
\begin{proof}
Fix $M>0$ and $z \in \Sigma$. Local Lipschitz continuity implies for any $\tilde{a},\xi \in A$ with $\|\tilde{a}\|_S+1\leq M$,
\begin{equation} \label{eq:abstract_gateaux_lipschitz_estimate}
 \|H'(z,\tilde{a})\xi\|_{\commA{B}} = \lim_{\delta \rightarrow 0 } \left\| \frac{H(z,\tilde{a}+\delta \xi) - H(z,\tilde{a})}{\delta} \right\|_B \leq L(M)(\|\xi\|_A + (\|\tilde{a}\|_A + 2) \|\xi\|_S) .
\end{equation}
Next, define $h:[0,1] \rightarrow B$ as $h(s) = H(z,a + \epsilon s \xi)$, for $a \in A$ and $\epsilon\in (0,1)$ such that $\|a\|_S+2  \leq M$, $\epsilon\|\xi\|_S \leq 1$. We note that $h$ is differentiable and Lipschitz continuous (hence absolutely continuous), such that by the fundamental theorem of calculus for Bochner spaces, see \cite[Theorem 2.2.17]{Gasinski_nonlinear_analysis_mh},
%\begin{equation} \label{eq:diff_fundamental_thm}
$h(1) - h(0) = \int_0^1 h'(s) \wrt s.$
%\end{equation} 
This yields
\begin{multline*}
\left( \frac{1}{\epsilon}\|H(z,a+\epsilon\xi)-H(z,a)-\epsilon H'(z,a)\xi \|_B\right) ^r
=\frac{1}{\epsilon^r}\left\|\int_0^1  \epsilon H'(z,a + s \epsilon \xi) \xi -\epsilon H'(z,a)\xi \commA{\wrt s} \right\|_B^r \\
\commA{\leq \left(\int_0^1 \left\| H'(z,a + s \epsilon \xi) \xi \right \| _B+ \left \| H'(z,a)\xi\right\|_B \wrt s \right)^r  \leq \left(\int_0^1  \sup_{\tilde{s} \in [0,1]} 2\left\| H'(z,a + \tilde{s} \epsilon \xi) \xi \right \| _B \wrt s \right)^r }\\
\leq \sup_{\tilde{s}\in[0,1]} 2^r \|H'(z,a+\tilde{s}\epsilon\xi) \xi\|_B^r
\leq 2^{2r-1}L(M)^r (\|\xi\|_A^r + (\|a\|+\|\xi\|_A+2)^r\|\xi\|_S^r).
\end{multline*}

Now by \commB{$\Ac \embed L^\infty(\Sigma,S)$}, for $a,\xi \in \Ac$, we can apply the above with $M:= \sup_{z \in \Sigma}\|a(z)\|_S+2$ and $\epsilon$ sufficiently small such that $\epsilon \sup_{z \in \Sigma}\|\xi(z)\|_S \leq 1$ and obtain
\begin{multline*}
r_H(\epsilon):=
 \int_\Sigma  \left(\frac{1}{\epsilon} \|H(z,a(z)+\epsilon\xi(z))-H(z,a(z))-\epsilon H'(z,a(z))\xi(z) \|_B \right) ^r\wrt z\\
\leq \int_\Sigma 2^{2r-1}L(M)^r (\|\xi(z)\|_A^r + (\|a(z)\|_A + \|\xi(z)\|_A+2)^r\|\xi(z)\|_S^r) dz \\
 \leq 2^{2r-1}L(M)^r\left(  \|\xi\|_\Ac^r  + \sup_{z\in \Sigma} \|\xi(z)\|_S^r \int_\Sigma (\|a(z)\|_A + \|\xi(z)\|_A + 2)^r \right) < \infty
\end{multline*}
Using the Lebesgue’s Dominated Convergence Theorem, we deduce $\lim_{\epsilon \rightarrow 0} r_H(\epsilon)=0$, which, by \commB{$L^r(\Sigma,B) \embed \Bc $}, shows \commB{G\^ateaux} differentiability. 

Local boundedness as claimed follows direct from choosing $M:= \sup_{a \in \tilde{\Ac}}\sup_{t \in (0,t)}\|a(t)\|_S+1$, and  integrating the $r$th power of \eqref{eq:abstract_gateaux_lipschitz_estimate} over time.
\end{proof}
\end{lem}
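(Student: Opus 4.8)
The plan is to verify Gât\^eaux differentiability directly from the definition, reducing everything to convergence in $L^r(\Sigma;B)$ and then applying the Dominated Convergence Theorem fibrewise in $z$. Concretely, fix $a,\xi\in\Ac$; since $L^r(\Sigma;B)\embed\Bc$ it suffices to show that the difference quotients
\[ z\mapsto \frac{1}{\epsilon}\bigl(H(z,a(z)+\epsilon\xi(z))-H(z,a(z))\bigr)-H'(z,a(z))\xi(z) \]
converge to $0$ in $L^r(\Sigma;B)$ as $\epsilon\to0$, since that space embeds continuously into $\Bc$. For each fixed $z$ the pointwise Gât\^eaux differentiability of $H(z,\cdot):A\to B$ already gives convergence of this expression to $0$ in $B$, so the whole argument hinges on producing an $\epsilon$-uniform, $z$-integrable dominating function and invoking Dominated Convergence.

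First I would extract a pointwise operator bound on $H'(z,\cdot)$ from the hypothesis \eqref{eq:differentiability_general_lipschitz_estimate}. Writing $H'(z,\tilde a)\xi$ as the limit of its own difference quotients, applying the local Lipschitz estimate to the pair $(\tilde a+\delta\xi,\tilde a)$, dividing by $\delta$ and letting $\delta\to0$, I obtain a bound of the shape
\[ \|H'(z,\tilde a)\xi\|_B \le L(M)\bigl(\|\xi\|_A+(\|\tilde a\|_A+C)\|\xi\|_S\bigr) \]
valid whenever $\|\tilde a\|_S\le M$. To control the difference quotient itself, I would note that $s\mapsto H(z,a+s\epsilon\xi)$ is Lipschitz in $s$ (again by \eqref{eq:differentiability_general_lipschitz_estimate}), hence absolutely continuous with a.e.\ derivative $\epsilon H'(z,a+s\epsilon\xi)\xi$, so the Bochner fundamental theorem of calculus \cite[Theorem 2.2.17]{Gasinski_nonlinear_analysis_mh} yields
\[ \frac{H(z,a+\epsilon\xi)-H(z,a)}{\epsilon}-H'(z,a)\xi = \int_0^1\bigl[H'(z,a+s\epsilon\xi)-H'(z,a)\bigr]\xi\wrt s. \]
Bounding the $B$-norm by the integral of the two derivative terms and inserting the operator bound above then produces a majorant of the form $CL(M)^r\bigl(\|\xi(z)\|_A^r+(\|a(z)\|_A+\|\xi(z)\|_A+C)^r\|\xi(z)\|_S^r\bigr)$ for the $r$-th power of the integrand, uniformly over all small $\epsilon$.

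The decisive step is then to see that this majorant is integrable over $\Sigma$, and here both embeddings on $\Ac$ enter essentially. The embedding $\Ac\embed L^\infty(\Sigma;S)$ is what lets me fix a single finite $M:=\sup_{z}\|a(z)\|_S+2$ uniform in $z$ (so that $L(M)$ is one fixed constant) and, crucially, bounds the factors $\|\xi(z)\|_S$ uniformly; the embedding $\Ac\embed L^r(\Sigma;A)$ then handles $\|\xi(z)\|_A^r$ and $(\|a(z)\|_A+\|\xi(z)\|_A+C)^r$. I expect the main obstacle to be exactly this mixed-integrability bookkeeping: the Lipschitz structure pairs an $A$-norm term ($\|a(z)\|_A$, only $L^r$) with an $S$-norm term ($\|\xi(z)\|_S$), and finiteness of the majorant relies on the $S$-factor being uniformly bounded so that the product still integrates — this is precisely why the two distinct embeddings are imposed. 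With the majorant in hand, Dominated Convergence gives $r_H(\epsilon)\to0$ and hence Gât\^eaux differentiability with the claimed formula for $H'(a)$; that $H'(a)\in\Lc(\Ac,\Bc)$ and the asserted local boundedness both follow by integrating the $r$-th power of the pointwise operator bound over $\Sigma$, taking $M:=\sup_{a\in\tilde{\Ac}}\sup_{z}\|a(z)\|_S+1$.
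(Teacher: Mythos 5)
Your proposal is correct and takes essentially the same route as the paper's proof: you extract the same pointwise operator bound on $H'(z,\cdot)$ from \eqref{eq:differentiability_general_lipschitz_estimate} via difference quotients, use the Bochner fundamental theorem of calculus along $s \mapsto H(z,a+s\epsilon\xi)$ to represent the remainder, build the same $\epsilon$-uniform majorant, and conclude by Dominated Convergence, with local boundedness obtained by integrating the $r$-th power of the operator bound. The only (welcome) difference is expository: you state explicitly the pointwise convergence in $B$ needed for Dominated Convergence and the precise division of labor between the two embeddings $\Ac \embed L^\infty(\Sigma;S)$ (fixing a single $M$ and controlling the $S$-factors) and $\Ac \embed L^r(\Sigma;A)$ (integrability of the $A$-norm terms), which the paper leaves largely implicit.
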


\begin{prop}[Differentiability]\label{prop-Differentiability}
Let Assumption \ref{ass:general_basic} hold and let \commA{\commB{$\sigma\in\Cc^1(\R,\R)$}}. 
Assume that for every $t \in (0,T)$, the mapping $F(t,\cdot,\cdot):X \times V \rightarrow W$ is jointly \commB{G\^ateaux} \commA{differentiable} with respect to  the second  and  third  arguments, with $(t,\lambda,u,\xi,v) \mapsto F'(t,\lambda,u)(\xi,v)$ satisfying the Carathéodory conditions. 

%  with continuous derivative $F'(t,\cdot)$ for  almost all $t\in(0,T)$.\\
In addition, assume that $F$ satisfies the following local Lipschitz continuity condition:
For all $ M\geq 0$ there exists $L(M)>0$, such that for all  $v_i \in V$ and $\lambda_i \in X$, $i=1,2$, with  $\max\{\|v_i\|_H, \|\lambda_i\|_X\} \leq M$ and for almost every $t \in (0,T)$,
\begin{equation}\label{cond-Lipschitz}
\|F(t,\lambda_1,v_1)-F(t,\lambda_2,v_2)\|_W
\leq L(M) (\|v_1-v_2\|_V+(\max\{\|v_1\|_V,\|v_2\|_V\} + 1)( \|v_1-v_2\|_H +\|\lambda_1-\lambda_2\|_X)).
\end{equation}

Then $\G:X \times \R^m \times U_0 \times\Vc \times \Theta\rightarrow \Wc\times H \times \Yc$ is \commB{G\^ateaux} differentiable with
\begin{align*}
\G'(\lambda,\alpha,u_0,u,\theta)=
\begin{pmatrix}
-F'_\lambda(\cdot,\lambda,u) & -\Nc'_\alpha(\alpha,u,\theta) & 0 & \frac{d}{dt}-F'_u(\cdot,\lambda,u) - \Nc'_u(\alpha,u,\theta) & -\Nc'_\theta(\alpha,u,\theta)\\
0 & 0 & -\text{Id} & (\cdot)_{t=0} & 0\\
0 & 0 & 0 & M & 0
\end{pmatrix}.
\end{align*}
Furthermore, $\Gc'(\cdot)$ is locally bounded in the sense specified in Lemma \ref{lem:differentiability_general}.
\end{prop}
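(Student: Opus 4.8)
The plan is to reduce everything to Lemma \ref{lem:differentiability_general} together with the trivial differentiability of bounded linear maps. Decomposing $\Gc$ by its three output components, the operator is a finite sum of the bounded linear maps $\tfrac{d}{dt}:\Vc\to\Wc$, $(\cdot)_{t=0}:\Vc\to H$, $-\mathrm{Id}:U_0\to H$ and $M:\Vc\to\Yc$ (each being its own Gat\^eaux derivative; their boundedness is exactly what was invoked for the weak closedness of $\Gc$ in the proof of Lemma \ref{lem:closedness_forward_operator}) and of the two Nemytskii operators $F:X\times\Vc\to\Wc$ and $\Nc:\R^m\times\Vc\times\Theta\to\Wc$. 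Since Gat\^eaux differentiability and the asserted local boundedness of the derivative are both preserved under finite sums and pre-/post-composition with bounded linear maps, it suffices to establish both properties for $F$ and for $\Nc$ in isolation; the block form of $\Gc'$ then assembles immediately, the $u$-column collecting $\tfrac{d}{dt}-F'_u-\Nc'_u$, the $u_0$-column $-\mathrm{Id}$, and so forth.

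For $F$, I would apply Lemma \ref{lem:differentiability_general} with $\Sigma=(0,T)$, $r=2$, point spaces $A=X\times V$, $S=X\times H$, $B=W$, and function-space instances $\Ac=X\times\Vc$ (via $(\lambda,u)\mapsto[t\mapsto(\lambda,u(t))]$, so that $\lambda$ enters as a time-constant component) and $\Bc=\Wc$. The embeddings $\Ac\embed L^2((0,T);X\times V)$ and $\Ac\embed L^\infty((0,T);X\times H)$ follow from $\Vc\embed L^2(0,T;V)$ and $\Vc\embed C(0,T;H)\embed L^\infty(0,T;H)$ (the $X$-coordinate being constant in time), while $L^2((0,T);W)=\Wc$ gives the third embedding. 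The pointwise joint Gat\^eaux differentiability of $(\lambda,v)\mapsto F(t,\lambda,v)$ and the Carath\'eodory property of $F'$ are precisely the standing hypotheses; the key point is that \eqref{cond-Lipschitz} is tailored to yield \eqref{eq:differentiability_general_lipschitz_estimate}, since, modulo equivalence of product norms, its right-hand side is bounded by that of \eqref{eq:differentiability_general_lipschitz_estimate} (one uses $\max\{\|v_i\|_V\}\le\max\{\|\lambda_i\|_X+\|v_i\|_V\}$ and $\|\lambda_i\|_X\le M$). Well-definedness of the Nemytskii operator of $F$ is Lemma \ref{lem-nemytskii-F}, and that of $F'$ follows from its Carath\'eodory property together with the a priori bound \eqref{eq:abstract_gateaux_lipschitz_estimate}.

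For $\Nc$, the cleaner choice is to treat space and time jointly: take $\Sigma=(0,T)\times\Omega$, point spaces $A=S=\R^m\times\R\times\Theta$ (finite-dimensional, so the two norms coincide and \eqref{eq:differentiability_general_lipschitz_estimate} reduces to ordinary local Lipschitz continuity), $B=\R$, and the exponent $r=\max\{2,\pfix\}$. With this $r$ one has the chain $L^r\tom\embed L^2(0,T;L^\pfix(\Omega))\embed\Wc$ (H\"older in space since $r\ge\pfix$ and in time since $r\ge2$, on the bounded domains), giving $L^r(\Sigma;B)\embed\Bc=\Wc$; the embeddings $\Ac\embed L^r(\Sigma;A)$ and $\Ac\embed L^\infty(\Sigma;S)$, with $\Ac=\R^m\times\Vc\times\Theta$, use $\Vc\embed L^\infty\tom\embed L^r\tom$ and the constancy of $\alpha,\theta$. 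Because $\sigma\in C^1(\R,\R)$, the point map $(\alpha,s,\theta)\mapsto\Nc_\theta(\alpha,s)$ is $C^1$, hence locally Lipschitz, so the hypothesis of Lemma \ref{lem:differentiability_general} holds; well-definedness of the Nemytskii operators of $\Nc$ and $\Nc'$ follows from Lemma \ref{lem-nemytskii-N} (and the analogous argument for the continuous $\Nc'$), and composing with the bounded embedding $L^2(0,T;L^\pfix(\Omega))\embed\Wc$ transports differentiability into $\Wc$.

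The only genuine difficulty is this bookkeeping of function spaces: matching the mixed time--space integrability of $\Nc$ (spatial exponent $\pfix$ against the temporal exponent $2$ built into $\Wc$) to the single exponent $r$ demanded by Lemma \ref{lem:differentiability_general}, and verifying that \eqref{cond-Lipschitz} genuinely furnishes the lemma's estimate once $\lambda$ is viewed as a time-constant coordinate of the combined variable. Once the spaces are lined up, Lemma \ref{lem:differentiability_general} delivers both Gat\^eaux differentiability with the asserted pointwise derivatives $F'_\lambda,F'_u,\Nc'_\alpha,\Nc'_u,\Nc'_\theta$ and the local boundedness of $F'$ and $\Nc'$; adding the globally bounded linear blocks $\tfrac{d}{dt}$, $(\cdot)_{t=0}$, $-\mathrm{Id}$ and $M$ then gives the stated formula for $\Gc'$ and its local boundedness, completing the proof.
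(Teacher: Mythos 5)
Your proposal is correct and takes essentially the same route as the paper's proof: the identical decomposition of $\Gc$ into the bounded linear blocks $\tfrac{d}{dt}$, $(\cdot)_{t=0}$, $-\mathrm{Id}$, $M$ plus the two Nemytskii operators, followed by the same two applications of Lemma \ref{lem:differentiability_general} with the very same instantiations ($A=X\times V$, $S=X\times H$, $\Sigma=(0,T)$, $r=2$, $\Ac=X\times\Vc$, $\Bc=\Wc$ for $F$; $A=S=\R^m\times\R\times\Theta$, $B=\R$, $\Sigma=(0,T)\times\Omega$, $r=\max\{2,\pfix\}$ for $\Nc$). Your extra bookkeeping — the embedding chain $L^r((0,T)\times\Omega)\embed L^2(0,T;L^\pfix(\Omega))\embed\Wc$ and the check that \eqref{cond-Lipschitz} furnishes \eqref{eq:differentiability_general_lipschitz_estimate} once $\lambda$ is read as a time-constant coordinate — merely makes explicit details the paper leaves implicit.
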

\begin{proof}
First note that it suffices to show corresponding differentiability and local boundedness assertions for the different components of $\Gc$ given as $u \mapsto \dot{u}$, $F$, $\Nc$, $(u,u_0) \mapsto u(0) - u_0$ and $M$. For all except $F$ and $\Nc$, the corresponding assertions are immediate, hence we focus on the latter two.

Regarding $F$, this is an immediate consequence of Lemma \ref{lem:differentiability_general} with $A = X \times V$, $B = W$, $S = X \times H$, $\Sigma = (0,T)$, $r = 2$, $\Ac = X \times \Vc $ with $\|(\lambda,v)\|_\Ac = \|\lambda\|_X + \|v\|_\Vc$, $\Bc = \Wc$ and $H(t,(\lambda,v)) = F(t,\lambda,v)$. 

For $\Nc$, this is again an immediate consequence of Lemma \ref{lem:differentiability_general} with $A =S= \R^m \times \R \times \Theta$, $B = \R$, $\Sigma = (0,T) \times \Omega$, $r=\max\{2,\pfix\}$, $\Ac = \R^m \times \Vc \times \Theta$ with $\|(\alpha,v,\theta)\|_\Ac = |\alpha| + \|v\|_\Vc + |\theta|$, $\Bc = \Wc$ and $H((t,x),(\alpha,v,\theta)) = \Nc_{\theta}(\alpha,v(t,x))$.
\end{proof}

\begin{rem}\label{rem-diff-C2activation}
For stronger image spaces $W\nsupseteq L^q\om, \forall q\in[1,\infty)$, differentiability of $F$ remains valid if \eqref{cond-Lipschitz} holds, while differentiability of $\Nc$ requires a smoother activation function, e.g.,  the one suggested in Remark \ref{rem-C2activation} below.
\end{rem}

\subsection{Lipschitz continuity and the tangential cone condition} \label{sec:tcc}

In this section, we focus on showing a rather strong Lipschitz-type result for the neural network. This property allows us to apply (finite-dimensional) gradient-based algorithms to learn the neural networks, where the Lipschitz constant and its derivatives are used to determine the step size. Moreover, by this Lipschitz continuity, the tangential cone condition on \eqref{eq:standard_pid_aao_setting} can be verified. This condition, together with solvability of the learning-informed PDE, answers the important question of uniqueness of a minimizer to \commE{the limit case of} \eqref{eq:standard_pid_aao_setting}, \commE{as mention in Remark \ref{rem:unique}}.

For ease of notation, we assume in this lemma that the outer layer of the neural network has activation $\sigma$, as in the lower layers. Adapting the proof for $\sigma=\text{Id}$ in the last layer is straightforward. 

\begin{lem}[Lipschitz properties of neural networks]\label{NN-Lipschitz}

Consider an $L$-layer neural network \commA{$\Nc: \R^{m+1}\times\Theta \ni (z,\theta) \mapsto \Nc_\theta((z_1,\ldots,z_m),z_{m+1})\in\R$}, $L\in\N$ ($z$ taking the role of $(\alpha,u(t,x))$ in Lemma \ref{lem-nemytskii-N}). Denote by $\Nc^{i}_{\theta^{i}}$ the $i$ lowest layers of the neural network, depending only on $z$ and on the $i$ lowest-index pairs of parameters $\theta^{i}$, while $\Nc^0_{\theta^0}(z):=z\in\R^{m+1}$.

Fix any subset $\Bc\subseteq\R^{m+1}\times\Theta$. \commB{For each $1\leq i\leq L$, define $\Bc_i:=\{\omega^i\Nc^{i-1}_{\theta^{i-1}}(z)+\beta^i \mid (z,\theta)\in\Bc)\}$, that is, the image of the $i$-th layer before applying the activation function.} Assume that the activation function $\sigma\in\Cc^1(\R,\R)$ associated to $\mathcal{N}$ \commB{for all $1\leq i\leq L$ satisfies the Lipschitz inequalities}
\[
\left|\sigma(x) - \sigma(\tilde{x})\right| \leq C_\sigma \left|x - \tilde{x}\right|, \qquad
\left|\sigma'(x) - \sigma'(\tilde{x})\right| \leq C'_\sigma \left|x - \tilde{x}\right|
\]
for all $x$, $\tilde{x}\in\commB{\Bc_i}$ and some positive constants $C_\sigma$, $C'_\sigma$, and that $s_i:=\sup_{x\in \commB{\Bc_i}}\left|\sigma'(x)\right| < \infty$ .

Fix now a layer $l$, $1\leq l\leq L$, as well as $(\tilde{z},\theta)$, $(z,\bar{\theta})$, $(z,\hat{\theta})\in \Bc$, where $\bar{\theta}$ differs from $\theta$ only in that its $l$-th weight is replaced by some $\tilde{\omega^l}$ and $\hat{\theta}$ differs from $\theta$ only in that its $l$-th bias is replaced by some $\tilde{\beta^l}$; explicitly,
$$
(\bar{\theta}_j)_k = \begin{cases}\tilde{\omega}^l, & (j,k) = (1,l), \\ (\theta_j)_k & \text{otherwise,}\end{cases} \qquad (\hat{\theta}_j)_k = \begin{cases}\tilde{\beta}^l, & (j,k) = (2,l), \\ (\theta_j)_k & \text{otherwise.}\end{cases}
$$
Then $\Nc$ satisfies the Lipschitz estimates
\begin{align}
\begin{split}\label{lipschitz_estimates_nn}
\left|\Nc(z,\theta) - \Nc(\tilde{z},\theta)\right| & \leq (C_\sigma)^L \left(\prod_{k=1}^L\left|\omega^k\right|\right)\left|z - \tilde{z}\right|, \\
\left|\Nc(z,\theta) - \Nc(z,\bar{\theta})\right| & \leq (C_\sigma)^{L-l+1} \left(\prod_{k=l+1}^L\left|\omega^k\right|\right)\left|\Nc^{l-1}_{\theta^{l-1}}(z)\right|\left|\omega^l - \tilde{\omega}^l\right|, \\
\left|\Nc(z,\theta) - \Nc(z,\hat{\theta})\right| & \leq (C_\sigma)^{L-l+1}\left|\beta^l - \tilde{\beta}^l\right|,
\end{split}
\end{align}
while its derivatives with regards to $z$, $\omega^l$ and $\beta^l$, respectively, satisfy the Lipschitz estimates
\begin{align}
\left|\Nc'_z(z,\theta) - \Nc'_z(\commA{\tilde{z}},\theta)\right| & \leq C^z_1\left|\omega^1\right|\left|z - \tilde{z}\right|, \label{lipschitz_estimates_nn_derivative:z} \\
\left|\Nc'_{\omega^l}(z,\theta) - \Nc'_{\omega^l}(z,\bar{\theta})\right| & \leq C^{\omega^l}_l\left|\Nc^{l-1}_{\theta^{l-1}}(z)\right|\left|\omega^l - \tilde{\omega}^l\right|, \label{lipschitz_estimates_nn_derivative:w} \\
\left|\Nc'_{\beta^l}(z,\theta) - \Nc'_{\beta^l}(z,\hat{\theta})\right| & \leq C^{\beta^l}_l\left|\beta^l - \tilde{\beta}^l\right|, \label{lipschitz_estimates_nn_derivative:b}
\end{align}
where one defines $C^z_{L+1}:=C^{\omega^l}_{L+1}:=C^{\beta^l}_{L+1}:=0$ and, by \commE{backward} recursion for $1\leq i\leq L$,
\begin{align}
\begin{split}\label{lipschitz_constant_derivative}
C^z_i & := C'_\sigma (C_\sigma)^{i-1}\left(\prod_{k=i+1}^Ls_k\right)\left(\prod_{k=1}^{L}\left|\omega^k\right|\right) +
C^z_{i+1}s_i\left|\omega^{i+1}\right|, \\
C^{\omega^l}_i & := C'_\sigma (C_\sigma)^{i-l}\left(\prod_{k=i+1}^Ls_k\right)\left(\prod_{k=l+1}^{L}\left|\omega^k\right|\right)\left|\Nc^{l-1}(z,\theta^{l-1})\right| +
C^{\omega^l}_{i+1}s_i\left|\omega^{i+1}\right|, \\
C^{\beta^l}_i & := C'_\sigma (C_\sigma)^{i-l}\left(\prod_{k=i+1}^Ls_k\right)\left(\prod_{k=l+1}^{L}\left|\omega^k\right|\right) +
C^{\beta^l}_{i+1}s_i\left|\omega^{i+1}\right|.
\end{split}
\end{align}
\end{lem}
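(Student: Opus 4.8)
The plan is to establish the three function-value estimates \eqref{lipschitz_estimates_nn} first by a layerwise induction, and then to bootstrap these into the derivative estimates \eqref{lipschitz_estimates_nn_derivative:z}--\eqref{lipschitz_estimates_nn_derivative:b} by a telescoping (equivalently, backward-recursive) argument across the layers.

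\textbf{Function-value estimates.} For the estimate in $z$, I would prove by induction on $i$ that $z\mapsto\Nc^i_{\theta^{i}}(z)$ is Lipschitz with constant $(C_\sigma)^i\prod_{k=1}^i|\omega^k|$: the base case $\Nc^0=\mathrm{id}$ is immediate, and the inductive step follows from $\Nc^i_{\theta^{i}}(z)=\sigma(\omega^i\Nc^{i-1}_{\theta^{i-1}}(z)+\beta^i)$ together with the $C_\sigma$-Lipschitz bound on $\sigma$ and submultiplicativity of the norm $|\cdot|$; taking $i=L$ gives the first line. For the $\omega^l$- and $\beta^l$-estimates I would exploit that the lowest $l-1$ layers are independent of $(\omega^l,\beta^l)$, so the common value $\Nc^{l-1}_{\theta^{l-1}}(z)$ feeds both arguments; I then quantify the single-layer discrepancy introduced at layer $l$ (namely $C_\sigma|\Nc^{l-1}_{\theta^{l-1}}(z)|\,|\omega^l-\tilde\omega^l|$, resp. $C_\sigma|\beta^l-\tilde\beta^l|$) and propagate it through the remaining $L-l$ layers, each contributing a factor $C_\sigma|\omega^k|$, exactly as in the $z$-case.

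\textbf{Derivative estimates.} Writing $D^k:=\diag\bigl(\sigma'(\omega^k\Nc^{k-1}_{\theta^{k-1}}(z)+\beta^k)\bigr)$, the chain rule gives $\Nc'_z=D^L\omega^L\cdots D^1\omega^1$, while $\Nc'_{\beta^l}$ and $\Nc'_{\omega^l}$ are the same ordered products truncated below layer $l$ (ending in a terminal factor $D^l$, with the weight case additionally carrying the factor $\Nc^{l-1}_{\theta^{l-1}}(z)$). I would bound the difference of two such products, evaluated at the perturbed versus unperturbed argument, by peeling off the top layer and inducting downward from $L+1$: at each layer the undifferentiated factor $D^k$ is controlled in norm by $s_k$, whereas the \emph{single} factor in which the two arguments differ is estimated via $|D^k(z)-D^k(\tilde z)|\leq C'_\sigma|\omega^k|\,|\Nc^{k-1}_{\theta^{k-1}}(z)-\Nc^{k-1}_{\theta^{k-1}}(\tilde z)|$ and then by the function-value estimates just proved. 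Organizing this peeling yields precisely the backward recursion \eqref{lipschitz_constant_derivative}: the summand $C^z_{i+1}s_i|\omega^{i+1}|$ records the propagation of a higher-layer discrepancy through layer $i$, and the remaining summand is the fresh contribution produced when the differentiated factor sits at layer $i$. The truncated products $\prod_{k=l+1}^L$ appearing in $C^{\omega^l}_i$ and $C^{\beta^l}_i$ arise because those perturbations only influence layers $k\geq l$.

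\textbf{Main obstacle.} The function-value estimates are routine; the genuine work lies in the derivative estimates, where the Jacobian is an \emph{ordered product of matrices}, each depending on the argument through every lower layer, so that a single perturbation propagates simultaneously through all the $\sigma'$-factors. The crux is the disciplined telescoping: fixing which factors are evaluated at the perturbed and which at the unperturbed argument so that exactly one $\sigma'$-factor is ``differentiated'' (hence controlled by $C'_\sigma$ and a function-value estimate) while all the others are controlled by the suprema $s_k$, and then collecting terms so that the bookkeeping of which $|\omega^k|$-factors are grouped reproduces \eqref{lipschitz_constant_derivative}. One also needs finiteness of $s_i$ — which holds whenever $\Bc$ is bounded, by continuity of $\sigma'$ — and submultiplicativity of the norm $|\cdot|$ on weights and Jacobians.
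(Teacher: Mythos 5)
Your proposal is correct and takes essentially the same route as the paper: the paper also derives the function-value estimates by layerwise induction from the elementary bounds \eqref{sigma_lipschitz}, and your truncated ordered Jacobian products $D^L\omega^L\cdots D^i$ are precisely the paper's recursively defined quantities $A_i(z,\theta)$, whose Lipschitz constants are obtained by the same backward induction that splits each difference into a single $C'_\sigma$-controlled factor (estimated via the function-value bounds) plus a propagated higher-layer discrepancy bounded by $s_i\left|\omega^{i+1}\right|$, reproducing the recursion \eqref{lipschitz_constant_derivative}. As a minor remark, finiteness of $s_i$ is automatic even for unbounded $\Bc$, since the global $C_\sigma$-Lipschitz bound on $\sigma$ already gives $\left|\sigma'\right|\leq C_\sigma$ everywhere.
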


\begin{proof}
\commA{See Appendix \ref{appendix-proof-NN}.}
\end{proof}

\begin{rem}\label{rem-Lip}
%\begin{itemize}
%\item 
%Note that if the outer layer of the neural network is taken to be the identity mapping, one requires $A_L:=1$, $C^u_L:=C^{\omega_l}_L:=C^{\beta_l}_L:=0$; however, the induction does not change.
%\item 
\commB{If $\sigma'$ is locally Lipschitz continuous on $\R$, the existence of $C_\sigma$, $C'_\sigma$ and the $s_i$ is clear whenever $\Bc$ is a bounded set. Thus,} it is a direct consequence of Lemma \ref{NN-Lipschitz} (or follows simply by the properties of the functions $\Nc$ is composed of) that the mapping $(z,\theta) \mapsto \Nc(z,\theta)$ restricted to any bounded set is bounded, Lipschitz continuous and has Lipschitz continuous derivative. This is relevant for gradient-based optimization algorithms to solve the learning problem \eqref{eq:main_identification_aao_setting}, where Lipschitz continuity of the derivative of the objective function is a key ingredient for (local) convergence, see for instance \cite{smyrlis2004local_convergence_gradient} for a result in Hilbert spaces. 
In particular, Lipschitz continuity of $\theta \mapsto \Nc(z,\theta)$ for $z$ fixed is useful for the learning problem \eqref{eq:main_identification_aao_setting}, where the exact $(\lambda,u)$ is known. In this case, one simply learns the finite-dimensional hyperparameter $\theta$, thus standard convergence results on gradient-based methods in finite dimensional vector spaces apply, see, e.g., \cite[Section 5.3]{ruszczynski2011nonlinear}.
\end{rem}

Based on these Lipschitz estimates, we can study the tangential cone condition for the problem \eqref{eq:standard_pid_aao_setting}, given a learned $\Nt$. For this, we assume that $\Nt(\alpha,u)=\Nt(u)$.
\begin{cond}[Tangential cone condition {\cite[Expression (2.4)]{KalNeuSch08}}]\label{ass-tcc}
We say that the tangential cone condition for a mapping \commA{$G:\mathcal{D}(G)(\subseteq X)\to Y$} holds in a ball $\Bc^X_\rho(x^\dagger)$, if there exists $c_{tc}<1$ such that
\[\|G(x)-G(\tilde x)-G'(x)(x-\tilde x)\|_{\commA{X}}\leq c_{tc}\|G(x)-G(\tilde x)\|_{\commA{Y}}\qquad \forall x,\tilde  x\in \Bc_\rho(x^\dagger).\]
\commA{Here, $G'(x)h$ denotes the directional derivative \cite{TCC21}.}
\end{cond}
Analyzed in the all-at-once setting \eqref{eq:standard_pid_aao_setting}, the tangential cone condition reads as
\begin{multline}\label{tcc}
\|F(\lambda,u)-F(\tilde \lambda ,\tilde u )-F_\lambda'(\lambda,u)(\lambda-\tilde \lambda)-F_u'(\lambda,u)(u-\tilde u)  +\Nt(u)-\Nt(\tilde u)-\Nc_\theta'(u)(u-\tilde u)\|_\Wc \\
\leq c_{tc} \Bigl(
\|\dot{u}-\dot{\tilde u}-F(\lambda,u)+F(\tilde \lambda,\tilde u)-\Nt(u)+\Nt(\tilde u)\|_\Wc^2 +\|u(0)-u^\dagger(0)-u_0+\tilde u_0\|_{U_0}^2
+\|M(u-\tilde u)\|_\Yc^2\Bigr)^{1/2}
\end{multline}
for all $(\lambda,u_0,u), (\tilde \lambda,\tilde u_0,\tilde u)\in B_\rho^{X \times U_0\times\Vc}(\lambda^\dagger,u_0^\dagger,u^\dagger)$, \commA{ where $F'$ and $\Nc' $ are the \commB{G\^ateaux} derivatives}.

%\begin{ass}[Tangential cone condition]\label{ass-tcc}
%The tangential cone condition for $G(x)=y$, which is solvable at $\tilde{x}\in\Bc_\rho(x^\dagger)$ %\todo{in  $Bc_\rho(x^\dagger)$?} \commA{yes. added!}, 
%of the original form \cite[Expression (2.4)]{KalNeuSch08}
%\[\|G(x)-G(x^\dagger)-G'(x)(x-x^\dagger)\|\leq c_{tc}\|G(x)-G(x^\dagger)\|\qquad c_{tc}<1,\forall x\in \Bc_\rho(x^\dagger)\]
%analyzed in the all-at-once setting \eqref{eq:standard_pid_aao_setting} reads as
%\begin{align}\label{tcc}
%&\|F(\lambda,u)-F(\lambda^\dagger ,u^\dagger )-F_\lambda'(\lambda,u)(\lambda-\lambda^\dagger)-F_u'(\lambda,u)(u-u^\dagger) \nonumber\\
%&\qquad+\Nt(u)-\Nt(u^\dagger)-\Nc_u'(u)(u-u^\dagger)\|_\Wc \nonumber\\
%&\leq c_{tc} \Bigl(
%\|\dot{u}-\dot{u}^\dagger-F(\lambda,u)+F(\lambda^\dagger,u^\dagger)-\Nt(u)+\Nt(u^\dagger)\|_\Wc^2 \\\
%&\qquad+\|u(0)-u^\dagger(0)-u_0+u^\dagger_0\|_{U_0}^2
%+\|M(u-u^\dagger)\|_\Yc^2\Bigr)^{1/2} \nonumber\\
%&\hspace{6cm} c_{tc}<1,\forall (\lambda,u_0,u)\in B_\rho^{X \times U_0\times\Vc}(\lambda^\dagger,u_0^\dagger,u^\dagger),\nonumber
%\end{align}
%where $\Bc_\rho(x^\dagger)$ is a ball of radius $\rho$ and center $x^\dagger$. Here, $F$ only needs to be the Gateaux derivative; the same is assumed for $\Nc$.
%\end{ass}
The tangential cone condition strongly depends on the PDE model $F$ and the architectures of $\Nc$. By triangle inequality, a sufficient condition for \eqref{tcc} to hold is that the tangential cone condition holds for $F$ and for $\Nc$ separately. %, see \cite{TCC21} for a detail study of this condition on different nonlinear PDE models $F$. 
The \commB{tangential} cone condition in combination with solvability of equation $G(x)=0$ ensures uniqueness of a minimum-norm solution \cite[Proposition 2.1]{KalNeuSch08} \commA{(see Appendix \ref{appendix-Roubicek})}. Solvability of the operator equation $G(x)=0$, according to the all-at-once formulation, is the question of solvability of the learning-informed PDE and exact measurements, i.e. $\delta=0$.  For solvability of the learning-informed PDE, we refer to Proposition \ref{prop-ex0-PDEexistence} in Section \ref{sec:application}. %\todo{don't we mix here different notations of solvability of the pde: once the parameters are fixed, once not. also the measurement operator is involved only in one case.}. 
In the following, we focus on the tangential cone condition for the neural networks \commA{by studying Condition \ref{ass-tcc} for $G:=\Nt.$}

\begin{lem}[Tangential cone condition for neural networks]\label{tcc-nn}
The tangential cone condition in Condition \ref{ass-tcc} for \commA{$G = \Nc_\theta\commB{: \Vc\to\Wc}$ with fixed parameter $\theta$} holds in any ball $\B^\Vc_\rho (u^\dagger) $ if $M=\text{Id},$ $ Y\embed L^\pfix(\Omega)$ with $\pfix>0$ as in \eqref{eq:ass:static_space_embeddings}, $\sigma\in\Cc^1(\R,\R)$ and $\rho$, depending on the Lipschitz constant in Lemma \ref{NN-Lipschitz} is sufficiently small.
\end{lem}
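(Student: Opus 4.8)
The plan is to verify the tangential cone condition \eqref{tcc} specialized to the neural-network part, i.e.\ with $F\equiv 0$, as is consistent with the preceding discussion that splits \eqref{tcc} into an $F$-part and an $\Nc$-part. The key structural observation I would exploit first is that, among the constituents of the forward operator, only $\Nc_\theta$ is nonlinear: the maps $u\mapsto\dot u$, $(\cdot)_{t=0}$ and $M$ are (affine-)linear, so their second-order Taylor remainders vanish. Consequently the left-hand side of \eqref{tcc} collapses to $\|\Nc_\theta(u)-\Nc_\theta(\tilde u)-\Nc_\theta'(u)(u-\tilde u)\|_\Wc$, while, using $M=\text{Id}$, the right-hand side is bounded below by $\|M(u-\tilde u)\|_\Yc=\|u-\tilde u\|_\Yc$. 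Hence it suffices to produce a constant $c_{tc}(\rho)<1$ with
\[
\|\Nc_\theta(u)-\Nc_\theta(\tilde u)-\Nc_\theta'(u)(u-\tilde u)\|_\Wc \le c_{tc}(\rho)\,\|u-\tilde u\|_\Yc
\qquad\text{for all }u,\tilde u\in\B^\Vc_\rho(u^\dagger).
\]

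Next I would estimate the remainder pointwise a.e.\ on $\tom$. Writing the integral form of Taylor's theorem,
\[
\Nc_\theta(u)-\Nc_\theta(\tilde u)-\Nc_\theta'(u)(u-\tilde u)
=\int_0^1\big(\Nc_\theta'(\tilde u+s(u-\tilde u))-\Nc_\theta'(u)\big)(u-\tilde u)\wrt s,
\]
and applying the Lipschitz estimate \eqref{lipschitz_estimates_nn_derivative:z} for $\Nc_\theta'$ from Lemma \ref{NN-Lipschitz} (valid since $\Vc\embed L^\infty\tom$ confines the values of $u,\tilde u$ to a fixed bounded set, on which the suprema $s_i$ of Lemma \ref{NN-Lipschitz} are finite), the integrand is bounded by $C_{Lip}(1-s)|u-\tilde u|\,|u-\tilde u|$. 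Integrating $\int_0^1(1-s)\wrt s=\tfrac12$ then yields the pointwise quadratic bound $|\Nc_\theta(u)-\Nc_\theta(\tilde u)-\Nc_\theta'(u)(u-\tilde u)|\le \tfrac{C_{Lip}}{2}|u-\tilde u|^2$, where $C_{Lip}$ is the Lipschitz constant of the scalar derivative furnished by Lemma \ref{NN-Lipschitz}.

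I would then convert this into the required norm inequality by a two-sided use of the embeddings. Extracting one factor via $|u-\tilde u|^2\le\|u-\tilde u\|_{L^\infty\tom}|u-\tilde u|$ and using $L^\pfix(\Omega)\embed W$ on the image side gives
\[
\|\Nc_\theta(u)-\Nc_\theta(\tilde u)-\Nc_\theta'(u)(u-\tilde u)\|_\Wc
\le C_{L^\pfix\to W}\,\tfrac{C_{Lip}}{2}\,\|u-\tilde u\|_{L^\infty\tom}\,\|u-\tilde u\|_{L^2(0,T;L^\pfix(\Omega))}.
\]
Finally I would bound $\|u-\tilde u\|_{L^\infty\tom}\le 2\rho\,C_{\Vc\to L^\infty}$ (both arguments lie in the ball of radius $\rho$) and, invoking the hypothesis $Y\embed L^\pfix(\Omega)$, bound $\|u-\tilde u\|_{L^2(0,T;L^\pfix(\Omega))}\le C_{Y\to L^\pfix}\|u-\tilde u\|_\Yc$. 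Collecting constants yields $c_{tc}(\rho)=C_{L^\pfix\to W}\,C_{\Vc\to L^\infty}\,C_{Y\to L^\pfix}\,C_{Lip}\,\rho$, which is $<1$ once $\rho$ is small enough, completing the argument.

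The step I expect to be the main obstacle — and the conceptual heart of the proof — is recognizing why the condition holds at all: a neural network need not be bounded below, so the term $\|\Nc_\theta(u)-\Nc_\theta(\tilde u)\|$ cannot by itself dominate the remainder. It is precisely the linear, fully observed term $\|M(u-\tilde u)\|_\Yc=\|u-\tilde u\|_\Yc$, which requires the hypothesis $M=\text{Id}$, that supplies a coercive lower bound against which the quadratically small remainder can be balanced by shrinking $\rho$. The remaining delicacy is the norm bookkeeping, where the two embeddings must be used in opposite directions — $L^\pfix(\Omega)\embed W$ to weaken the image norm on the left, and $Y\embed L^\pfix(\Omega)$ to relate the observation norm on the right to the $L^\pfix$-scale on which the pointwise estimate naturally lives.
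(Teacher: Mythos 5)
Your proposal is correct and takes essentially the same route as the paper's proof: the same integral-form Taylor remainder, the same derivative Lipschitz estimate \eqref{lipschitz_estimates_nn_derivative:z} from Lemma \ref{NN-Lipschitz} (applicable on the bounded range furnished by $\Vc\embed L^\infty\tom$), and the identical two-sided use of the embeddings $L^\pfix(\Omega)\embed W$ and $Y\embed L^\pfix(\Omega)$, yielding the same constant $c_{tc}=\rho\, C_{\Vc\to L^\infty\tom}\,C_{Y\to L^\pfix(\Omega)}\,C_{L^\pfix(\Omega)\to W}\,C^z_1|\omega^1|$ and the same smallness condition on $\rho$. Your preliminary reduction — observing that the linear constituents have vanishing second-order remainder and that $M=\text{Id}$ supplies the lower bound $\|u-\tilde u\|_\Yc$ on the right-hand side of \eqref{tcc} — merely makes explicit what the paper leaves implicit in the discussion preceding the lemma.
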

\begin{proof}
Since $\Vc\embed L^\infty((0,T)\times\Omega)$ for $u, \tilde u \in\Bc_\rho^\Vc(u^\dagger)$, we have for almost all $(t,x)\in(0,T)\times\Omega$ that $u(t,x), \tilde  u(t,x)\in\Bc$ for some $\Bc$ bounded. Thus, we can use Lemma \ref{NN-Lipschitz}
with such a $\Bc$, and in particular the estimate \eqref{lipschitz_nn:z} for $z=u(t,x)$, to obtain
\begin{align*}
&\|\Nt(u)-\Nt( \tilde u)-\Nt'(u)(u- \tilde u)\|_\Wc =\left\|\int_0^1 \Nt'( \tilde u+\mu(u- \tilde u))\,d\mu(u- \tilde u)-\Nt'(u)(u- \tilde u)\right\|_\Wc\\
&\quad\leq C_{L^\pfix\to W}\left\|\int_0^1 \left(\Nt'( \tilde u+\mu(u- \tilde u))-\Nt'(u)\right)d\mu(u- \tilde u)\right\|_{L^2(0,T;L^\pfix(\Omega))}\\
&\quad\leq C_{L^\pfix\to W}C^z_1|\omega_1|\left\|\int_0^1 (1-\mu)\,d\mu|u- \tilde u|^2\right\|_{L^2(0,T;L^\pfix(\Omega))}\\
&\quad \leq (1/2)C_{Y\to L^\pfix(\Omega)}C_{L^\pfix\to W} C^z_1|\omega_1|\|u- \tilde u\|_{L^\infty((0,T)\times\Omega)}\|u- \tilde u\|_{\Yc}\\
&\quad \leq \rho\, C_{\Vc\to L^\infty((0,T)\times\Omega)} C_{Y\to L^\pfix(\Omega)}C_{L^\pfix(\Omega)\to W}  C^z_1|\omega_1|\|u- \tilde u\|_{\Yc}=: c_{tc}\|u- \tilde u\|_{\Yc}\\
&\quad= \commB{c_{tc}\|M(u- \tilde u)\|_{\Yc}}
\end{align*}
where $C^z_1|\omega_1|$ is the Lipschitz constant of $\Nc'_u$ derived in Lemma \ref{NN-Lipschitz}, and $c_{tc}<1$ if \[\rho<1/\left(C_{\Vc\to L^\infty((0,T)\times\Omega)} C_{Y\to L^\pfix(\Omega)}C_{L^\pfix(\Omega)\to W}  C^z_1|\omega_1|\right).\]
\commB{We note that having full observation, i.e. $M=\text{Id}$, is crucial for establishing the tangential cone condition, as it allows us to link the estimate from $\|u- \tilde u\|_\Yc$ to $\|M(u- \tilde u)\|_\Yc$, yielding the last quantity on the right hand side of \eqref{tcc}. The necessity of full observation has also been mentioned in \cite{TCC21}.} 
\qedhere 
\end{proof}
%\note{adapted to use general setting, original version is in comments...}
Now using \cite[Proposition 2.1]{KalNeuSch08} together with Lemma \ref{tcc-nn}, a uniqueness result follows.
\begin{prop}[Uniqueness of minimizer for the \commE{limit case of} \eqref{eq:standard_pid_aao_setting}]\label{unique-min}
\commB{With $\nu\geq1$}, consider the regularizer \commB{$\Rc_1=\|\cdot\|^\nu_{X\times\R^m \times U_0\times \Vc}$}, and assume that the conditions in Lemma \ref{tcc-nn} are satisfied. Moreover, suppose that the tangential cone condition for $F$ holds in $\Bc_\rho^{X\times U_0\times \Vc}(\lambda^\dagger,u_0^\dagger,u^\dagger)$ and the equation $\Gc(\lambda,u_0,u,\hat{\theta}) = 0$ with $\Gc$ in \eqref{eq:forward_operator_general} and $\theta $ fixed is solvable in $\Bc_\rho^{X\times U_0\times\Vc}(\lambda^\dagger,u_0^\dagger,u^\dagger)$. 
Then the \commE{limit case of the} parameter identification problem \eqref{eq:standard_pid_aao_setting} admits a unique minimizer in the ball $\Bc_\rho^{X\times U_0\times\Vc}(\lambda^\dagger,u_0^\dagger,u^\dagger)$.
%\todo{agree?}
\end{prop}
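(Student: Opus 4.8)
The plan is to reduce the assertion to the abstract uniqueness result \cite[Proposition 2.1]{KalNeuSch08}: once the full forward operator satisfies the tangential cone condition on a ball and the corresponding operator equation is solvable there, that equation possesses a unique minimum-norm solution. Since the regularizer is $\Rc=\|\cdot\|$, this minimum-norm solution is precisely the unique minimizer of \eqref{eq:standard_pid_aao_setting} in the exact-data situation underlying the minimum-norm concept. Solvability of the (exact-data) operator equation $\Gc(\lambda,u_0,u,\hat\theta)=(0,0,y)$ (equivalently $G(x)=0$ for $G:=\Gc(\cdot)-(0,0,y)$) in $\Bc^{X\times U_0\times\Vc}_\rho(\lambda^\dagger,u_0^\dagger,u^\dagger)$ is assumed, so the only real work is to verify the tangential cone condition \eqref{tcc} for $\Gc$ (with $\theta=\hat\theta$ fixed and $\alpha$ absent, since $\Nc_\theta(\alpha,u)=\Nc_\theta(u)$) on a possibly smaller ball.

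Write $x=(\lambda,u_0,u)$, $\tilde x=(\tilde\lambda,\tilde u_0,\tilde u)$, and split the left-hand side of \eqref{tcc} by the triangle inequality into the $F$-remainder and the $\Nc_\theta$-remainder. For the network term, Lemma \ref{tcc-nn} applies under the present hypotheses ($M=\text{Id}$, $Y\embed L^\pfix(\Omega)$, $\sigma\in\Cc^1(\R,\R)$) and yields
\[
\|\Nt(u)-\Nt(\tilde u)-\Nc'_\theta(u)(u-\tilde u)\|_\Wc\leq c^{\Nc}_{tc}\,\|u-\tilde u\|_\Yc ,
\]
with $c^{\Nc}_{tc}\to 0$ as $\rho\searrow 0$ by the explicit constant in \eqref{lipschitz_estimates_nn_derivative:z}. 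Because $M=\text{Id}$, the factor $\|u-\tilde u\|_\Yc=\|M(u-\tilde u)\|_\Yc$ is one of the summands on the right-hand side of \eqref{tcc}, so this term is directly controlled by the full residual $\|\Gc(x)-\Gc(\tilde x)\|$. The $F$-remainder is handled by the assumed tangential cone condition for $F$, which bounds it by $c^F_{tc}$ times the residual of the $F$-only operator $G_F(x)=(\dot u-F(\lambda,u),\,u(0)-u_0,\,Mu)$.

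The main obstacle is controlling the constants so that their sum stays strictly below one. Since $G_F(x)-G_F(\tilde x)=\Gc(x)-\Gc(\tilde x)+(\Nt(u)-\Nt(\tilde u),0,0)$, re-expressing the $F$-residual through the full residual costs a factor $1+C_{Lip}$, where $C_{Lip}$ is the fixed, finite Lipschitz constant of $u\mapsto\Nt(u)$ from Remark \ref{rem-Lip}, again using $\|\Nt(u)-\Nt(\tilde u)\|_\Wc\leq C_{Lip}\|u-\tilde u\|_\Yc\leq C_{Lip}\|\Gc(x)-\Gc(\tilde x)\|$. The total constant is therefore of the form $c^F_{tc}(1+C_{Lip})+c^{\Nc}_{tc}(\rho)$, so one genuinely needs $c^F_{tc}$ small enough that $c^F_{tc}(1+C_{Lip})<1$, and then shrinks $\rho$ so that $c^{\Nc}_{tc}(\rho)<1-c^F_{tc}(1+C_{Lip})$. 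Shrinking the ball is harmless, as both the tangential cone condition for $F$ (a pointwise condition on pairs) and solvability persist on sub-balls containing the solution.

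With \eqref{tcc} established for $\Gc$ with constant below one and solvability assumed, \cite[Proposition 2.1]{KalNeuSch08} yields the unique minimum-norm solution, i.e.\ the unique minimizer of \eqref{eq:standard_pid_aao_setting} for $\Rc=\|\cdot\|$ in the ball. One point to settle in the write-up is the ambient geometry: the minimum-norm characterization behind \cite[Proposition 2.1]{KalNeuSch08} is cleanest in Hilbert spaces, so one should either specialize $X\times U_0\times\Vc$ to a Hilbert space or invoke strict convexity of the norm to single out a unique minimal-norm element among the local solutions.
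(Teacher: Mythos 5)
Your proposal matches the paper's own argument, which establishes this proposition exactly as you do: split the tangential cone condition \eqref{tcc} by the triangle inequality into the $F$-remainder (covered by the assumed condition on $F$) and the network remainder (covered by Lemma \ref{tcc-nn}, whose constant scales with $\rho$), then combine solvability of $\Gc(\lambda,u_0,u,\hat\theta)=0$ with \cite[Proposition 2.1]{KalNeuSch08} to get the unique minimum-norm solution, identified with the minimizer of \eqref{eq:standard_pid_aao_setting} for $\Rc=\|\cdot\|$. Your extra bookkeeping --- the $(1+C_{Lip})$ factor when re-expressing the $F$-only residual through the full residual, the requirement $c^F_{tc}(1+C_{Lip})+c^{\Nc}_{tc}(\rho)<1$ achieved by shrinking $\rho$, and the observation that the minimum-norm characterization requires a Hilbert-space (or strictly convex norm) setting --- makes explicit constant-tracking that the paper leaves implicit in its one-line proof, and is a sound refinement rather than a different route.
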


\begin{rem}\label{lem-TCC}
We refer to Section \eqref{sec:application} below for solvability of the learning-informed PDE in an application. We refer to \cite{TCC21} for concrete choices of $F$ and of function space settings such that the tangential cone condition can be verified.

Note that, while the tangential cone condition for \commE{limit case of the} of the parameter identification problem \eqref{eq:standard_pid_aao_setting} can be confirmed as above, the same question for the learning problem \eqref{eq:main_identification_aao_setting} remains open.
\end{rem}

%\begin{prop}[Uniqueness of minimizer for \eqref{eq:standard_pid_aao_setting}]\label{unique-min}
%Given the regularizer $\Rc=\|\cdot\|$ and assume that conditions in Lemma \ref{tcc-nn} are satisfied. Moreover, suppose that the tangential cone condition for $F$ holds in $\Bc_\rho^{X\times U_0\times \Vc}(\lambda^\dagger,u_0^\dagger,u^\dagger)$ and the PDE \eqref{app} is solvable in $\Bc_\rho^{X\times U_0\times\Vc}(\lambda^\dagger,u_0^\dagger,u^\dagger)$. 
%Then, the parameter identification problem \eqref{eq:standard_pid_aao_setting} for the application \eqref{app} admits a unique minimizer in the ball $\Bc_\rho^{X\times U_0\times\Vc}(\lambda^\dagger,u_0^\dagger,u^\dagger)$.
%\end{prop}
%\begin{proof}
%\cite[Proposition 2.1]{KalNeuSch08}, where solvability of \eqref{app} is proven in Proposition \ref{prop-ex0-PDEexistence} and tangential cone condition for $\Nt$ is shown in Lemma \ref{tcc-nn}.
%\end{proof}

%\begin{rem}\label{lem-TCC}
%\begin{itemize}
%\item 
%In the application \eqref{app}, the tangential condition for $F$  naturally holds if the unknowns are $u_0$ and $\lambda=\varphi$ due to linearity. If $\lambda=c,a$, $F$ becomes bilinear in $(\lambda,u)$; more care on the choice of $X$ and $\Yc$ is required (see, e.g.\cite{TCC21}).
%\item
%While the tangential cone condition for the parameter identification problem \eqref{eq:standard_pid_aao_setting} can be confirmed as above, the same question for the learning problem \eqref{eq:main_identification_aao_setting} remains open.
%\end{itemize}
%
%\end{rem}

\section{Application}\label{sec:application}
In this section, as special case of the dynamical system \eqref{ori-eq-1}, we examine a class of general parabolic problems given as
\begin{alignat}{3}
& \dot{u} - \nabla\cdot(a\nabla u) + cu - f(\commA{\alpha,u}) = \varphi \quad&&\mbox{ in }\Omega\times\ti,\nonumber\\
& u|_{\partial\Omega}=0 && \mbox{ in } \ti, \label{ex0}\\
& u(0) = u_0 &&\mbox{ in }\Omega,\nonumber
\end{alignat}
where \commB{$\Omega\subset\R^d$} is a bounded $C^2$-class domain, with $ d\in \{1,2,3 \}$ \commB{being relevant in practice}. The nonlinearity $f$, \commA{which can be replaced by a neural network later, 
is assumed to be given as the Nemytskii operator $f:\R^m\times\Vc\to\Wc$ \cite[Section 1.3]{Roubicek} of a pointwise function $f:\R^m\times\R\to\R$, making use of the notation $ [f(\alpha,u)](t,x) = f(\alpha,u(t,x))$.} %\note{ and $f(\alpha,\cdot):\R \rightarrow \R$ is sufficiently smooth such that the induced Nemytskii mappings as used in this section are continuous. OK like this?} \commA{added here and below} \commA{flipped sign of f}
 We initially work with the following parameter spaces
\begin{equation}\label{eq:application_paramter_spaces}
\varphi\in X_\varphi := H^{-1}\om, \quad c\in X_c :=  L^2\om,\quad a\in X_a:=  W^{1,Pa}\om \quad u_0\in U_0:=  H^2(\Omega),
\end{equation}
where $Pa >d$, and, for existence of a solution, we will require the constraints
\begin{equation} \label{application_general_diffusion_constraint}
0<\underline{a}\leq a(x)\leq \overline{a}\ \quad \text{for a.e. } x\in \Omega.
\end{equation}  
Thus, the overall parameter space $X$ is given as $X=(X_\varphi,X_c,X_a)$. %, and the constraint \eqref{application_general_diffusion_constraint} needs to be included in the regularization functional $\Rc_1$.%\todo{adopt this notation consistently in the paper} \commA{done}

\subsection{\commB{Unique existence results for \eqref{ex0}}}\label{sec:unique-existence}
Our next goal is to study unique existence of \eqref{ex0}. The main purpose of this is to inspire a relevant choice of function space setting for the all-at-once setting of \eqref{eq:main_identification_aao_setting} and \eqref{eq:standard_pid_aao_setting}, even \commA{though} unique existence is not required there. Also, a unique existence result is of interest for studying the reduced setting, where well-definedness of the parameter-to-state map is needed.

We will proceed in two steps: In the first step, we prove that \eqref{ex0} admits a unique solution \[u\in W^{1,\infty,\infty}(0,T;L^2\om,L^2\om)\cap W^{1,\infty,2}(0,T;H^1_0\om,H^1_0\om)\]
with $W^{1,p,q}(0,T;V_1,V_2):=\{u\in L^p(0,T;V_1):\dot{u}\in L^q(0,T;V_2)\}$. Then, in the second step, we lift the regularity of $u$ to the somewhat stronger space \[u\in L^\infty\tom\] to achieve boundedness in time and space of the solution, which will later serve our purpose of working with a neural network acting pointwise. \commA{It is worth noting that the study for unique existence is carried out first of all for classes of general nonlinearity $f$ satisfying some specific assumptions, such as pseudomonotonicity and growth condition, see Lemmas \ref{prop:step1} and \ref{prop:step2} below. The nonlinearity $f$ as a neural network will then be considered in Proposition \ref{prop-ex0-PDEexistence}, Remark \ref{rem-Lipactivation}.}

Before investigating \eqref{ex0}, we summarize the unique existence theory as provided in \cite[Theorems 8.18, 8.31]{Roubicek} 
%, which shall benefit our first step mentioned above. This theorem allows for time-dependent source terms; in this section, however, we present the theorem while skipping the time-dependence of $\varphi$. Apart from that, in comparison to the abstract analysis presented in Section \ref{sec-abstractPI}, we here consider the autonomous $\Ftil$.
for the autonomous case.

\begin{thm} \label{theo-Roubicek}  Let $\Vf$ be a Banach space, $\Hf$ be a Hilbert spaces and assume that for  $\Ff:\Vf\to \Vf^*$, $u_0 \in \Hf$ and $\varphi \in \Vf^*$, with the Gelfand triple $\Vf\subseteq \Hf\cong \Hf^* \subseteq \Vf^*$, the following holds:
\begin{enumerate}[label=S\arabic*., ref=S\arabic*]
\item \label{ex0-S-pseu}
$\Ff$ is pseudomonotone. %, i.e. $\Ftil$ is bounded and
%\begin{align*}	
%\left.\begin{aligned}
%u_k \overset{V}{\rightharpoonup} u&\\
%\underset{k\rightarrow\infty}{\limsup} \la \Ftil(u_k),u_k-u\ra_{V^*,V} \leq 0& \\
%\end{aligned}\right\}
%\Rightarrow
%\begin{cases}
%\forall v \in V: \la \Ftil(u),u-v\ra_{V^*,V} \\ \leq \underset{k\rightarrow\infty}{\liminf}\la \Ftil(u_k),u_k-v\ra_{V^*,V}.
%\end{cases}
%\end{align*}

\item \label{ex0-S-coe}
$\Ff$ is semi-coercive, i.e,
\[ \forall v \in \Vf: \la \Ff(v),v\ra_{\Vf^*,\Vf} \geq c_0|v|^2_\Vf -c_1|v|_\Vf-c_2\|v\|_{\Hf}^2 \]
for some $c_0 > 0$ and some seminorm $|.|_{\Vf}>0$ satisfying $\forall v \in \Vf: \|v\|_{\Vf} \leq c_{|.|}(|v|_{\Vf} + \|v\|_{\Hf})$.
\item \label{ex0-S-reg}
$\Ff$, $u_0$ and $\varphi$ satisfy the regularity condition $\Ff(u_0)-\varphi\in \Hf$, $u_0 \in \Vf$ and
%\begin{alignat*}{3}	
%& u_0\in V \quad\text{such that}\quad \Ftil(u_0)-\varphi\in H\\
%& \la \Ftil(u)-\Ftil(v), u-v\ra_{V^*,V}\geq  C_0|u-v|^2_V -C_2\|u-v\|_H^2
%\end{alignat*}
\[ \la \Ff(u)-\Ff(v), u-v\ra_{\Vf^*,\Vf}\geq  C_0|u-v|^2_{\Vf} -C_2\|u-v\|_{\Hf}^2
\]
for all $u,v \in V$ with some $C_0>0.$
\end{enumerate}
Then the abstract Cauchy problem
\[ 
\dot{u}(t) + \Ff(u(t)) = \varphi \, \qquad u(0) = u_0 
\]
has a unique solution $u \in W^{1,\infty,\infty}(0,T;\Hf,\Hf)\cap W^{1,\infty,2}(0,T;\Vf,\Vf)$.
\end{thm}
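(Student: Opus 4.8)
The plan is to obtain existence by a Galerkin scheme and uniqueness directly from the monotonicity estimate in \ref{ex0-S-reg}, so that the approximation machinery is only needed for existence. Since $\Vf$ embeds densely into the separable space $\Hf$, I would fix a Galerkin basis $(w_i)_i$ with the span of $(w_i)_i$ dense in $\Vf$, set $\Vf_n := \vspan\{w_1,\dots,w_n\}$, and seek $u_n(t)=\sum_{i=1}^n g_i^n(t)w_i$ solving the projected problem $\la \dot u_n(t)+\Ff(u_n(t)),w_i\ra_{\Vf^*,\Vf}=\la\varphi,w_i\ra_{\Vf^*,\Vf}$ for $i=1,\dots,n$, with $u_n(0)$ a suitable projection of $u_0$ converging to $u_0$ in $\Vf$. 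Carath\'eodory's theorem gives local solvability of this finite-dimensional ODE system, and the a priori bounds below promote these to solutions on all of $[0,T]$.

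For the a priori estimates I would first test the projected equation with $u_n$ and use the Gelfand-triple identity $\la \dot u_n,u_n\ra=\tfrac12\tfrac{d}{dt}\|u_n\|_{\Hf}^2$ together with the semi-coercivity \ref{ex0-S-coe}; a Gronwall argument and the seminorm-to-norm equivalence of \ref{ex0-S-coe} then yield uniform bounds in $L^\infty(0,T;\Hf)\cap L^2(0,T;\Vf)$. The stronger regularity in the conclusion is where \ref{ex0-S-reg} and the compatibility $\Ff(u_0)-\varphi\in\Hf$ enter: I would apply the time-difference quotient $d_hu_n(t):=\tfrac1h(u_n(t+h)-u_n(t))$, which, since $\varphi$ is time-independent, satisfies $\dot{(d_hu_n)}+d_h\Ff(u_n)=0$, and test with $d_hu_n$. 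The strong monotonicity in \ref{ex0-S-reg} bounds $\la d_h\Ff(u_n),d_hu_n\ra$ from below by $C_0|d_hu_n|_{\Vf}^2-C_2\|d_hu_n\|_{\Hf}^2$, and the point of the compatibility condition is precisely that $\|d_hu_n(0)\|_{\Hf}$ stays bounded as $h\to0$, since the initial time derivative formally equals $\varphi-\Ff(u_0)\in\Hf$. After Gronwall and sending $h\to0$ this bounds $\dot u_n$ in $L^\infty(0,T;\Hf)\cap L^2(0,T;\Vf)$; reading $\Ff(u_n)=\varphi-\dot u_n$ back into the semi-coercivity then bounds $u_n$ in $L^\infty(0,T;\Vf)$, matching the claimed space $W^{1,\infty,\infty}(0,T;\Hf,\Hf)\cap W^{1,\infty,2}(0,T;\Vf,\Vf)$.

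The step I expect to be the main obstacle is passing to the limit in the nonlinear term. By reflexivity and the above bounds I would extract a subsequence with $u_n\rightharpoonup u$, $\dot u_n\rightharpoonup\dot u$, and $\Ff(u_n)\rightharpoonup\xi$ in the appropriate Bochner spaces; the linear terms pass immediately, giving $\dot u+\xi=\varphi$ and $u(0)=u_0$, so everything reduces to identifying $\xi=\Ff(u)$. This is exactly the role of pseudomonotonicity \ref{ex0-S-pseu}: using the energy identity one shows $\limsup_n\int_0^T\la\Ff(u_n),u_n-u\ra\le0$, after which the integrated (Lions-type) form of the pseudomonotone inequality yields $\int_0^T\la\Ff(u),u-v\ra\le\int_0^T\la\xi,u-v\ra$ for all admissible test functions $v$, forcing $\xi=\Ff(u)$. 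The delicate points here are justifying the energy identity for the limit, via weak lower semicontinuity of $\|\cdot\|_{\Hf}$ at the endpoint, and lifting pointwise pseudomonotonicity to the time-integrated setting.

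Uniqueness I would handle separately and cleanly, without the approximation: for two solutions $u,v$ I subtract the equations and test with $u-v$, obtaining $\tfrac12\tfrac{d}{dt}\|u-v\|_{\Hf}^2+C_0|u-v|_{\Vf}^2\le C_2\|u-v\|_{\Hf}^2$ from \ref{ex0-S-reg}; since $(u-v)(0)=0$, Gronwall's inequality forces $u\equiv v$. Overall, the genuinely hard part is the combination of the higher-order difference-quotient estimate with the limit passage under only pseudomonotonicity, as both hinge on the careful compatibility assumption in \ref{ex0-S-reg} rather than on any global smoothness or gradient structure of $\Ff$.
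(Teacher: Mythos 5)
Your proposal is mathematically sound, but note first that the paper does not prove this theorem at all: it is imported from \cite{Roubicek} (Theorems 8.18 and 8.31), as the text immediately preceding the statement says, so the relevant comparison is with the textbook proof rather than with anything in the paper itself. Your reconstruction has the same skeleton as the source: an approximation scheme, testing with the solution plus \ref{ex0-S-coe} and Gronwall for the basic $L^\infty(0,T;\Hf)\cap L^2(0,T;\Vf)$ bound, a time-increment estimate powered by the strong-monotonicity inequality of \ref{ex0-S-reg} with the compatibility condition $\Ff(u_0)-\varphi\in\Hf$ controlling the increment at $t=0$, identification of the nonlinear weak limit through pseudomonotonicity, and uniqueness by subtracting two solutions, testing with the difference and applying Gronwall (that last step is essentially Roubíček's Theorem 8.31). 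The one genuine divergence is the choice of approximation: you use Galerkin, whereas the textbook regularity proof runs through Rothe's method (implicit time discretization), and this is not cosmetic at exactly the step you yourself flag as delicate. In the Galerkin scheme the initial bound reads $\|\dot u_n(0)\|_{\Hf}=\|P_n(\varphi-\Ff(u_n(0)))\|_{\Hf}$, which requires $P_n$ to be the $\Hf$-orthogonal projection and, more seriously, control of $\Ff(u_n(0))$ in $\Hf$; but the hypothesis only gives $\Ff(u_0)-\varphi\in\Hf$ and says nothing about $\Ff$ at nearby points, so ``$u_n(0)\to u_0$ in $\Vf$'' alone does not close this step without an additional continuity assumption or a careful construction of $u_n(0)$. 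Rothe's scheme sidesteps the issue because the recursion starts from $u_0$ itself, so the compatibility condition bounds the first discrete increment $(u_1-u_0)/\tau$ in $\Hf$ directly, after which the discrete analogue of your difference-quotient estimate proceeds as you describe. In short: correct strategy and correct use of each hypothesis (semi-coercivity for the energy bound, compatibility plus strong monotonicity for the $W^{1,\infty,\infty}\cap W^{1,\infty,2}$ regularity, pseudomonotonicity only for the limit passage), with one technical soft spot at $t=0$ that is precisely why the cited proof discretizes in time rather than in space.
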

By verifying the conditions in Theorem \ref{theo-Roubicek}, we now obtain unique existence as follows.

\begin{lem}[\commB{Unique existence}] \label{prop:step1} Let  the nonlinearity $f(\alpha,\cdot):H_0^1(\Omega) \rightarrow H_0^1(\Omega)^*$ be given as the Nemytskii mapping of a measurable function $f(\alpha,\cdot) :\R \rightarrow \R$ that satisfies
\begin{equation}\label{ex0-monotone}
\begin{split}
%\text{\commA{assumption:}} \quad 
&(-f(\alpha,\cdot)):H_0^1(\Omega) \rightarrow H_0^1(\Omega)^* \text{ monotone and continuous}, \\& f(\alpha,0)=0, \quad|f(\alpha,v)|\leq C_\alpha(1+|v|^5), \quad \text{for some } C_\alpha\geq0.
\end{split}
\end{equation} Then, equation \eqref{ex0} with parameter $\varphi$, $c$, $a$ and $u_0$ such that  \eqref{eq:application_paramter_spaces}, \eqref{application_general_diffusion_constraint} hold, admits a unique solution 
\[u\in W^{1,\infty,\infty}(0,T;L^2\om,L^2\om)\cap W^{1,\infty,2}(0,T;H^1_0\om,H^1_0\om) \]

\proof 
We verify the conditions in Theorem \ref{theo-Roubicek} for $\Hf = L^2(\Omega)$, $\Vf = H_0^1(\Omega)$ with $\|u\|_{\Vf}=\|\nabla u\|_{\Hf}$ and $\Ff(u) := -F(u) - f(\alpha,u)$, where $F:\Vf \rightarrow \Vf^*$ is given as
\[ F(u) =  \nabla\cdot(a\nabla u) - cu. \]

First, note that due to measurability and the growth constraint, the Nemytskii mapping $f(\alpha,\cdot):\Vf \rightarrow \Vf^*$, where we set $f(\alpha,u)(w):= \int_\Omega f(\alpha,u(x))w(x) \wrt x$ for $w \in \Vf$, is indeed well-defined since,
\begin{align*}
\|f(\alpha,v)\|_{\Vf^*} &= \underset{\|w\|_\Vf\leq1}{\sup}  \int_\Omega f(\alpha,\commA{v}(x))w(x) \wrt x \leq\commA{ \underset{\|w\|_\Vf\leq1}{\sup}   C_{\alpha}(|\Omega|^{5/6}+\|v^5\|_{L^{6/5}\om})\|w\|_{L^{6}\om}}\\
&\leq \commA{C}C_{H^1\to L^6}(1+\|v^5\|_{L^{6/5}\om})\leq \commA{C}(C_{H^1\to L^6})^6(1+\|v\|_{H^1\om}^5).
\end{align*}%\todo{this can be removed if you agree to may not at the beginning of this section: Further, we assume that $f(\alpha,\cdot):\Vf \rightarrow \Vf^*$ is continuous, as this will be the case in our application below.}

Since $0<\underline{a}\leq a$ almost everywhere on $\Omega$ and $c\in L^2\om$, the estimate
\begin{align}\label{ex0-estimate-cu}
\la cu,u \ra_{\Vf^*,\Vf} %=\int_\Omega cu^2\wrt x 
& \leq \|c\|_{L^2\om}\|u^{3/2}u^{1/2}\|_{L^2\om} 
\leq (C_{H^1\to L^6})^{3/2}\|c\|_{L^2\om}\|u\|^{3/2}_ {\Vf} \|u\|^{1/2}_{\Hf} \nonumber\\
&\leq  \frac{3}{4}\left(\underline{a}^{3/4}\|u\|^{3/2}_\Vf \right)^{4/3}+\frac{1}{4}\left( \frac{(C_{H^1\to L^6})^{3/2}\|c\|_{L^2\om}}{\underline{a}^{3/4}}\|u\|^{1/2}_\Hf\right)^{4} \\
& =\frac{3\underline{a}}{4}\|u\|_\Vf^2 + \frac{(C_{H^1\to L^6})^6}{4\underline{a}^3}\|c\|_{L^2\om}^4\|u\|_\Hf^2 \nonumber
\end{align}
yields
\begin{align*}
\la -\nabla\cdot(a\nabla u) + cu,u \ra _{\Vf^*,\Vf}&\geq \underline{a}\|u\|_\Vf^2-\left(\frac{3\underline{a}}{4}\|u\|_\Vf^2 + \frac{(C_{H^1\to L^6})^6}{4\underline{a}^3}\|c\|_{L^2\om}^4\|u\|_\Hf^2\right)%\\&=\frac{\underline{a}}{4}\|u\|_\Vf^2 - \frac{(C_{H^1\to L^6})^6}{4\underline{a}^3}\|c\|_{L^2\om}^4\|u\|_\Hf^2
= c_0\|u\|_\Vf^2 - c_2\|u\|_\Hf^2,
\end{align*}
with $c_0:=\underline{a}/4$, $c_2:=(C_{H^1\to L^6})^6\|c\|_{L^2\om}^4/4\underline{a}^3$.
Together with monotonicity of $\commA{-f(\alpha,\cdot)}$ and $f(\alpha,0)=0$, one has $\la f(\alpha,u),u\ra_\vsv=\la f(\alpha,u)-f(\alpha,0), u-0\ra_{\Vf^*,\Vf}\geq 0$. This implies that semicoercivity as in \ref{ex0-S-coe} with $c_0$, $c_2$ as above and $c_1 = 0$. Also, the second estimate in the regularity condition \ref{ex0-S-reg} now follows directly with
\[c_0=C_0, \quad C_2=c_2,\]
where again, we employ monotonicity of $f(\alpha,\cdot)$.

In order to verify pseudomonotonicity \ref{ex0-S-pseu}, we first notice that $\Ff:\Vf\to\Vf^*$ is bounded, i.e., it maps bounded sets to bounded sets, and continuous where the latter follows from continuity of $F$, which is immediate, and continuity of $f$, which holds by assumption. Using this, one can apply \cite[Lemma 6.7]{FRANCU} to conclude pseudomonotonicity if  the following statement is true
\begin{align*}
[\,\, u_n \overset{\Vf}{\rightharpoonup} u \quad\commA{\text{and}}\quad\limsup_{n\to\infty}\,\la \Ff(u_n)-\Ff(u),u_n-u \ra_{\Vf^*,\Vf} \leq 0\,\,]
\quad\Rightarrow\quad u_n \overset{\Vf}{\rightarrow}u. 
\end{align*}
The latter follows since, by $\Vf\compt \Hf$, one gets for $u_n \overset{\Vf}{\rightharpoonup} u$ that $u_n\overset{\Hf}{\to}u$ and
\begin{align} \label{ex0-pseudo}
0 \geq\quad \limsup_{n\to\infty}\,\la\Ff(u_n)-\Ff(u),u_n-u \ra_{\Vf^*,\Vf} &\geq c_0 \limsup_{n\to\infty}\|u_n-u\|_\Vf^2-c_2\lim_{n\to\infty}\|u_n-u\|_\Hf^2 \nonumber\\
&=c_0\limsup_{n\to\infty}\|u_n-u\|_\Vf^2,
\end{align}
which implies $u_n \overset{\Vf}{\rightarrow}u$ as $n\to\infty.$ With this, Theorem \ref{theo-Roubicek} implies unique existence of a solution \[u\in W^{1,\infty,\infty}(0,T;\Hf,\Hf)\cap W^{1,\infty,2}(0,T;\Vf,\Vf).\] \qedhere
\end{lem}
Note that, by embedding, $u\in W^{1,\infty,\infty}(0,T;\Hf,\Hf)\cap W^{1,\infty,2}(0,T;\Vf,\Vf)$ implies that $u\in L^\infty(0,T;\Vf)\cap H^1(0,T;\Vf))$. In a second step, we now aim to find suitable assumptions on the parameter spaces $X_\varphi$, $X_c$, $X_a$ and $U_0$ such that regularity of the solution $u$ of \eqref{ex0} as obtained in the previous proposition is lifted to $u\in L^\infty\tom$.

\begin{rem}\label{regularty-2approaches}
There are at least two ways to achieve this: One is to enhance space regularity of $u$ from $H^1(\Omega) $ to $W^{k,p}(\Omega)$ with $ kp>d$ such that $W^{k,p} (\Omega) \hookrightarrow C(\overline{\Omega})$ and we can ensure $u \in L^\infty( (0,T), C(\overline{\Omega})) \hookrightarrow L^\infty\tom$. The other possible approach is to ensure a $ W^{2,q}\om$-space regularity with $q$ sufficiently large such that $ u \in L^2((0,T),W^{2,q}\om)\cap H^1(0,T;W^{2,q}\om)\hookrightarrow C(0,T;L^\infty\om)$.

While the first approach might yield weaker condition on $kp$, it imposes a non-reflexive state space. The latter choice on the other hand fits better into our setting of reflexive spaces, thus we proceed with the latter choice.
\end{rem}

Now our goal is to determine an exponent $q$ such that, if $u\in L^2(0,T;W^{2,q}\om)\cap H^1(0,T;H^1\om)$, it follows that $u \in  C(0,T;W^{1,2p}\om)$ with $p>d/2$ such that $W^{1,2p}\om \embed L^\infty(\Omega)$ and ultimately $u \in L^\infty\tom$.
To this \commA{aim}, first note that for $u\in L^2(0,T;W^{2,q}\om\cap H^1_0(\Omega))\,\cap\, H^1(0,T;H^1\om)$, by Friedrichs's inequality, %\todo{adapt to zero boundary conditions whenever necessary: $W^{k,p}_0$ instead of $W^{k,p}$}  \commA{done}
it follows that $u \in C(0,T;L^{2p}(\Omega))$ if
\[  |\commA{\nabla} u|^p \in C(0,T;L^2\om) .\]
To ensure the latter, we use that $(\commB{\nabla u})^p \in L^2(0,T;W^{1,q/p}\om)\cap  H^1(0,T;L^{2/p}\om)$ and that 
\[ L^2(0,T;W^{1,q/p}\om)\cap  H^1(0,T;L^{2/p}\om) \embed C(0,T;L^2\om)
\]
provided that $dp>q\geq\frac{dp}{d+1}$ and $\frac{p}{2}\leq 1-\frac{p}{q}+\frac{1}{d}$. Indeed, in this case it follows that
\[ L^{\frac{2}{p}}\om \embed L^{\frac{dq}{dq-dp+q}}\om \embed \commA{(}W^{1,\frac{q}{p}}\om)^* \]
such that the embedding into $C(0,T;L^2\om)$ follows from \cite[Lemma 7.3]{Roubicek} \commA{(see Appendix \ref{appendix-Roubicek})}.
Since $2pd/(2d+2-dp)\geq dp/(d+1)$, it follows that we can ensure for $ p>d/2$ that $ u \in  L^\infty\tom $ if 
\[ dp > q \geq \frac{2dp}{2d+2-dp}.
\]
This is fulfilled for $p=d/2+\epsilon$ with $\epsilon>0$ if  $dp > q\geq (2d^2+4\epsilon d)/(4+4d-d^2-2\epsilon d)$ and, more concretely, in case $d=2$ for $p=1+\epsilon$ $q=(2+2\epsilon)/(2-\epsilon)$ and $\epsilon \in (0,1)$ and in case $d=3$ for $p=3/2+\epsilon$, $q = (18+12\epsilon)/(7-6\epsilon)$ and $\epsilon \in (0,1/2)$.

Let us focus on the latter case of $d=3$ and derive suitable assumptions on $X_\varphi$, $X_c$, $X_a, U_0$ and $f$  such that the solution $u$ to \eqref{ex0} fulfills
\begin{equation*}%\label{ex0-space}
\begin{aligned}
u \in L^2(0,T;W^{2,q}\om)\cap H^1(0,T;H^1\om)\embed C(0,T;W^{1,2p}\om), 
\end{aligned}
\end{equation*}
where the embedding holds by our choice of $q$ and $p$.

\begin{lem}[\commB{Lifted regularity}] \label{prop:step2} In addition to the assumptions of Lemma \ref{prop:step1}, assume that $d=3$ and that, for positive numbers $p,\epsilon,q,\bar{q}$ and $Pa$ with 
\[
 p=3/2+\epsilon ,\quad  \min\{6,3p\}> q\geq \frac{18+12\epsilon}{7-6\epsilon},\quad \frac{q\bar{q}}{2\overline{q}-q} \leq 2 ,\quad \overline{q}\preceq\frac{3q}{3-q}, \quad Pa >\max\{3,\frac{q\bar{q}}{\overline{q}-q}\}
\]
it holds that
\begin{align*}
& c\in L^q\om,\quad a\in W^{1,Pa}\om, \text{ and } 0<\underline{a}\leq a(x)\leq \overline{a}\,\,\, \commB{\text{for almost all }} x\in \Omega,\\
& \varphi\in L^q\om,\quad  u_0\in H^2(\Omega),\\
& |f(\alpha,v)|< C_\alpha(1+|v|^{B}) \text{ with }B < 6/q + 1,
%\\ & \text{and the Nemytskii mapping } f(\alpha,\cdot):H_0^1(\Omega) \rightarrow H_0^1(\Omega)^* \text{ is continuous}
\end{align*}
Then, the unique solution of \eqref{ex0} fulfills 
\begin{align}\label{eq:prop-ex0-PDEexistence_embeddings}
u\in&L^2(0,T;W^{2,q}\om)\cap H^1(0,T;H^1\om)\embed C(0,T;W^{1,2p}\om)\embed L^\infty((0,T) \times \Omega)
\end{align}
\proof
 From \eqref{ex0} we get
\begin{align}\label{ex0-energy-0}
a\Delta u= \dot{u}-\nabla a\cdot\nabla u + cu + f(\alpha,u)-\varphi,
\end{align}
and by
$\overline{q}\preceq\frac{3q}{3-q}$ such that $W^{1,q}\om\embed L^{\overline{q}}\om$), we estimate the components of the right hand side of \eqref{ex0-energy-0}, using parameters $\delta,\delta_1>0$ (which will be small later on). %\todo{Notation clash with $p=3/2+\epsilon$. Rename second $\epsilon $ and $\epsilon_1$ to $\delta$ and $\delta_1$, or similar?} \commA{done}. 
Since $q\leq6$
\begin{align}
\|\dot{u}\|_{L^q\om}\leq C_{H^1\to L^q}\|\dot{u}\|_{H^1\om}.\label{ex0-energy-udot}
\end{align}
By $q \leq 6$ and $c\in L^q\om$, using density, we can choose $c_\infty \in L^\infty(\Omega)$ such that $\|c-c_\infty\|_{L^q\om} \leq \delta$ and obtain
\begin{align}
\|cu\|_{L^q\om} & \leq \|c_\infty u\|_{L^q\om}+\|(c-c_\infty) u\|_{L^q\om} \leq \|c_\infty\|_{L^\infty\om}\|u\|_{L^q\om}+\|c - c_\infty\|_{L^q\om}\|u\|_{L^\infty\om} \nonumber\\
& \leq C_{H^1\to L^q}\|c_\infty\|_{L^\infty\om}\|u\|_{H^1\om}+ C_{W^{2,q}\to L^\infty}\delta \|u\|_{W^{2,q}\om}.  \label{ex0-energy-c}
\end{align}
Now by the assumption $|f(\alpha,v)| \leq C_\alpha (1 + |v|^{B}) $ with $B < 6/q+1$ (note that this means also $B \leq 5$) then, by possibly increasing $B$, we can assume that $6/q < \commA{ B <  6/q+1}$ and select $\beta:= B-6/q \in (0,1)$, such that $q(B-\beta) = 6$. Applying Young's inequality with arbitrary positive factor $\delta_1>0$, we have
\begin{align}
\|f(\alpha,u)\|_{L^q\om}
& \leq C_\alpha(1+\||u|^B\|_{L^q\om}) \leq C_\alpha\left(1 + \|u\|^\beta_{L^\infty\om}\|u^{B-\beta}\|_{L^q\om}\right) \nonumber\\
& \leq C_\alpha\left(1+\beta\delta_1^{1/\beta}\|u\|_{L^\infty\om} +\dfrac{1-\beta}{\delta_1^{1/(1-\beta)}}\|u\|^\frac{B-\beta}{1-\beta}_{L^{q(B-\beta)}\om} \right) \nonumber\\
&\leq C_\alpha\left(1+ C_{W^{2,q}\to L^\infty}\beta\delta_1^{1/\beta}\|u\|_{W^{2,q}\om} +\dfrac{1-\beta}{\delta_1^{1/(1-\beta)}} C_{H^1\to L^{q(B-\beta)}} \|u\|^{\frac{B-\beta}{1-\beta}}_{H^1\om} \right) 
%\nonumber\\ &\qquad\qquad\qquad\text{if}\quad 0<\beta <1, q(B-\beta)\leq 6 \quad\Leftrightarrow\quad B< \frac{6}{q}+1,\\
\end{align}
Using $ a\in  W^{1,Pa}\om$  with $Pa\geq\frac{q\bar{q}}{\overline{q}-q}$ and  $\frac{q\bar{q}}{2\overline{q}-q} \leq 2$, again using density, we can choose $a_\infty\in W^{1,\infty}\om $ such that $\|\nabla a-\nabla a_\infty\|_{L^{\frac{q\bar{q}}{\overline{q}-q}}\om}<\delta$ and obtain % with $\epsilon_1>0$ arbitrary,
\begin{align}
%4----------
&\|\nabla a\cdot\nabla u\|_{L^q\om}\leq \|(\nabla a-\nabla a_\infty)\cdot\nabla u\|_{L^q\om}+\|\nabla a_\infty\cdot\nabla u\|_{L^q\om} \nonumber\\
&\quad\leq \|\nabla a-\nabla a_\infty\|_{L^{\frac{q\bar{q}}{\overline{q}-q}}\om} \|\nabla u\|_{L^{\overline{q}}\om}+\|\nabla a_\infty\|_{L^\infty\om}\||\nabla u|^{1/2}|\nabla u|^{1/2}\|_{L^q\om}  \nonumber\\
&\quad\leq \delta \|\nabla u\|_{L^{\overline{q}}\om} + \|\nabla a_\infty\|_{L^\infty\om}\left(\frac{\delta_1}{2}\|\nabla u\|_{L^{\overline{q}}\om}+\frac{1}{2\delta_1}\|\nabla u\|_{L^{\frac{q\bar{q}}{2\overline{q}-q}}\om} \right) \nonumber\\
&\quad\leq C_{W^{2,q}\to W^{1,\overline{q}}}\left(\delta+\frac{\delta_1}{2}\|\nabla a_\infty\|_{L^\infty\om}\right)\| u\|_{W^{2,q}\om} +  C_{L^2\to L^{\frac{q\bar{q}}{2\overline{q}-q}}}\frac{\|\nabla a_\infty\|_{L^\infty\om}}{2\delta_1}\| u\|_{H^1\om} 
%& \qquad\qquad\qquad\text{if }\quad a\in X_a\subset W^{1,Pa}\om\text{ with } Pa\geq\frac{q\bar{q}}{\overline{q}-q} \quad\text{and}\quad  \frac{q\bar{q}}{2\overline{q}-q} \leq 2, \label{ex0-energy-aterm}\\
\end{align}
Using that also $\varphi\in  L^q\om$, taking the spatial $L^q$-norm in \eqref{ex0-energy-0}, estimating by the triangle inequality, raising everything to the second power,
we arrive at
\begin{align*}
&\underline{a}^2\|\Delta u\|^2_{L^2( 0,T;L^q\om)}\leq\|a\Delta u\|^2_{L^2( 0,T;L^q\om)}\\
&\leq 5\left( \|\dot{u}\|^2_{L^2( 0,T;L^q\om)}+\|\nabla a \cdot \nabla u \|^2_{L^2( 0,T;L^q\om)} + \|cu\|^2_{L^2( 0,T;L^q\om)}  + \|f(\alpha,u)\|^2_{L^2( 0,T;L^q\om)} \right. \\
& \left. \qquad \quad + \|\varphi\|^2_{L^2( 0,T;L^q\om)}  \right)\\
&\leq 15\left(\|\dot{u}\|^2_{L^2( 0,T;L^q\om)}+C_{c,a}\|u\|^2_{L^2( 0,T;H^1\om)}+ T\|\varphi\|^2_{L^q\om}+ TC_{B,\alpha,\beta}\|u\|^{2\frac{B-\beta}{1-\beta}}_{L^\infty( 0,T;H^1\om)} +TC^2_\alpha \right. \\
&\left. \qquad \quad  +\tilde{\epsilon}\|\Delta u\|^2_{L^2( 0,T;L^q\om)} \right)
\end{align*}
with
$$
\tilde{\epsilon}:= \left[C_{W^{2,q}\to L^\infty }\delta+ C_\alpha C_{W^{2,q}\to L^\infty}\beta\delta_1^{1/\beta}+C_{W^{2,q}\to W^{1,\overline{q}}}\left(\delta+\frac{\delta_1}{2}\|\nabla a_\infty\|_{L^\infty\om}\right) \right]^2.
$$ 
For sufficiently small $\delta,\delta_1$, this leads to %\todo{replace $V$ by $H^1\om$ here?} \commA{done}
\begin{align} \label{ex0-energy-final}
&0<\quad \left(\frac{\underline{a}^2}{2^6}-\tilde{\epsilon}\right)\|\Delta u\|^2_{L^2(0,T;L^q\om)}\leq C_{c,a,\varphi,B,\beta,T}\left(\|\dot{u}\|^2_{L^2(0,T;H^1_0(\Omega))}+\|u\|^2_{L^\infty(0,T;H^1_0(\Omega))} \right) \quad< \infty.
\end{align}
%\todo{please check in particular if this is correct: TN: Yes, it is. I just rephrased it a little bit to make it clearer. We don't need to mention this in the response letter, I think.}
The fact that $\nabla u\in L^2(0,T;L^2(\Omega))$ and $\Delta u\in L^2(0,T;L^q(\Omega)), q\geq 2$ as above imply $\nabla u\in L^2(0,T;H^1\om)$ thus $\nabla u\in L^2(0,T;L^q\om)$ for $q\leq 6$. This and \eqref{ex0-energy-final} ensures that $u\in L^2(0,T;W^{2,q}\om)$. By Lemma \ref{prop:step1}, $u\in W^{1,\infty,\infty}(0,T;\Hf,\Hf)\cap W^{1,\infty,2}(0,T;\Vf,\Vf)$; thus, by embedding, $u \in H^1(0,T;H^1(\Omega))$. Consequently, \[u\in L^2(0,T;W^{2,q}\om\cap H^1(0,T;H^1(\Omega)).\] This, together with the argumentation after Remark \ref{regularty-2approaches} completes the proof. \qedhere
\end{lem}

The obtained unique existence result in now summarized in the following proposition. %Here, we focus on the practical case $d=3$. %\todo{fix $d=3$ also below, update according to commend on $q$ as above} \commA{done}

\begin{prop}\label{prop-ex0-PDEexistence}
i) The nonlinear parabolic PDE \eqref{ex0} with $d=3$ admits the unique solution
\begin{align}\label{eq:prop-ex0-PDEexistence_embeddings}
u\in&L^2(0,T;W^{2,q}\om)\cap H^1(0,T;H^1\om)\embed C(0,T;W^{1,2p}\om)\embed L^\infty((0,T) \times \Omega)
\end{align}
if the following conditions are fulfilled:%\todo{$q \geq$ or $=$ below?}
{\allowdisplaybreaks
\begin{align*}
& p=3/2+\epsilon \text{ with }\epsilon>0 \\
&  \min\{6,3p\}> q\geq \frac{18+12\epsilon}{7-6\epsilon}, \quad\text{and}\quad \frac{q\bar{q}}{2\overline{q}-q} \leq 2 \quad\text{with } \overline{q} \text{ such that } \overline{q}\preceq\frac{3q}{3-q}\\% W^{1,q}\om\embed L^{\overline{q}}\om\\
& c\in L^q\om,\quad a\in W^{1,Pa}\om, Pa >\max\{3,\frac{q\bar{q}}{\overline{q}-q}\}\text{ and } 0<\underline{a}\leq a(x)\leq \overline{a}\,\,\, \commB{\text{for almost all }} x\in \Omega,\\
& \varphi\in L^q\om,\quad  u_0\in H^2(\Omega),\\
&(-f(\alpha,\cdot)) \text{ is monotone and } f(\alpha,0)=0, |f(\alpha,v)|< C_\alpha(1+|v|^{B}) \text{ with }B < 6/q + 1,
%\\ & \text{and the Nemytskii mapping } f(\alpha,\cdot):H_0^1(\Omega) \rightarrow H_0^1(\Omega)^* \text{ is continuous}
\end{align*}}
%\note{the last line above can be removed if you agree to my note at the beginning of this section.}
ii) Moreover, the claim in i) still holds in case $f(\alpha,\cdot)$ is replaced by neural network $\Nc_\theta(\alpha,\cdot)$ with $\sigma\in \C_\text{Lip}(\R,\R)$. 
%\begin{align}\label{ex0-NN}
%\sigma\in \C_\text{Lip}(\R,\R) \quad \text{ s.t. }\quad |\Nta(u)-c_{\theta,\alpha}|\leq C_{\theta,\alpha}|u| \quad\text{for some constants } c_{\theta,\alpha}, C_{\theta,\alpha}.
%\end{align} 
\end{prop}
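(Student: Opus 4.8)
The plan is to read part~(i) as the precise statement of the two-step analysis carried out above it, so that its proof is an assembly argument rather than new work. For the first step I would apply Theorem~\ref{theo-Roubicek} with the Gelfand triple $\Vf=H^1_0(\Omega)\subseteq\Hf=L^2(\Omega)\subseteq\Vf^*$ and $\Ff(u)=-\nabla\cdot(a\nabla u)+cu-f(\alpha,u)$, observing that conditions~\ref{ex0-S-pseu}--\ref{ex0-S-reg} were verified above precisely under $0<\underline a\le a\le\overline a$ from \eqref{application_general_diffusion_constraint}, $c\in L^2(\Omega)$ (implied by $c\in L^q(\Omega)$ since $q\ge 2$), and the monotonicity, normalization and fifth-order growth of $f$ in \eqref{ex0-monotone}; the growth exponent $B<6/q+1$ listed in the proposition satisfies $B\le 5$, so it is compatible. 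This gives the unique solution $u\in W^{1,\infty,\infty}(0,T;\Hf,\Hf)\cap W^{1,\infty,2}(0,T;\Vf,\Vf)$. For the second step I would feed this solution into the regularity-lifting estimate: under the stated relations between $p$, $q$ and $\overline q$ (which produce the embedding chain \eqref{ex0-space}) and the integrability $\varphi\in L^q(\Omega)$, $c\in L^q(\Omega)$, $a\in W^{1,Pa}(\Omega)$ with $Pa>\max\{3,q\overline q/(\overline q-q)\}$, the $L^q$-bound of the right-hand side of \eqref{ex0-energy-0} yields \eqref{ex0-energy-final}, hence $u\in L^2(0,T;W^{2,q}(\Omega)\cap H^1_0(\Omega))$ and finally $u\in L^\infty((0,T)\times\Omega)$ through \eqref{eq:prop-ex0-PDEexistence_embeddings}. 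Uniqueness in this smaller space is inherited from the first step, since the space in \eqref{eq:prop-ex0-PDEexistence_embeddings} embeds into the solution space there.

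For part~(ii) the plan is to verify that a network $\Nt(\alpha,\cdot):\R\to\R$ with $\sigma\in\C_\text{Lip}(\R,\R)$ meets every structural hypothesis on $f$ used above, with its \emph{global} Lipschitz continuity $|\Nt(\alpha,v)-\Nt(\alpha,w)|\le C_L|v-w|$ (immediate from Lemma~\ref{NN-Lipschitz}, as $\sigma$ is globally Lipschitz) taking over the role of monotonicity and of $f(\alpha,0)=0$. Linear growth $|\Nt(\alpha,v)|\le|\Nt(\alpha,0)|+C_L|v|\le C(1+|v|)$ follows at once, which is exactly \eqref{cond-f2} with $B=1<6/q+1$; the Step~2 estimates then go through unchanged, in fact more cheaply, since a linear term contributes only to the $\|u\|_{H^1(\Omega)}$ bucket and nothing to $\tilde\epsilon$. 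For the existence step I would re-derive \ref{ex0-S-coe}--\ref{ex0-S-reg} by splitting $\Ff=-F-\Nt$: the $F$-part supplies the coercive lower bounds already obtained (the estimate behind \eqref{ex0-estimate-cu}), while $\la-\Nt(\alpha,u),u\ra_{\Vf^*,\Vf}\ge-\tilde c_1\|u\|_\Hf-\tilde c_2\|u\|_\Hf^2$ and $\la-\Nt(\alpha,u)+\Nt(\alpha,v),u-v\ra_{\Vf^*,\Vf}\ge-C_L\|u-v\|_\Hf^2$ follow from linear growth and the Lipschitz bound. The quadratic $\Hf$-terms merely enlarge the constants $c_2$, $C_2$, whereas $-\tilde c_1\|u\|_\Hf$ is turned by Poincar\'e's inequality on $H^1_0(\Omega)$ into an admissible $-c_1|u|_\Vf$ term (so now $c_1\ne0$, unlike the monotone case); thus semicoercivity and the regularity inequality persist. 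Finally $\Ff(u_0)-\varphi\in\Hf$ holds because $u_0\in H^2(\Omega)\embed L^\infty(\Omega)$ makes $\Nt(\alpha,u_0)\in L^\infty(\Omega)\subseteq L^2(\Omega)$, while the $F$-contribution lies in $L^2(\Omega)$ thanks to $Pa\ge 3$ and $q\ge 2$.

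It remains to re-check pseudomonotonicity~\ref{ex0-S-pseu} for $\Ff=-F-\Nt$, which I expect to be the main obstacle, as it is the one point where the non-monotone, non-normalized nature of the network could in principle cause failure. Following the route of the first step, I would invoke \cite[Lemma 6.7]{FRANCU}: $\Ff$ is bounded and continuous, the network part being a globally Lipschitz Nemytskii operator $L^2(\Omega)\to L^2(\Omega)$ that factors continuously through the embeddings $\Vf\embed L^2(\Omega)\cong L^2(\Omega)^*\embed\Vf^*$. Then for $u_n\overset{\Vf}{\rightharpoonup}u$ the compact embedding $\Vf\compt\Hf$ yields $u_n\to u$ in $\Hf$, so that $|\la-\Nt(\alpha,u_n)+\Nt(\alpha,u),u_n-u\ra_{\Vf^*,\Vf}|\le C_L\|u_n-u\|_\Hf^2\to0$; consequently $\limsup_n\la\Ff(u_n)-\Ff(u),u_n-u\ra_{\Vf^*,\Vf}=\limsup_n\la-F(u_n)+F(u),u_n-u\ra_{\Vf^*,\Vf}$, and the coercivity estimate \eqref{ex0-pseudo} forces $u_n\to u$ strongly in $\Vf$. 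The resolution of the obstacle is therefore that the problematic network contribution is controlled \emph{purely} in the $\Hf$-norm and is annihilated by the strong $\Hf$-convergence coming from compactness, leaving the coercive $F$-part to drive strong $\Vf$-convergence; with pseudomonotonicity in hand, Theorem~\ref{theo-Roubicek} applies verbatim and the regularity lifting of Step~2 (valid already for $B=1$) completes part~(ii).
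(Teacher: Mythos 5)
Your proposal is correct and follows essentially the same route as the paper: part (i) is assembled from Step 1 (Theorem \ref{theo-Roubicek} with $\Vf=H^1_0(\Omega)$, $\Hf=L^2(\Omega)$) and the Step 2 regularity lifting, and part (ii) replaces monotonicity and $f(\alpha,0)=0$ by the global Lipschitz bound $|\Nt(\alpha,v)|\leq|\Nt(\alpha,0)|+L_{\theta,\alpha}|v|$ together with the two pairing estimates controlled purely in the $\Hf$-norm, exactly as in the paper's proof. Your only addition is to spell out the re-verification of pseudomonotonicity \ref{ex0-S-pseu} (which the paper subsumes under ``similarly as in Step 1''), and your argument there — the Lipschitz network term is annihilated by strong $\Hf$-convergence from $\Vf\compt\Hf$, leaving \eqref{ex0-pseudo} to force strong $\Vf$-convergence — is precisely the intended one.
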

\begin{proof}%\note{Added details in proof}
i) Lemma \ref{prop:step1} ensures that \eqref{ex0} admits a unique solution 
\[u\in W^{1,\infty,\infty}(0,T;L^2(\Omega),L^2(\Omega))\cap W^{1,\infty,2}(0,T;H^1(\Omega),H^1(\Omega)),\] such that in particular $u\in L^\infty(0,T;H^1(\Omega))\cap H^1(0,T;H^1(\Omega))$. Proposition \ref{prop:step2} ensures the embeddings as in \eqref{eq:prop-ex0-PDEexistence_embeddings} 
%L^2(0,T;W^{2,q}\om)\cap H^1(0,T;H^1\om)\embed C(0,T;W^{1,2p}\om)\embed L^\infty((0,T) \times \Omega)$ 
hold true again by our choice of $p,q$.

ii) Now consider the case that $f(\alpha,\cdot)$ is replaced by $\Nc_\theta(\alpha,\cdot)$ for some known $\alpha, \theta$. With $L_{\theta,\alpha}$ the Lipschitz constant of $\Nc_\theta(\alpha,\cdot):\R \rightarrow \R$, we first observe that, for $v \in \R$, 
\[ |\Nc_\theta(\alpha,v)| \leq |\Nc_\theta(\alpha,0)| + |\Nc_\theta(\alpha,v)- \Nc_\theta(\alpha,0)| \leq |\Nc_\theta(\alpha,0)| + L_{\theta,\alpha}|v|
\]
such that the growth condition $|\Nc_\theta(\alpha,v)|< C_\alpha(1+|v|^{B})$  with $B < 6/q + 1$ and in particular the growth condition of Proposition \ref{prop:step2} holds. 
This shows in particular that the induced Nemytskii mapping $\Nc_\theta(\alpha,\cdot):H^1(\Omega) \rightarrow H^1(\Omega)^*$ is well-defined. Further, we can observe that, again for $u,v \in H^1(\Omega)$
\begin{align*}
&|\la \Nt(\alpha,u),u \ra_{H^1(\Omega)^*,H^1(\Omega)}|\leq L_{\theta,\alpha}\|u\|^2_{L^2(\Omega)}+|\Nc_{\theta,\alpha}(0)|C_{H^1\to L^1}\|u\|_{H^1(\Omega)},\\
&|\la \Nt(\alpha,u)-\Nt(\alpha,v),u-v \ra_{H^1(\Omega)^*,H^1(\Omega)}|\leq L_{\theta,\alpha}\|u-v\|^2_{L^2(\Omega)}.
\end{align*}
Using these estimates, it is clear that the conditions \ref{ex0-S-coe} and \ref{ex0-S-reg} in Theorem \ref{theo-Roubicek} can be shown similarly as in Step 1 without requiring $\Nt(\alpha,0)=0$  or monotonicity of $\Nc_\theta (\alpha,\cdot)$. This completes the proof.
%This means the estimates in \ref{ex0-S-coe}-\ref{ex0-S-reg} and \eqref{ex0-pseudo} do not change when $\Nc$ plays the role of $f$. Furthermore, we wish to emphasize that $\Nta$ does not need to satisfy monotonicity \todo{monotonicity only used in..?} as well as $\Nta(0)=0$ like $f$.\\
\end{proof}
\begin{rem}\label{rem-Lipactivation}
\commA{For neural networks, some examples fulfilling the conditions in Proposition \ref{prop-ex0-PDEexistence}, i.e. Lipschitz continuous activation functions, are the RELU function $\sigma(x)=\max\{0,x\}$,  the tansig function $\sigma(x)=\tanh(x)$, the sigmoid (or soft step) function $\sigma(x)=\frac{1}{1+e^{-x}}$, the softsign function $\sigma=\frac{x}{1+|x|}$ or the softplus function $\sigma(x)=\ln(1+e^x)$.}
\end{rem}

\subsection{Well-posedness for the all-at-once setting}
With the result attained in Proposition \ref{prop-ex0-PDEexistence}, we are ready to determine the function spaces for the minimization problems \eqref{eq:main_identification_aao_setting}, \eqref{eq:standard_pid_aao_setting} in the all-at-once setting and explore further properties discussed in Section \ref{sec-abstractPI}.

\begin{rem}\label{phi}
For minimization in the reduced setting, we usually invoke monotonicity in order to handle high nonlinearity (c.f. Proposition \ref{prop-ex0-PDEexistence}). The minimization problems in the all-at-once setting, however, do not require this condition, thus allowing for more general classes of functions, e.g. \commA{by including in $F$ another known nonlinearity} $\phi$  as in the following Proposition.
\end{rem}

%\begin{rem}[Reduced setting and neural network]
%The minimization problems \eqref{eq:main_identification_aao_setting}, \eqref{eq:standard_pid_aao_setting} associated with \eqref{ex0} in the reduced setting require the construction of the parameter-to-state map 
%\[S:X \times \R^m \times U_0\times \Theta \to\Uc,\quad (\lambda,\alpha,u_0,\theta) \mapsto u \quad\text{s.t. }\quad u \text{ solves } \eqref{ex0}.\]
%$S$ exists if
%\begin{align*}
%\sigma\in \C_\text{locLip}(\R,\R) \text{ is chosen}\quad \text{ s.t. }\quad |\Nta(u)-c_{\theta,\alpha}|\leq C_{\theta,\alpha}|u| \quad\text{for some constants } c_{\theta,\alpha}, C_{\theta,\alpha}.
%\end{align*} 
%The reason for this is that under the above assumption, the estimates in \ref{ex0-S-coe}-\ref{ex0-S-reg} do not change when $\Nc$ plays the role of the nonlinearity.\\
%Some examples for the activation functions are: RELU $\sigma(x)=\max\{0,x\}$,  tansig $\sigma(x)=\tanh(x)$, sigmoid or soft step $\sigma(x)=\frac{1}{1+e^{-x}}$, softsign $\sigma=\frac{x}{1+|x|}$, softplus $\sigma(x)=\ln(1+e^x)$ etc.
%\end{rem}

\begin{prop}\label{lem-ex0-existence}
For $d=3$ and $\epsilon>0$ sufficiently small, define the spaces
\begin{align*}
V= W^{2,q}\om, \quad \vtil=H^1\om,\quad H=W^{1,2p}\om, \quad W=L^q\om,\quad p=\frac{3}{2}+\epsilon, q=\frac{18+12\epsilon}{7-6\epsilon},
\end{align*} %\quad \todo{Y\supseteq V},
and $Y$ such that $V \embed Y$,
resulting in the following state-, image- and observation spaces
\begin{align*}
&\Vc=L^2(0,T;W^{2,q}\om)\cap H^1(0,T;H^1\om),\quad \Wc=L^2(0,T;L^q\om),\quad \Yc=L^2(0,T;Y).
\end{align*}
Further, define the corresponding parameter spaces $ U_0=H^2\om$, $X = X_\varphi \times X_c \times X_a$, where
\begin{align*}
&X_\varphi=X_c= L^q\om,\quad X_a= \{a\in W^{1,Pa}\om, Pa >3\}, % 0<\underline{a}\leq a\leq \overline{a}\quad\text{a.e. on } \Omega\},
\end{align*}
and let $M \in \Lc(\Vc,\Yc)$ be the observation operator.

Consider the minimization problems \eqref{eq:main_identification_aao_setting} and \eqref{eq:standard_pid_aao_setting}, 
with $\commB{F: (0,T) \times X \times V} \rightarrow W$ given as
\[
F(t,(\varphi,c,a),u) \commA{:=} \nabla\cdot(a\nabla u) - cu+\varphi + \phi(u),
\]
where $\phi:V \rightarrow W$ \commA{is an additional known nonlinearity in $F$ (c.f Remark \ref{phi})}; $\phi$ is the induced Nemytskii mapping of a function $\phi\in \C_\text{locLip}(\R,\R)$. The associated PDE given as,
\begin{alignat}{3}
& \dot{u} - \nabla\cdot(a\nabla u) + cu + \phi(u) + \Nt(\alpha,u)= \varphi \quad&&\mbox{ in }\Omega\times\ti,\nonumber\\
& u|_{\partial\Omega}=0 && \mbox{ in } \ti, \label{ex0_extended}\\
& u(0) = u_0 &&\mbox{ in }\Omega,\nonumber
\end{alignat}
with the activation functions $\sigma $ of $\Nt(\alpha,u)$ satisfying $ \sigma\in \C_\text{locLip}(\R,\R)$, and with $\Rc_1, \Rc_2 $ nonnegative, weakly lower semi-continuous and such that the sublevel sets of $(\lambda,\alpha,u_0,u,\theta) \mapsto \Rc_1(\lambda,\alpha,u_0,u) + \Rc_2(\theta)$ are weakly precompact. 
%
%
%associated with the PDE \eqref{ex0} with  \todo{$f(\alpha,u)$ and $f(u,\alpha)$? role of $\phi$?} $f(\lambda,u)=\Nt(\lambda,u)+\phi(u)$, $ \sigma\in \C_\text{locLip}(\R,\R)$, and $\phi\in \C_\text{locLip}(\R,\R)$ is known \todo{$\phi$ does not fit to general setting}.
%We pose these minimization problems on 
%\begin{align*}
%V= W^{2,q}\om, \quad \vtil=H^1\om,\quad H=W^{1,2p}\om, \quad W=L^q\om,\quad Y\supseteq V,\quad p=3/2+\epsilon, q=18/7+\epsilon,
%\end{align*}
%meaning that the preimage space, the image space and the observation space are
%\begin{align*}
%&\Vc=L^2(0,T;W^{2,q}\om)\cap H^1(0,T;H^1\om),\quad \Wc=L^2(0,T;L^q\om),\quad \Yc=L^2(0,T;Y),
%\end{align*}
%and with the parameter spaces $ U_0=H^2\om$, $X = X_\varphi \times X_c \times X_a$, where \todo{$X_a$ is not a space, need to include it in regularization and verify conditions}.
%\begin{align*}
%&X_\varphi=X_c= L^q\om,\quad X_a= \{a\in W^{1,Pa}\om, Pa >3: 0<\underline{a}\leq a\leq \overline{a}\quad\text{a.e. on } \Omega\},\quad .
%\end{align*}
Then, each of \eqref{eq:main_identification_aao_setting} and \eqref{eq:standard_pid_aao_setting} admits a  minimizer.
\end{prop}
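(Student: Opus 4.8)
The plan is to obtain existence by verifying the hypotheses of Proposition \ref{prop-minexist} for this concrete choice of spaces and of $F$. Since the conditions on $\Rc_1,\Rc_2$ (nonnegativity, weak lower semicontinuity, weak precompactness of sublevel sets) are imposed directly in the statement, the only substantive work is to check Assumption \ref{ass:general_basic} and the weak-closedness hypotheses of Lemma \ref{lem:closedness_forward_operator}; existence then follows verbatim from Proposition \ref{prop-minexist}.

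First I would check Assumption \ref{ass:general_basic}. Reflexivity and separability of $V=W^{2,q}\om$, $\tilde V=H^1\om$, $H=W^{1,2p}\om$, $W=L^q\om$, $U_0=H^2\om$ and $X=L^q\om\times L^q\om\times W^{1,Pa}\om$ hold since all exponents lie in $(1,\infty)$. The static embeddings in \eqref{eq:ass:static_space_embeddings} follow from the Sobolev embedding theorem in $d=3$ with $p=3/2+\epsilon$ and $q=(18+12\epsilon)/(7-6\epsilon)$: since $2q>3$ one has $W^{2,q}\om\embed L^\infty\om$, which yields the compact embedding $V\compt L^{\pfix}(\Omega)\embed W$ with the choice $\pfix=q$; since $2p>3$ one has $H=W^{1,2p}\om\embed L^\infty\om\embed W$ and $U_0=H^2\om\embed W^{1,2p}\om=H$, while $V\embed H$, $H\embed\tilde V$ and $\tilde V=H^1\om\embed L^6\om\embed L^q\om=L^{\pfix}(\Omega)$ are routine. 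The time-dependent embeddings $\Vc\embed L^\infty\tom$ and $\Vc\embed C(0,T;H)$ are exactly those established in Proposition \ref{prop-ex0-PDEexistence} for this same choice of $p,q$. Finally the Carath\'eodory conditions for $F$ are immediate (it is autonomous and continuous on $V$), and the growth condition \eqref{growth} in $W=L^q\om$ follows by reusing the $L^q$-estimates of Step 2 of Section \ref{sec:unique-existence} for the terms $\nabla\cdot(a\nabla u)$, $cu$ and $\varphi$, together with the bound $\|\phi(u)\|_{L^q\om}\leq|\phi(0)|\,|\Omega|^{1/q}+L(\|u\|_{L^\infty\om})\|u\|_{L^q\om}$ for the locally Lipschitz term $\phi$, using $V\embed L^\infty\om$; here one may take $\gamma\equiv1$ and absorb all constants into an increasing $\Bc$.

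Next I would invoke Lemma \ref{lem:closedness_forward_operator}. Weak continuity of $\Nc$ is immediate from $\sigma\in\C_\text{locLip}(\R,\R)$. For $F$, the weak-continuity alternative \eqref{eq:existence_f_weak_cont_ass} is unavailable, because the diffusion term $\nabla\cdot(a\nabla u)$ requires second spatial derivatives and hence $F$ is not even defined on $X\times H$; one must therefore verify the pseudomonotonicity alternative \eqref{pseudo_ass}--\eqref{pseudo_def}. The structural requirements $V\compt H$ (i.e. $W^{2,q}\om\compt W^{1,2p}\om$, valid since $2p<3q/(3-q)$) and $H\embed W^*$ (i.e. $W^{1,2p}\om\embed L^\infty\om\embed L^{q^*}\om$) are clear. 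For the pseudomonotonicity itself I would split $-F=[-\nabla\cdot(a\nabla u)+cu]-\varphi-\phi(u)$ and treat the pieces along a sequence with $u_k\to u$ in $H$ and $\lambda_k=(\varphi_k,c_k,a_k)\rightharpoonup(\varphi,c,a)$ in $X$: the lower-order and nonlinear contributions $c_ku_k$, $\varphi_k$ and $\phi(u_k)$ pass to the correct limit by weak-times-strong arguments, using $u_k\to u$ in $L^\infty\om$ (from $H\embed L^\infty\om$) and the compact embedding $W^{1,Pa}\om\compt C(\overline\Omega)$ (valid for $Pa>d=3$) giving $a_k\to a$ uniformly, so that these terms form a compact perturbation compatible with \eqref{pseudo_def}.

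The main obstacle is the principal part $\nabla\cdot(a_k\nabla u_k)=\nabla a_k\cdot\nabla u_k+a_k\Delta u_k$: to pass to the limit one must make sense of the product $a_k\Delta u_k$, and although $a_k\to a$ uniformly, the second-order factor $\Delta u_k$ is controlled only weakly in $L^2(0,T;L^q\om)$ through the $\Vc$-convergence and not pointwise in $t$ at the $H$-level. Reconciling the pointwise-in-$t$ application of \eqref{pseudo_def} with the fact that $H$-compactness does not control $\Delta u_k$ is the delicate step; I expect it to be handled by carrying the principal part at the time-integrated level, where $\Delta u_n\rightharpoonup\Delta u$ in $L^2(0,T;L^q\om)$ combines with the uniform convergence of $a_n$ to give $a_n\Delta u_n\rightharpoonup a\Delta u$, exploiting the linear-in-$u$ and monotone structure of the diffusion operator for fixed $a$ so that no strong control of second derivatives is needed for the required inequality. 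Once weak closedness of $\Gc$ is thus established, existence of minimizers for both \eqref{eq:main_identification_aao_setting} and \eqref{eq:standard_pid_aao_setting} follows directly from Proposition \ref{prop-minexist}.
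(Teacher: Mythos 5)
Your overall skeleton — verify Assumption \ref{ass:general_basic} for these concrete spaces, establish weak closedness of $\Gc$ via Lemma \ref{lem:closedness_forward_operator}, and conclude by Proposition \ref{prop-minexist} — matches the paper, and your observation that the weak-continuity alternative \eqref{eq:existence_f_weak_cont_ass} is unavailable for the diffusion term is correct. The genuine gap is the next move: ``one must therefore verify the pseudomonotonicity alternative'' is a wrong turn, because \eqref{pseudo_def} is a \emph{pointwise-in-$t$} condition whose premise supplies only weak convergence $u_k(t)\rightharpoonup u(t)$ in $H=W^{1,2p}\om$, and at that level the pairing $\la \nabla\cdot(a_k\nabla u_k(t)),u_k(t)-v\ra_\wws$ is not controllable: at a fixed $t$ one has no bound on $\Delta u_k(t)$ uniform in $k$ (only $\int_0^T\|u_k(t)\|_V^2\wrt t$ is bounded), so neither premise nor conclusion of \eqref{pseudo_def} can be checked for the principal part — the same obstruction that rules out \eqref{eq:existence_f_weak_cont_ass} rules out \eqref{pseudo_def}. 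Your closing sentence implicitly concedes this by proposing to ``carry the principal part at the time-integrated level,'' but that step no longer lives inside the pseudomonotonicity framework at all; and the appeal to the ``monotone structure of the diffusion operator'' is a red herring — no monotonicity is used or needed here, which is precisely the point of Remark \ref{phi} and the reason an arbitrary locally Lipschitz $\phi$ can be admitted.

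What closes the argument — and what the paper actually does — is to note that the two alternatives in Lemma \ref{lem:closedness_forward_operator} are merely \emph{sufficient} conditions; its final assertion only needs weak closedness of the Nemytskii operator $F:X\times\Vc\to\Wc$, which is proved directly by a mixed argument. The lower-order terms $cu$, $\varphi$ and $\phi(u)$ are handled at the $X\times H$ level by weak-times-strong arguments using $H=W^{1,2p}\om\compt L^\infty\om$ (so this part of your sketch is fine). For the principal part, one tests against $w^*\in L^2(0,T;L^{q^*}\om)$ and splits $\nabla\cdot(a\nabla u)-\nabla\cdot(a_n\nabla u_n)$ into four terms, passing to the limit via: $\nabla a_n\rightharpoonup\nabla a$ in $L^{Pa}\om$ paired with $\nabla u\,w^*\in L^2(0,T;L^{Pa^*}\om)$; strong convergence $u_n\to u$ in $L^2(0,T;W^{1,18}\om)$ from compactness in $\Vc$; uniform convergence $a_n\to a$ in $L^\infty\om$ (from $W^{1,Pa}\om\compt C(\overline{\Omega})$, $Pa>3$) paired with the bounded sequence $\Delta u_n\,w^*$ in $L^2(0,T;L^1\om)$; and $\Delta u_n\rightharpoonup\Delta u$ in $L^2(0,T;L^q\om)$ paired with $a\,w^*\in L^2(0,T;L^{q^*}\om)$. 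Your weak-times-strong intuition in the last paragraph is exactly this computation; recast it as a direct verification of weak continuity of the Nemytskii operator rather than as an attempt to fit the principal part into \eqref{pseudo_def}, and the proof is complete.
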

\begin{proof}
Our aim is examining the assumptions proposed \commB{Lemma \ref{lem:closedness_forward_operator}, which leads to the result in Proposition \ref{prop-minexist}}.
At first, we verify Assumption \ref{ass:general_basic}. The embeddings
\[  U_0 \hookrightarrow H\hookrightarrow  W, \quad  V\embed H, \quad V\embed Y, \quad V\compt L^\pfix(\Omega)  = W, \quad \tilde{V} \embed W. 
\]
are an immediate consequence of our choice of $p$ and $q$ and standard Sobolev embeddings.
The embeddings
\[\Vc\hookrightarrow L^\infty\tom,\quad\Vc\hookrightarrow C(0,T;H)\]
follow from the discussion in Step 2 above, see also Proposition \ref{prop-ex0-PDEexistence}.

%At first, the embeddings assumed in Assumption \ref{ass:general_basic}
%\[  U_0 \hookrightarrow H\hookrightarrow  W, \quad  V\embed H, \quad V\embed Y, \quad V\compt L^k(\Omega)  = W, \quad \tilde{V} \embed W \quad \text{for some }k\in[1,\infty)
%\]
%are an immediate consequence of our choice of $p$ and $q$ and standard Sobolev embeddings. The embeddings
%\[\Vc\hookrightarrow L^\infty\tom,\quad\Vc\hookrightarrow C(0,T;H)\]
%follow from the discussion in Step 2 above summarized in Proposition \ref{prop-ex0-PDEexistence}.
Noting that well-definedness of the Nemytskii mappings as well as the growth condition \eqref{growth} \commA{are consequences of the following arguments on weak continuity}. We focus on weak continuity of $F:\Vc\times(X_c,X_a,X_\varphi)\to\Wc, F(\lambda,u):=\nabla\cdot(a\nabla u) - cu+\varphi + \phi(u)$ via weak continuity of the operator inducing it as presented in Lemma \ref{lem:closedness_forward_operator}.
First, for the $cu$ part we see $(c,u)\mapsto cu$ is weakly continuous on $(X_c,H)$.  Indeed, for $c_n\rightharpoonup c$ in $X_c$, $u_n\rightharpoonup u$ in $H=W^{1,2p}\om\compt L^\infty(\Omega)$ thus $u_n\to u$ in $L^\infty(\Omega)$, one has for any $w^*\in W^*= L^{q^*}(\Omega)$, %\note{corrected signs} \commA{thanks.} \commA{"convergence" does not appear in the original version. We only have weak convergence of $c_n,a_n,u_n...$.}
\begin{align*}
&\int_\Omega (cu-c_nu_n)w^*\wrt x = \int_{\commB{\Omega}} (c-c_n)uw^*\wrt x +\int_{\commB{\Omega}} c_n(u-u_n)w^*\wrt x
\quad\overset{n\to\infty}{\to}0
\end{align*}
due to $uw^*\in L^{q^*}(\Omega)$, $ \|c_n w^*\|_{L^1(\Omega)}\leq C<\infty$ for all $n$ and $u_n\to u$ in $L^\infty(\Omega)$.\\
For the $\nabla\cdot(a\nabla u)$ part, $H=W^{1,2p}\om$ is not strong enough to enable weak continuity of $(a,u)\mapsto \nabla\cdot(a\nabla u)$ on $(X_a,H)$, we therefore evaluate directly weak continuity of the Nemytskii operator. So, \commB{let $(a_n,u_n) \rightharpoonup (a,u)$ in $X_a\times\Vc$}, taking $w^*\in L^2(0,T;L^{q^*}(\Omega))$ we have 
\begin{align*}
\int_{\commB{\Omega\times(0,T)}}& (\nabla\cdot(a\nabla u)-\nabla\cdot(a_n\nabla u_n))w^*\wrt x\wrt t\\&=\int_{\commB{\Omega\times(0,T)}} \nabla(a-a_n)\cdot\nabla u w^* \wrt x\wrt t  + \int_{\commB{\Omega\times(0,T)}} \nabla a_n\cdot\nabla(u-u_n) w^* \wrt x\wrt t\\
&\quad+\int_{\commB{\Omega\times(0,T)}} (a-a_n)\cdot\Delta u_n w^*\wrt x\wrt t+\int_{\commB{\Omega\times(0,T)}} a\Delta(u-u_n) w^*\wrt x\wrt t \qquad\overset{n\to\infty}{\to}0
\end{align*}
\commA{due to the following:} we have $\nabla u w^*\in L^2(0,T;L^{Pa^*}(\Omega)), \nabla a_n\rightharpoonup\nabla a$ in $L^{Pa}\om$ in the first estimate, and $u_n\to u$ in $L^2(0,T;W^{1,18}(\Omega)), \|\nabla a_n w^*\|_{L^2(0,T;L^{18/17}(\Omega))}\leq C<\infty$ for all $n$ in the second estimate. In the third estimate, one has $a_n\to a$ in $L^\infty\om$ and \[\|\Delta u_n w^*\|_{\commE{L^1}(0,T;L^1(\Omega))}\commB{\leq\|\Delta u_n\|_{L^2(0,T;L^{q}(\Omega))}\|w^*\|_{L^2(0,T;L^{q^*}(\Omega))}}\leq C<\infty \quad \text{for all $n$}.\] Finally, in in the last estimate it is clear that $aw^*\in L^2(0,T;L^{q^*}(\Omega)), u_n \rightharpoonup u$ in $L^2(0,T;W^{2,q}\om)$ \commB{implying $\Delta u_n \rightharpoonup \Delta u$ in $L^2(0,T;L^{q}\om$)}.\\
For the term $\phi$, by $H=W^{1,2p}\om\compt L^{\infty}\om$ we attain weak-strong continuity of $\phi$ on $H$ 
\begin{align} %\label{ex0-highnonlinear}
\|\phi(u_n)-\phi(u)\|_W\leq \|u_n-u\|_{L^\infty(\Omega)} L\left(\|u_n\|_{H},\|u\|_{H}\right) \quad \to 0\quad\text{for}\quad u_n\overset{H}{\rightharpoonup}u. 
\end{align}
%The growth condition \eqref{growth} in Assumption \ref{ass:general_basic} can be confirmed by a similar estimate.\\ 
%Finally, the fact that the  unknown nonlinear part $f(\alpha,u)$ is approximated by $\Nt$ with the activation function $\sigma\in \C_\text{locLip}(\R,\R)$ completes the verification of the assumptions of Proposition \ref{prop-minexist}.
Finally, the fact that activation function $\sigma $ satisfies $\sigma\in \C_\text{locLip}(\R,\R)$ completes the verification that the result of Proposition \ref{prop-minexist} holds.
\end{proof}
\commB{For the following results, we set $\phi=0$.}
\begin{lem}[Differentiability]\label{lem-differentiability}
In accordance with Proposition \ref{prop-Differentiability} and the frameworks in Proposition \ref{lem-ex0-existence}, \commB{setting $\phi=0$,} the model operator $F:X\times\Vc\to\Wc$ 
%, Remarks \ref{rem-C2activation}-\ref{rem-Hilbertspace-app} 
is \commB{G\^ateaux} differentiable, \commB{as is the neural network $\Nt:\R^m\times\Vc\to\Wc$ with $\sigma\in\Cc^1(\R,\R)$.} 
\begin{proof}
%\todo{comment on gateaux+caratheodory assumptions} \commA{commented in the second last line}
With the setting in Proposition \ref{lem-ex0-existence}, %\todo{what a about the other two settings with stronger/different space $W$} \commA{commented in the last line},
we verify local Lipschitz continuity of $F(\lambda,u)=\nabla\cdot(a\nabla u)-cu+\varphi$ with $\lambda = (\varphi,c,a)$. %\todo{check consistency of signs w.r.t., e.g., \eqref{ex0} \commA{sign is done}, the general existence result, and the section below} \todo{direct consequence of \eqref{eq:application_cu_estimate} and \eqref{eq:application_au_estimate}?}
%Similar to \eqref{eq:application_cu_estimate} and \eqref{eq:application_au_estimate}, one has
To this aim, we estimate
\begin{align*}
&\|F(\lambda_1,u_1)-F(\lambda_2,u_2)\|_W\\
&=\|\nabla\cdot(a_1\nabla(u_1-u_2)) -\nabla\cdot((a_2-a_1)\nabla u_2 )-c_1(u_1-u_2)+(c_2-c_1)u_2 + \varphi_1-\varphi_2 \|_{L^q\om} \\
&\leq \|\nabla a_1\|_{L^{Pa}\om}\|\nabla (u_1-u_2)\|_{L^\qbar\om} +\|a_1-a_2\|_{L^\infty\om}\|\Delta u_1-\Delta u_2\|_{L^q\om} + \|\nabla (a_2-a_1)\|_{L^{Pa}\om}\|\nabla u_2\|_{L^\qbar\om}\\
&\quad +\|a_2-a_1\|_{L^\infty\om}\|\Delta u_2\|_{L^q\om} + \|c_1\|_{L^q\om}\|u_1-u_2\|_{L^\infty\om} +\|c_2-c_1\|_{L^q\om}\|u_2\|_{L^\infty\om} + \|\varphi_1-\varphi_2\|_{L^q\om}\\
&\leq L(\|u_1\|_H,\|u_2\|_H,\|\lambda_1\|_X,\|\lambda_2\|_X)\big(\|u_1-u_2\|_V+\|u_1-u_2\|_H  + (1+\|u_2\|_V)\|\lambda_1-\lambda_2\|_X \big)
\end{align*} 
with $\qbar\overline{q}\preceq\frac{3q}{3-q}$. Also, \commB{G\^ateaux} differentiability of $F:X\times V\to W$ as well as Carath\'eodory assumptions are clear from this estimate and bilinearity of $F$ with respect to $\lambda, u$.
\commB{Differentiability of $\Nt$ with $\sigma\in\Cc^1(\R,\R)$ has been shown in Proposition \ref{prop-Differentiability}, the last paragraph of its proof.}
\end{proof}
\end{lem}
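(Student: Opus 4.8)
The plan is to verify the hypotheses of Proposition \ref{prop-Differentiability} for the concrete operator $F$ of Proposition \ref{lem-ex0-existence} and then invoke that proposition. Proposition \ref{prop-Differentiability} reduces Gat\^eaux differentiability to two facts about $F$: pointwise-in-$t$ joint Gat\^eaux differentiability of $F(t,\cdot,\cdot):X\times V\to W$ with a derivative satisfying the Carath\'eodory conditions, and the local Lipschitz estimate \eqref{cond-Lipschitz}. Since here $F(t,(\varphi,c,a),u)=\nabla\cdot(a\nabla u)-cu+\varphi$ carries no explicit $t$-dependence and is the sum of the linear map $\varphi\mapsto\varphi$ with the two bilinear maps $(a,u)\mapsto\nabla\cdot(a\nabla u)$ and $(c,u)\mapsto -cu$, I would treat the differentiability itself as essentially a consequence of boundedness: a bounded bilinear form is Fr\'echet (hence Gat\^eaux) differentiable with derivative given by the product rule, and measurability in $t$ of the derivative is automatic for an autonomous operator. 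Thus the only genuine work is to establish \eqref{cond-Lipschitz}, which simultaneously certifies the boundedness needed for the bilinear parts.

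To establish \eqref{cond-Lipschitz}, I would expand $F(\lambda_1,u_1)-F(\lambda_2,u_2)$ by adding and subtracting so as to separate a state-increment part (carrying $u_1-u_2$) from a parameter-increment part (carrying $\lambda_1-\lambda_2$): write $\nabla\cdot(a_1\nabla u_1)-\nabla\cdot(a_2\nabla u_2)=\nabla\cdot\bigl(a_1\nabla(u_1-u_2)\bigr)+\nabla\cdot\bigl((a_1-a_2)\nabla u_2\bigr)$ and $c_1u_1-c_2u_2=c_1(u_1-u_2)+(c_1-c_2)u_2$, then expand each divergence via $\nabla\cdot(a\nabla v)=\nabla a\cdot\nabla v+a\Delta v$. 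Each of the resulting products is bounded in $W=L^q\om$ by H\"older's inequality combined with the relevant embeddings: for the gradient--gradient terms I use $\nabla a\in L^{Pa}\om$ (as $a\in W^{1,Pa}\om$) together with $V=W^{2,q}\om\embed W^{1,\qbar}\om$ for $\nabla u$, with the conjugacy $\tfrac1{Pa}+\tfrac1{\qbar}=\tfrac1q$; for the coefficient--Laplacian terms I use $W^{1,Pa}\om\embed L^\infty\om$ (valid since $Pa>3=d$); and for the reaction terms I pair $X_c=L^q\om$ with $H=W^{1,2p}\om\embed L^\infty\om$ (valid since $2p>3$). Collecting these yields a bound of the form $L(M)\bigl(\|u_1-u_2\|_V+\|u_1-u_2\|_H+(1+\|u_2\|_V)\|\lambda_1-\lambda_2\|_X\bigr)$, which is exactly \eqref{cond-Lipschitz} after absorbing $1+\|u_2\|_V$ into $\max\{\|u_1\|_V,\|u_2\|_V\}+1$.

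The main obstacle I anticipate is the exponent bookkeeping for the mixed gradient term $\nabla a\cdot\nabla u$: one must confirm that the H\"older partner $\qbar=qPa/(Pa-q)$ is actually reachable through $W^{2,q}\om\embed W^{1,\qbar}\om$, i.e. that $\qbar\preceq 3q/(3-q)$. This inequality is equivalent to $Pa\geq 3$, so the standing assumption $Pa>3$ from Proposition \ref{lem-ex0-existence} is precisely what makes the estimate close; I would flag this as the one place where the concrete choice of parameter spaces is genuinely used, the remaining embeddings being comfortably satisfied by $2p>3$ and $q>2$. Once the estimate is in hand, the Carath\'eodory property of the derivative and the asserted Gat\^eaux differentiability of $F$ on $X\times\Vc$ follow immediately from the bilinear structure and Proposition \ref{prop-Differentiability}, with no further analysis required.
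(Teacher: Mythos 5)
Your proposal is correct and follows essentially the same route as the paper's proof: the identical add-and-subtract decomposition $\nabla\cdot(a_1\nabla(u_1-u_2))+\nabla\cdot((a_1-a_2)\nabla u_2)$ and $c_1(u_1-u_2)+(c_1-c_2)u_2$, the same H\"older/embedding estimates (pairing $\nabla a\in L^{Pa}\om$ with $W^{2,q}\om\embed W^{1,\qbar}\om$, using $W^{1,Pa}\om\embed L^\infty\om$ for $Pa>3$ and $H=W^{1,2p}\om\embed L^\infty\om$ for $2p>3$), yielding \eqref{cond-Lipschitz} and then concluding via bilinearity and Proposition \ref{prop-Differentiability}. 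Your explicit verification that $\qbar=qPa/(Pa-q)\preceq 3q/(3-q)$ is equivalent to $Pa\geq 3$ is a correct and welcome piece of bookkeeping that the paper leaves implicit, but it does not constitute a different approach.
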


\commB{When the image space $\Wc$ is stronger, that is, $W\nsupseteq L^q\om, \forall q\in[1,\infty)$ as discussed in Remark \ref{rem-diff-C2activation}, we require smoother activation functions than what was employed in Lemma \ref{lem-differentiability} in order to ensure differentiability of $\Nt$.}
\begin{rem}[Strong image space $\Wc$ and smoother neural network] \label{rem-C2activation}
Consider the case where the unknown parameter is $\varphi$,  parameters $a, c$ are known, and the neural network $\Nt$ \commB{has smoother activation}
\[\sigma \in \C^1_\text{locLip}(\R,\R), \text{ i.e. } \sigma'\in\C_\text{locLip}(\R,\R).\]
The minimization problems introduced in Proposition \ref{lem-ex0-existence} have minimizers \commB{that belong to the Hilbert spaces}
\begin{align*}
&\Vc=L^2(0,T;H^3\om)\cap H^1(0,T;H^1\om),%\embed C(0,T;H^2\om),
\quad\Wc=L^2(0,T;H^1\om), \quad\Yc=L^2(0,T;Y),\\
& V=H^3\om\commB{\embed Y}, \quad \widetilde{V}=H^1\om, \quad H=H^2\om, \quad W=H^1\om,
\end{align*}
%where $Y$ is a Hilbert space %such that $V \embed Y$, 
and
\begin{align*}
&X_c= H^1\om,\quad X_a= H^2\om%: 0<\underline{a}\leq a\leq \overline{a}\text{ a.e. on } \Omega\}
,\quad X_\varphi= H^1\om,\quad U_0=H^2\om.
\end{align*}
\begin{proof}
For fixed $\theta, \alpha$, let us denote $\Nt(\alpha,\cdot)=:\Nc_\theta$. It is clear that this setting fulfills all the embeddings in Assumption \ref{ass:general_basic}. % \todo{weak continuity for $\Phi$? \note{$phi=0$ as introduced before the remark} I guess you mean growth condition and continuity of $F$ is clear?} \note{added a sentence for this}.  
Weak-strong continuity of $\Nc_\theta$ is derived from
\begin{align*}
\|\Nta(u_n)-\Nta(u)\|_\Wc^2 &= \|\Nta(u_n)-\Nta(u)\|_{L^2(0,T;L^2\om)}^2+
\|\nabla\Nta(u_n)-\nabla\Nta(u)\|_{L^2(0,T;L^2\om)}^2\\
&=:A+B\quad \overset{n\to\infty}{\to}0,
\end{align*}
since with $\Vc\embed C(0,T;H^2\om)$ and $\sigma\in \C^1_\text{locLip}(\R,\R)$, one has
\begin{align*}
A&\leq C(L'_{\theta,\alpha}(\|u\|_\Vc))^2  \|u_n-u\|^2_{L^2(0,T;L^2\om)},\\
B&\leq 2\|\Nta'(u_n)(\nabla u_n-\nabla u)\|_{L^2(0,T;L^2\om)}^2+2\|(\Nta'(u_n)-\Nta'(u))\nabla u\|_{L^2(0,T;L^2\om)}^2\\
&\leq 2\|\Nta'(u_n)\|^2_{L^\infty\tom}\|\nabla u_n-\nabla u\|_{L^2(0,T;L^2\om)}^2 + 2(L''_{\theta,\alpha}(\|u\|_\Vc))^2\|(u_n- u)\nabla u\|_{L^2(0,T;L^2\om)}^2\\
&\leq 2(L'_{\theta,\alpha}(\|u\|_\Vc))^2  \|\nabla u_n-\nabla u\|^2_{L^2(0,T;L^2\om)} + 2(L''_{\theta,\alpha}(\|u\|_\Vc))^2\|\nabla u\|_{C(0,T;L^6\om)}^2 \|u_n-u\|^2_{L^2(0,T;L^3\om)},
%\\&A+B \leq M_{\theta,\alpha}(\|u\|_\Vc)  \|u_n-u\|_{L^2(0,T;H^1\om)}^2\quad\overset{n\to\infty}{\to}0
\end{align*}
implying $A+B \to 0$ for $u_n\overset{\Vc}{\rightharpoonup}u, \Vc\compt L^2(0,T;H^1\om)$ and Lipschitz constants $L', L''$. This shows continuity of $\Nc$ in $u$; continuity of $\Nc$ in $(\alpha,\theta)$ can be done similarly.
For $F$, when $c,a$ are known and fixed, it is just a linear operator on $u$. Weak continuity of $F$ hence can be explained through its boundedness, which can be confirmed in the same fashion as $A, B$ above.
\end{proof}
\end{rem}

%\begin{rem}[Hilbert space framework]\label{rem-Hilbertspace}
%The framework suggested in Remark \ref{rem-C2activation} is also a feasible choice for the reduced formulation, in which the parameter-to-state map must be well-defined.
%%\todo{Is this the approach?: First, note that the assumptions of Step 1 of Proposition \ref{prop-ex0-PDEexistence} are fulfilled, such that a solution $u \in \ldots$ of \eqref{ex0} exists. Then ...}
%First, note that the assumptions in Step 1 of Proposition \ref{prop-ex0-PDEexistence} are fulfilled, thus ensuring existence of a unique solution $u\in W^{1,\infty,\infty}(0,T;L^2\om,L^2\om)\cap W^{1,\infty,2}(0,T;H^1_0\om,H^1_0\om)$. In Step 2, taking gradient on both sides of the first equation in \eqref{ex0} 
%\begin{align*}
%a\nabla\Delta u&=\nabla\dot{u} +(c-2\nabla a)\Delta u -\Delta a\nabla u +\nabla c\, u + f'_u(\alpha,u)\nabla u - \nabla\varphi\\
%&=: \nabla\dot{ u} + A\Delta u+ B\nabla u + Cu + \Fc(\alpha,u) -\Phi,
%\end{align*}
%we form a new parabolic PDE with new parameters $A, B, C, \Phi$ and the new nonlinear term $\Fc$. Estimating in the same fashion as Step 2 of Proposition \ref{prop-ex0-PDEexistence} with noting $\dot{u}\in L^2(0,T;H^1\om)$ obtained from Step 1, one can prove $u\in L^2(0,T;H^3\om).$ This shows well-definedness of the parameter-to-state map, which is needed in the reduced setting.
%\end{rem}

\commB{To conclude this section, we consider a Hilbert space setting that will be relevant for our subsequent applications.}

\begin{rem}[Hilbert space framework for application]\label{rem-Hilbertspace-app}
Another possible Hilbert space framework where the all-at-once setting is applicable is %\todo{again define spaces $V,H,\tilde{V}$ etc. first} \commA{done}
\begin{align*}
&\Vc=H^1(0,T;H^2\om)\embed C(0,T;H^2\om),\quad\Wc=L^2(0,T;L^2\om),\quad \Yc=L^2(0,T;Y),\\
& V=\widetilde{V}=H=H^2\om\commB{\embed Y}, \quad W=L^2\om
\end{align*}
where $Y$ is a Hilbert space, %with $H^2(\Omega) \embed Y$, 
and
\begin{align*}
&X_c= L^2\om,\quad X_a= H^2\om,% 0<\underline{a}\leq a\leq \overline{a}\quad\text{a.e. on } \Omega\}, 
\quad X_\varphi= L^2\om,\quad U_0=H^2\om.
\end{align*}
Verification of weak continuity and the growth condition for $F$ can be carried out similarly as in Proposition \ref{lem-ex0-existence}; moreover, weak continuity of $(X_a\times H)\ni(a,u)\mapsto\nabla\cdot(a\nabla)\in W$ can be confirmed like the part $(c,u)\mapsto cu$, without the need of evaluating directly the Nemytskii operator. %Verification of the Lipschitz condition to confirm differentiability is similar to Remark \ref{lem-differentiability}.
This is the setting in which we will study in detail the application \eqref{ex0}.
\end{rem}

\section{Case studies in Hilbert space framework}\label{sec:case_study}

\subsection{Setup for case studies}\label{sec:casestudy}
In this section, for the sake of simplicity of implementation, we carry out case studies for some minimization examples in a Hilbert space framework, where we drop the unknown $\alpha$ and use the regularizers \commB{$\Rc_1=\|\cdot\|^2_{X\times U_0\times\Vc}$, $\Rc_2=\|\cdot\|^2_\Theta$}.

% \todo{can be removed: We assume that it has a slightly simpler architecture than in Section \ref{sec:CNN} with single channel in each hidden layer, i.e. $k_i=\ldots=k_L=1$,  yielding the hyperparameters in the $l$-layer: $\weight^l_{s,j}=\weight^l\in \R^{n_l\times n_{l-1}}$ and $\bias^l_{s,j}=\bias^l\in\R^{n_l}$.} %Apart from that,  we assume to have full observation $M=\text{Id}$ or several discrete observations $M=(\cdot)_{t=t_i}, t_i\in(0,T)$ of the state $u$.

\begin{prop}\label{prop-adjoints-cont}
Consider the minimization problem \eqref{eq:main_identification_aao_setting} (or \eqref{eq:standard_pid_aao_setting}) associated with the learning informed PDE
\begin{alignat}{3}
& \dot{u}-\nabla\cdot(a\nabla u) + cu -  \varphi -\Nt(u)=:\dot{u} - F(\lambda,u)-\Nc(u,\theta) =0 \quad&&\mbox{ in }\Omega\times\ti\nonumber\\
& u(0) = u_0 &&\mbox{ in }\Omega\nonumber
\end{alignat}
for $\commB{\sigma\in}\, \Cc^1(\R,\R)$, $M=\text{Id}$ in the Hilbert spaces %\todo{again define spaces $V,H,\tilde{V}$ etc. explicitly} \commA{done}
\begin{align*}
&\Vc=H^1(0,T;H^2\om\cap H^1_0\om)\embed C(0,T;H^2\om),\qquad \Wc=\Yc=L^2(0,T;L^2\om),\\
& V=\widetilde{V}=H=H^2\om\cap H^1_0\om, \quad W=Y=L^2\om,\\
& X_c= L^2\om,\quad X_a= H^2\om %: 0<\underline{a}\leq a\leq \overline{a}\text{ a.e. on } \Omega\}
,\quad X_\varphi= L^2\om,\quad U_0=H^2\om.
\end{align*}
The following statements are true:
\begin{enumerate}[label=(\roman*)]
\item The minimization problem admits minimizers.
\item The corresponding model operator $\Gc$ is \commB{G\^ateaux} differentiable with locally bounded $\Gc'$.
\item The adjoint of the derivative operator is given by
\begin{align*}
&\Gc'(\lambda,u,\theta)^*: \Wc\times H\times\Yc\to X\times\Vc\times\Theta\\
&\Gc'(\lambda,u,\theta)^*=
\begin{pmatrix}
-F'_\lambda(\lambda,u)^* &  0 & 0\\
\left(\frac{d}{dt}-F'_u(\lambda,u)-\Nc_u'(u,\theta)\right)^* & (\cdot)_{t=0}^* & M^*\\
-\Nc_\theta'(u,\theta)^* & 0 &0
\end{pmatrix}
=:(g_{i,j})_{i,j=1}^3
\end{align*}
with
\begin{alignat*}{3}
&F'_\lambda(\lambda,u)^*:\Wc\to X, \qquad && F'_u(\lambda,u)^*: \Wc\to \Vc, \qquad &&(\cdot)_{t=0}^*:H\to \Vc\\
&\Nc_\theta'(u,\theta)^*: \Wc\to \Theta,\ && \Nc_u'(u,\theta)^*:\Wc\to \Vc, && M^*: \Yc\to\Vc.
\end{alignat*}
\end{enumerate}
By defining $\Dinv: L^2(\Omega)\ni k^z\mapsto \ztil\in H^2(\Omega)\cap H^1_0(\Omega)$ %\todo{define corresponding spaces} 
such that $\ztil$ solves
\begin{align}\label{auxiliaryPDEs}
\begin{cases}
-\Delta\ztil&=z_1 \quad \text{in } \Omega\\
\quad\ztil&=0 \quad \text{ on }\partial\Omega
\end{cases},
\qquad
\begin{cases}
-\Delta z_1+z_1&=k^z \quad \text{in } \Omega\\
\qquad\quad z_1&=0 \quad \text{ on }\partial\Omega,
\end{cases}
\end{align}
we can write explicitly
{\allowdisplaybreaks
\begin{align}
&g_{2,2}: \quad (\cdot)^*_{t=0}h=h, \label{adjoint-t0}\\[1ex]
&g_{2,3}: \quad M^*z(t)=\int_0^T(t+1)\Dinv z(t)\wrt t-\int_0^t(t-s)\Dinv z(s)\,ds, \label{adjoint-M}\\[1ex]
&g_{2,1}: \quad \left(\frac{d}{dt}-F'_u(\lambda,u)-\Nc_u'(u,\theta)\right)^*z(t) \nonumber\\
&\qquad\quad=\int_0^T(t+1)\Dinv \Ktil z(t)\wrt t-\int_0^t\Dinv [(t-s)\Ktil z(s)-z(s)]\,ds \nonumber\\
&\qquad\quad\text{with } \Ktil = -\nabla\cdot(a\nabla\cdot)+c-\Nc_u'(u,\theta)\text{ and } \Nc_u' \text{ is computed as in Lemma } \ref{NN-Lipschitz}, \label{adjoint-FuNu}\\[1ex]
&g_{1,1}: \quad-F'_\lambda(\lambda,u)^*z=
\begin{cases}
\int_0^T z(t)u(t)\wrt t \qquad &\text{for } \lambda=c\\[1.5ex]
\int_0^T -z(t)\wrt t \qquad &\text{for } \lambda=\varphi\\[1.5ex]
\int_0^T \Dinv(-\nabla\cdot(z\nabla u))(t)\wrt t \qquad\quad& \text{for } \lambda=a,\\
\end{cases} \label{adjoint-Flambda}
\end{align}
$g_{3,1}$: one has the recursive procedure
\begin{align}\label{adjoint-Ntheta}
&\delta_L := 1, \qquad \delta_{l-1}:= {a'}^T_{l-1} \weight^T_l \delta_l,  \quad\qquad l=L\ldots 2, \nonumber\\
&\nabla_{\weight_{l-1}}\Nc(u,\theta)^*z= \int_0^T\int_\Omega \delta_{l-1}  a_{l-2}^T \,z \wrt x\wrt t,\\
&\nabla_{\bias_{l-1}}\Nc(u,\theta)^*z= \int_0^T\int_\Omega \delta_{l-1}\,z \wrt x\wrt t, \nonumber
\end{align}}
with $a_l,a'_l$ detailed in the proof.
\end{prop}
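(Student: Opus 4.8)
The plan is to obtain (i) and (ii) by specializing the general results of Section \ref{sec-abstractPI}, and to obtain (iii) by a direct variational computation of each block of the adjoint. For (i), the regularizer $\Rc=\|\cdot\|^2$ is a squared Hilbert-space norm, hence nonnegative, weakly lower semi-continuous, and with bounded --- thus weakly precompact --- sublevel sets; it therefore suffices to check the weak-closedness hypotheses of Lemma \ref{lem:closedness_forward_operator} in the present spaces. This is exactly the Hilbert-space framework of Remark \ref{rem-Hilbertspace-app}: weak continuity of $F$ is verified as for the terms $cu$ and $\nabla\cdot(a\nabla u)$ there, and weak-to-strong continuity of $\Nc$ follows from $\sigma\in\Cc^1(\R,\R)$ together with the compact embedding $\Vc\compt L^2(0,T;H^1\om)$ as in Remark \ref{rem-C2activation}. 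Existence then follows from Proposition \ref{prop-minexist}. For (ii), Gat\^eaux differentiability with locally bounded derivative is an application of Proposition \ref{prop-Differentiability}: the local Lipschitz estimate \eqref{cond-Lipschitz} for $F$ is obtained exactly as in Lemma \ref{lem-differentiability}, and differentiability of $\Nc$ into the strong image space $W=L^2\om$ uses $\sigma\in\Cc^1$ (cf. Remark \ref{rem-diff-C2activation}). The displayed form of $\Gc'$ is then read off from Proposition \ref{prop-Differentiability} after dropping the $\alpha$- and $u_0$-columns.

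For (iii), the block matrix for $\Gc'(\lambda,u,\theta)^*$ is immediate from the definition of the adjoint on product Hilbert spaces: since $\langle\Gc'x,y\rangle=\langle x,\Gc'^*y\rangle$ and $\Gc'$ has the block form of Proposition \ref{prop-Differentiability}, the operator $\Gc'^*$ is its block-transpose with each entry replaced by its Hilbert adjoint, giving precisely $(g_{i,j})_{i,j=1}^3$. The real content is the explicit evaluation of each block, for which I first fix the inner products: on $\Vc$ I use the equivalent $H^1(0,T;V)$-product $\langle u,w\rangle_\Vc=\langle u(0),w(0)\rangle_V+\int_0^T\langle\dot u,\dot w\rangle_V\wrt t$, and on $V=H^2\om\cap H^1_0\om$ the inner product whose Riesz map $R_V$ is $(-\Delta)(-\Delta+\text{Id})$, so that its inverse restricted to $L^2\om$ is exactly the operator $\Dinv$ of the auxiliary system \eqref{auxiliaryPDEs}. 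The basic identity used throughout is $\langle u,g\rangle_{L^2\om}=\langle u,\Dinv g\rangle_V$ for $g\in L^2\om$, valid because $R_V\Dinv=\text{Id}$, which converts every $L^2$-pairing appearing on the image side into a $V$-pairing.

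With these conventions the blocks follow by integration by parts. For $g_{1,1}$ I pair $-F'_\lambda(\lambda,u)(\delta\lambda)=-\nabla\cdot(\delta a\nabla u)+\delta c\,u-\delta\varphi$ with $z$ over space and time; the $\varphi$- and $c$-components pair directly in $L^2$ and yield $\int_0^T -z\wrt t$ and $\int_0^T zu\wrt t$, while the $a$-component lives in $X_a=H^2\om$ and must be Riesz-represented there, which after spatial integration by parts produces the elliptic lift $\Dinv(-\nabla\cdot(z\nabla u))$. The block $g_{2,2}$ is immediate: the constant-in-time function $w\equiv h$ satisfies the defining identity thanks to the boundary term $\langle u(0),w(0)\rangle_V$ in the $\Vc$-product, whence $(\cdot)^*_{t=0}h=h$. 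For the genuinely time-coupled blocks $g_{2,3}$ and $g_{2,1}$ I write the defining identity, rewrite the image-side $L^2$-pairings as $V$-pairings via the above identity, integrate by parts in time, and match the coefficients of $u(0)$, $u(T)$ and $u(t)$ for $t\in(0,T)$. For $g_{2,3}$ ($M=\text{Id}$) this yields the two-point boundary value problem $\ddot w=-\Dinv z$, $\dot w(T)=0$, $w(0)=\dot w(0)$, whose Green's function gives the kernel $(t+1)\int_0^T\Dinv z-\int_0^t(t-s)\Dinv z(s)\wrt s$; $g_{2,1}$ is the same computation with the additional operator $\Ktil=-\nabla\cdot(a\nabla\cdot)+c-\Nc_u'$, whose self-adjoint $L^2$-part contributes the $\Dinv\Ktil z$ terms and whose first-order $\frac{d}{dt}$-part contributes the extra $-z(s)$ inside the kernel. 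Finally $g_{3,1}=-\Nc_\theta'(u,\theta)^*$ is obtained by transposing the layerwise derivative formulas of Lemma \ref{NN-Lipschitz}, which is exactly the backpropagation recursion $\delta_{l-1}={a'}^T_{l-1}\weight^T_l\delta_l$.

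The main obstacle is the family of time-coupled adjoints $g_{2,1}$ and $g_{2,3}$. Because the $\Vc$-product couples the time derivative with the spatial $H^2\om\cap H^1_0\om$ structure, these adjoints are not pointwise transposes but are characterized by a coupled space-time two-point boundary value problem; the delicate points are keeping track of the boundary terms from integration by parts in time --- which fix the conditions $\dot w(T)=0$ and $w(0)=\dot w(0)$ --- and correctly inverting the $V$-Riesz map, i.e.\ the appearance of $\Dinv$. Once the scalar-in-time Green's function is identified, the remaining blocks $g_{1,1}$ and $g_{3,1}$ are comparatively routine, reducing to spatial integration by parts with a Riesz representation and to backpropagation, respectively.
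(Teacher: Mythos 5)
Your proposal is correct and follows essentially the same route as the paper: (i) and (ii) by specializing Remark \ref{rem-Hilbertspace-app}, Proposition \ref{prop-minexist} and Proposition \ref{prop-Differentiability} (with the Lipschitz estimate of Lemma \ref{lem-differentiability}), and (iii) by equipping $\Vc$ with the equivalent inner product $(u(0),v(0))_V+\int_0^T(\dot u,\dot v)_V\wrt t$, using $\Dinv$ as the inverse $V$-Riesz map over $L^2\om$ to convert image-side pairings, and integrating by parts in time to arrive at exactly the paper's two-point problem $\ddot{u}^z=-\ztil$, $\uzdot(T)=0$, $\uzdot(0)=u^z(0)$ (whose explicit solution gives the kernels in \eqref{adjoint-M} and \eqref{adjoint-FuNu}), with the backpropagation recursion \eqref{adjoint-Ntheta} for the $\theta$-block. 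The only cosmetic deviations are your cross-references to Remarks \ref{rem-C2activation} and \ref{rem-diff-C2activation}, which concern the stronger image space $\Wc=L^2(0,T;H^1\om)$ and are not needed here since $\Wc=L^2(0,T;L^2\om)$ lets Lemma \ref{lem:closedness_forward_operator} and Proposition \ref{prop-Differentiability} apply directly with $\sigma\in\Cc^1(\R,\R)$.
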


\begin{proof}
Assertion i) follows from Remark \ref{rem-Hilbertspace-app}.
Using Proposition \ref{prop-Differentiability}, Assertion ii) can be shown similarly as in Lemma \ref{lem-differentiability}.
%Assertion ii) follows from Proposition \ref{prop-Differentiability}, Lemma \ref{lem-differentiability}.
The proof for assertion iii) is presented in Appendix \ref{appendix-proof-adjoint}.
\end{proof}

\begin{cor}[Discrete measurements]\label{prop-adjoints-dis}
In case of discrete measurements $M_i:\Vc\to Y, M_i(u)=u(t_i), t_i\in(0,T)$, where the pointwise time evaluation is well-defined as $\Vc\embed C(0,T;H^2(\Omega))$, %\todo{mention that point-evaluations are well-defined for elements in $\Vc$ and operator is continuous (reference), again define involved quantities first} \note{done}
the adjoint $g_{2,3}$ is modified as follows. For $h\in Y$,
\begin{align*}
&(h,v(t_i))_\ltn=(\tilde{h},v(t_i))_\htn=\int_0^{t_i}(-\uhddot(t),v(t))_\htn\wrt t+(\tilde{h},v(t_i))_\htn\\
&=\int_0^{t_i}(\uhdot(t),\dot{v}(t))_\htn\wrt t+(u^h(0),v(0))_\htn-(\uhdot(t_i)-\tilde{h}(t),v(t_i))_\htn+(\uhdot(0)-u^h(0),v(0))_\htn\\
&=(u^h,v)_{H^1(0,t_i;H^2(\Omega))}=(u^h,v)_\Vc,
\end{align*}
provided that $u^h=$ const in $[t_i,T]$ in order to form the integral of the full time line $(0,T)$ in the last line. %\todo{where we set $u^h$ to be constant in $[t_i,T]$?}. \note{done}
Above, $h,\tilde{h}$ are respectively in place of $k^z$ and $\ztil$ in \eqref{auxiliaryPDEs}; besides,  $u^h$ solves
\begin{equation}
\begin{split}
&\uhddot(t)=0 \qquad t\in(0,t_i)\\
&\uhdot(t_i)=\tilde{h}, \quad \uzdot(0)-u^z(0)=0.
\end{split}
\end{equation}
Thus we arrive at 
\begin{align}\label{discrete}
(M_i)^*h=u^h(t)=
\begin{cases}
\Dinv h(t+1) \qquad& 0<t\leq t_i\\
\Dinv h(t_i+1) & t_i<t\leq T.
\end{cases}
\end{align}
This shows a numerical advantage of processing discrete observations in an Kaczmarz scheme, for instance in deterministic or stochastic optimization. To be specific, for each data point in the forward propagation, thanks to the all-at-one approach, no nonlinear model needs to be solved; in the backward propagation, by the same reason and \eqref{discrete}, one needs to compute the corresponding adjoint only for small time intervals.
\end{cor}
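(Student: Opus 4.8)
The plan is to compute $M_i^*$ directly from its defining relation $(M_i v,h)_Y = (v,M_i^* h)_\Vc$ for all $v\in\Vc$ and $h\in Y = \ltn$, and to show that the element $u^h := M_i^* h$ is exactly the one displayed in \eqref{discrete}. Throughout I would work with the inner product on $\Vc = H^1(0,T;H^2(\Omega)\cap H^1_0(\Omega))$ in the equivalent form $(u,v)_\Vc = (u(0),v(0))_\htn + \int_0^T (\uhdot(t)\text{-type term})$, i.e. $(u,v)_\Vc = (u(0),v(0))_\htn + \int_0^T (\dot u(t),\dot v(t))_\htn\wrt t$, which is the form implicitly used for the continuous adjoint in Proposition \ref{prop-adjoints-cont}. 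This choice is essential: the absence of a standalone $\int_0^T(u,v)_\htn\wrt t$ term is precisely what will let the time integral reduce to a boundary contribution.

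First I would rewrite the left-hand side using $\Dinv = (-\Delta)^{-1}(-\Delta+\text{Id})^{-1}$ as the Riesz isomorphism between $\ltn$ and $H = \htn\cap H^1_0(\Omega)$: for $w\in H$ one has $(h,w)_\ltn = (\Dinv h, w)_\htn$, which follows immediately from the two auxiliary elliptic problems in \eqref{auxiliaryPDEs} (they give $h = (\Delta^2-\Delta)\Dinv h$, whence the identity by integration by parts). Writing $\tilde h := \Dinv h$ and applying this with $w = v(t_i)$, which is legitimate since $\Vc\embed C(0,T;\htn)$ makes the pointwise time evaluation well-defined, the quantity to be matched becomes $(\tilde h, v(t_i))_\htn$.

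The key step is to turn the pointwise-in-time evaluation $(\tilde h, v(t_i))_\htn$ into the $\Vc$ inner product by introducing an $\htn$-valued auxiliary function $u^h$ and integrating by parts in time. I would posit $u^h$ as the solution of the second-order time ODE $\uhddot = 0$ on $(0,t_i)$, subject to the terminal derivative condition $\uhdot(t_i) = \tilde h$ and the initial compatibility condition $\uhdot(0) = u^h(0)$, and extended constantly on $[t_i,T]$. Adding the vanishing term $\int_0^{t_i}(-\uhddot,v)_\htn\wrt t = 0$ and integrating by parts produces the boundary terms $-(\uhdot(t_i)-\tilde h,v(t_i))_\htn$ and $(\uhdot(0)-u^h(0),v(0))_\htn$, both of which collapse by the two ODE conditions. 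The constant extension on $[t_i,T]$ makes $\uhdot\equiv 0$ there, so $\int_0^{t_i}(\uhdot,\dot v)_\htn\wrt t = \int_0^T(\uhdot,\dot v)_\htn\wrt t$; together with the surviving $(u^h(0),v(0))_\htn$ this is precisely $(u^h,v)_\Vc$, so $M_i^* h = u^h$.

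Finally I would solve the ODE explicitly: $\uhddot = 0$ forces $u^h(t) = A + Bt$ on $(0,t_i)$, the compatibility condition $\uhdot(0)=u^h(0)$ gives $B = A$, and $\uhdot(t_i)=\tilde h$ gives $B = \tilde h$, whence $u^h(t) = \tilde h\,(1+t) = \Dinv h\,(t+1)$ for $t\in(0,t_i]$ and $u^h(t) = \Dinv h\,(t_i+1)$ for $t\in(t_i,T]$, which is \eqref{discrete}. I expect the only genuinely delicate point to be the bookkeeping of the boundary terms in the integration by parts combined with the constant-extension trick: one must check that the combination of ODE data kills every boundary contribution except $(u^h(0),v(0))_\htn$ and simultaneously promotes the $(0,t_i)$ integral to a $(0,T)$ integral, so that $(M_i v,h)_Y = (v,u^h)_\Vc$ holds for every $v\in\Vc$. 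The computational (Kaczmarz) remark then follows by simply observing that $u^h$ varies only on $[0,t_i]$ and is frozen thereafter.
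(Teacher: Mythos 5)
Your proposal is correct and coincides with the paper's own derivation: the paper likewise uses $\Dinv$ as the Riesz map between $L^2(\Omega)$ and $H^2(\Omega)\cap H^1_0(\Omega)$ (via the two elliptic problems \eqref{auxiliaryPDEs}, exactly the identity $(h,w)_\ltn=(\Dinv h,w)_\htn$ you derive), inserts the same auxiliary ODE $\uhddot=0$ on $(0,t_i)$ with $\uhdot(t_i)=\tilde h$, $\uhdot(0)=u^h(0)$, and integrates by parts against the equivalent inner product $(u,v)_\Vc=\int_0^T(\dot u,\dot v)_\htn\wrt t+(u(0),v(0))_\htn$ fixed in the proof of Proposition \ref{prop-adjoints-cont}, with the constant extension on $[t_i,T]$ promoting the integral to $(0,T)$. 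Your explicit solution $u^h(t)=\Dinv h\,(t+1)$ on $(0,t_i]$, frozen thereafter, reproduces \eqref{discrete}, so nothing is missing.
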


\subsection{Numerical results}
This section is dedicated to a range of numerical experiments carried out in two parallel settings: by way of analytic adjoints in Section \ref{sec:numerical-analytic}, and with Pytorch in Section \ref{sec:numerical-pytorch}.
\commA{While, in our experiments, we evaluate and compare the proposed method for different settings, such as varying the number of time measurements or noise, we highlight that the main purpose of these experiments is to show numerical feasibility of the proposed approach in principle, rather than providing highly optimized results. In particular, a tailored optimization of, e.g., regularization parameters and initialization strategies involved in our method might still be able to improve results significantly.}

For both settings \commA{(analytic adjoints and Pytorch)}, we use the following learning-informed PDE as special case of the one considered in Proposition \ref{prop-adjoints-cont}:%\todo{sign flip of $\Nt$ compared to code, deal with decomposition $\varphi = \psi + \phi$ and of $\Nt$}
\begin{equation}\label{eq:numerics}
\begin{alignedat}{3}
%& \dot{u}-\Delta u -  (\varphi + \phi)-\Nt(u) =0 \quad&&\mbox{ in }\Omega\times\ti\\
& \dot{u}-\Delta u - \varphi -\Nt(u) =0 \quad&&\mbox{ in }\Omega\times\ti\\
& u(0) = u_0=0 &&\mbox{ in }\Omega,
\end{alignedat}
\end{equation}
%\commE{The residual term $\phi$ is added solely for the purpose of conveniently having an analytic solution.}
%\todo{I removed all mentions of $\phi$, as 1) it is not explained or used at all further in the paper and will lead to questions from the reviewers, and 2) it causes confusion with how $\phi$ is used previously as a nonlinearity.}

We deal with  time-discrete measurements as in Corollary \ref{prop-adjoints-dis}, i.e., we use a time-discrete measurement operator $M:\Vc \rightarrow L^2(\Omega)^{n_T}$, with $n_T \in\N$, given as $M(u)_{t_i} = u(t_i)$ for $t_0 = 0$ and $t_i \in (0,T)$ with $i=1,\ldots,n_T-1$. We further let a noisy measurement of the initial state $u_0$ be given at timepoint $t=0$. % \textcolor{red}{and include $u_0$ only implicitly as parameter via $u(0)$.}\todo{Comment: What does this mean? Response: With this I meant that we did not explicitly include $u_0$ as parameter and added the constraint $u(0) = u_0$ in the code, but rather just left $u(0)$ vary unconstrained.\\ TN: for my tests I kept $u(0)=0$ fixed as it is a part of the model rather than the data. I think we can end the explanation without "and include $u_0$ only implicitly as parameter via $u(0)$".} 
Further, we consider two situations:
\begin{enumerate}
\item The source $\varphi$ in \eqref{eq:numerics} is fixed; we estimate the state $u$ and the nonlinearity $\Nt$ only, yielding a model operator $\Gc_\varphi:H^1(0,T;H^2\om\cap H^1_0\om) \times \Theta\to L^2(0,T;L^2\om) \times L^2(0,T;L^2\om)$ given as
\[ \Gc_\varphi(u,\theta) =  
\begin{pmatrix}
%\dot{u}-\Delta u -  (\varphi + \phi) -\Nt(u) \\
\dot{u}-\Delta u -  \varphi - \Nt(u) \\
Mu
\end{pmatrix}.
\]
\item The source $\varphi$ in \eqref{eq:numerics} is unknown, and we estimate the state $u$, the source $\varphi$ and the nonlinearity $\Nt$. This results in a model operator $\Gc:L^2(\Omega) \times H^1(0,T;H^2\om\cap H^1_0\om) \times \Theta\to L^2(0,T;L^2\om) \times L^2(0,T;L^2\om)$ given as
\[ \Gc(\varphi,u,\theta) =  
\begin{pmatrix}
%\dot{u}-\Delta u -  (\varphi + \phi)-\Nt(u) - \phi \\
\dot{u}-\Delta u - \varphi - \Nt(u) \\
Mu
\end{pmatrix}.
\]
%\todo{Comment: Both situations use the source decomposition $\to$ mention this or skip in both cases, also unify notation $\phi,\varphi$ in title of figs. Suggest: Move the fixed $\phi$ to \eqref{eq:numerics} and use $\varphi$ as unknown from thereon to be consistent with the whole paper. Response: Done, please check if you agree.\\ TN: I rephrased  a little bit to minimize the number of changes. Thus, I think for the rest, e.g. in \eqref{eq:numerics_minprob_state_par_net}, we do not need to signal that we change notation from $\varphi$ to $\phi$.}
\end{enumerate}
For these two settings, the special case of the learning problem \eqref{eq:main_identification_aao_setting} we consider here is given as
\begin{equation}\label{eq:numerics_minprob_state_net}
\min_{\substack{
(u^k)_k \in \Vc  \\
\theta \in \Theta
}} \sum_{k=1}^K \left( \| \Gc_\varphi(u^k,\theta) - (0,y^k) \|^2_{\Wc\times\Yc} + \|u^k\|_{\Vc}^2 \right) + \|\theta\|_2 ^2,
\end{equation}
for state- and nonlinearity identification and

\begin{equation}\label{eq:numerics_minprob_state_par_net}
\min_{\substack{
(\varphi^k,u^k)_k \in L^2(\Omega) \times \Vc \\
\theta \in \Theta
}} \sum_{k=1}^K \left( \| \Gc(\varphi^k,u^k,\theta) - (0,y^k) \|^2_{\Wc\times\Yc} + \|u^k\|_{\Vc}^2 + \|\varphi\|_{L^2(\Omega)}^2   \right) + \|\theta\|_2 ^2
\end{equation}
for state-, parameter and nonlinearity identification.

%Considering 
It is clear that identifying both the nonlinearity and the state introduces some ambiguities, since the PDE is for instance invariant under a constant offset in both terms (with flipped signs). To account for that, we always correct such a constant offset in the evaluation of our results. As the following remark shows, at least if the state $u$ is fixed appropriately, a constant shift is the only ambiguity that can occur. %\note{added $\frac{\partial}{\partial t}u(x,t)\neq 0$ as assumption, shortened a bit}
%We note the following regarding the ambiguities in the joint reconstruction of the nonlinearity and the parameter.\todo{add details}
\begin{rem}[Offsets]
With $\Omega_y:=u(\Omega\times(0,T))$ \commA{the range of $u$ for all $x\in\Omega, t\in(0,T)$, and given that} $\frac{\partial}{\partial t}u(x,t)\neq 0$, consider any solutions $f: \Omega_y\to\R$, $\varphi:\Omega\to\R$ of \eqref{ex0}. Then all solutions of \eqref{ex0} are on the form
$$
	\tilde{f}(y) := f(y) + c, \qquad \tilde{\varphi}(x) := \varphi(x) - c, \qquad c\in\R.
$$
Indeed, assume $\tilde{f}$, $\tilde{\varphi}$ are solutions, and define $g(y):=\tilde{f}(y) - f(y)$, $\Phi(x):=\tilde{\varphi}(x) - \varphi(x)$. Since these are solutions, one has $0 = g(u(x,t)) + \Phi(x)$ for all $(x,t)$ such that
%$$
%	0 = \tilde{f}(u(x,t)) + \tilde{\varphi}(x) = f(u(x,t)) + g(u(x,t)) + \varphi(x) + \Phi(x) = g(u(x,t)) + \Phi(x).
%$$
%Accordingly,
$$
	0 = -\frac{\partial}{\partial t}\Phi(x) = \frac{\partial}{\partial t}g(u(x,t)) = g'(u(x,t))\frac{\partial}{\partial t}u(x,t).
$$
As $\frac{\partial}{\partial t}u(x,t)\neq 0$ on $\Omega\times(0,T)$, it follows that $g'(y)\equiv 0$ on $u(\Omega\times(0,T))$, that is, there is some $c\in\R$ such that $c=g(u(x,t))=-\Phi(x)$ for all $(x,t)\in\Omega\times(0,T)$.

Moreover, finding \emph{any} solutions $f$, $\varphi$ and setting
$$
	c := \frac{\int_\Omega\varphi(x)\, \wrt x - \int_{\Omega_y}f(y)\,dy}{|\Omega| + |\Omega_y|}
$$
yields solutions $\tilde{f}(y):=f(y)+c$, $\tilde{\varphi}(x):=\varphi(x)-c$, minimizing $\|\varphi\|_{L^2(\Omega)}^2 + \|f\|_{L^2(\Omega_y)}^2$.
\end{rem}

\begin{rem}[Different measurement operators] \label{rem:measurement_operators}
\commA{In our experiments, we use a time-discrete measurement operator, and at times where data was measured, we assume measurements to be available in all of the domain. As will be seen in the next two subsections, reconstruction of the nonlinearity is possible in this case even with rather few time measurements.
A further extension of the measurement setup could be to use partial measurements also in space. While we expect similar results for approximately uniformly distributed partial measurements in space, highly localized measurements such as boundary measurements and measurements on subdomains are more challenging. In this case, we expect the reconstruction quality of the nonlinearity to strongly depend on the range of values the state $u$ admits in the observed points, but given the analytical focus of our paper, we leave this topic to future research.}
\end{rem}

\paragraph{Discretization.} In all but one experiment \commA{(in which we test different spatial and temporal resolutions)}, we consider a time interval $T=[0,0.1]$, uniformly discretized with $50$ time steps, and a space domain $\Omega = (0,1)$, uniformly discretized with $51$ grid points. The time-derivative as well as the Laplace operator was discretized with central differences. For the neural network $\Nt$, we consider a fully-connected network with $\tanh$ activation functions, and three single-channel hidden layers of width $[2,4,2]$ for all experiments. 
\commA{Note that this network architecture was chosen empirically by evaluating the approximation capacity of different architectures with respect to different nonlinear functions. For the sake of simplicity, we choose a simple, rather small architecture (satisfying the assumptions of our theory) for all experiments considered in this paper. In general, the architecture (together with regularization of the network parameters) must be chosen such that a balance between expressivity and overfitting may be reached (see for instance \cite[Sections 1.2.2 and 3]{Kutyniok21math_deep_learning}), but a detailed evaluation of different architectures is not within the scope of our work.
}
%The width of the hidden layers differs between the experiments with analytic adjoints and with pytorch, and will be specified in the corresponding sections.
%\todo{clarify numerical integration?} \commA{see paragraph \emph{PDE and adjoints.}}

\subsubsection{Implementation with analytic adjoints}\label{sec:numerical-analytic}
\paragraph{Set up.} In what follows, we apply Landweber iteration to solve the minimization problem \eqref{eq:main_identification_aao_setting}. The Landweber algorithm is implemented with the analytic adjoints computed in  Proposition \ref{prop-adjoints-cont} and Corollary \ref{prop-adjoints-dis}, ensuring that the backward propagation maps to the correct spaces. %We present two examples whose unknown physical parameters are the source term $\varphi$, the state $u$ the and nonlinearity $f$.

\emph{PDE and adjoints.} 
We employed finite difference methods to numerically compute the derivatives in the PDE model, as well as in the adjoints outlined in Proposition \ref{prop-adjoints-cont} and Corollary \ref{prop-adjoints-dis}. In particular, central difference quotients were used to approximate time and space derivatives. For numerical integration, we applied the trapezoidal rule. The inverse operator $\Dinv$ constructed in \eqref{auxiliaryPDEs} is called in each Landweber iteration.

\emph{Neural network.} %We chose networks with 3 hidden layers of [2,4,2] neurons to represent the unknown nonlinearity $f$. 
In the examples considered, $f: u(x)\mapsto f(u(x))$ is a real-valued smooth function, hence the suggested simple architecture with 3 hidden layers of $[2,4,2]$ neurons is appropriate. As the reconstruction is carried out in the all-at-once setting, the hyperparameters were estimated simultaneously with the state. The iterative update of the hyperparameters is done in the recursive fashion  \eqref{adjoint-Ntheta}.

\emph{Data measurement.} We work with measured data $y$ as limited snapshots of $u$ (see Corollary \ref{prop-adjoints-dis}) and evaluated examples in the case of no noise and $\delta=3\%$ \commB{relative} noise. \commB{Noise  $\epsilon$ is sampled from a Gaussian distribution $\Nc(0,1)$, and the measured data is $y=u+\delta\epsilon(\|u\|_2/\|\epsilon\|_2)$.}

\emph{Error.} Error between the reconstruction and the ground truth was measured in the corresponding norms, i.e. $X_\varphi$-norm for $\varphi$ and $\Wc$-norm for the PDE residual and the error of $f$. For $u$, $\Vc$-norm is the recommended measure; for simplicity, we displayed $L^2$-error.

\emph{Minimization problem.} The regularization parameters are $R_u=R_\varphi$ and $M_i(u)=10\,u(t_i)$ (c.f Corollary \ref{prop-adjoints-dis}). We implement an adaptive
Landweber step size scheme, i.e. if the PDE residual in the current step decreases, the step size is accepted, otherwise it is bisected. For noisy data, the iterations are terminated after a stopping rule via a discrepancy principle (c.f. \cite{KalNeuSch08}) is reached. 

\paragraph{Numerical results.} %Figure \ref{ex-linear-fulam} shows the result of identifying $\varphi,u$ and $f$ simultaneously. The top left panel (we denote by panel $(1,1)$) depicts the parameter reconstruction, where we observe that although initializing from zero, the reconstructed $\varphi$ perfectly fits the ground truth. Panels $(2,1),(2,2)$ display the evolution of the exact and estimated state $u$ in $(t,x)$. Their difference is projected in panel $(2,3)$, which is noticeably almost a zero plane. 
%In the bottom panels, the unknown nonlinearity $f$ is represented by a network of 3 hidden layers with $[2,3,2]$ neurons. In panel $(3,1)$, the learned nonlinearity $f$  is plotted on the range of $u$ as $f:[0, 0.5]\ni u(t,x)\mapsto f(u(t,x))$. The reconstruction nicely captures the main feature of the true $f$. The decreasing trends depicted in panel $(1,2), (1,3), (3,2)$ for the error in $\varphi,u,f$ confirm the approximation effect. Altogether, the PDE residual in panel $(3,3)$ numerically hints at the convergence of the cost functional to a minimizer. 

Figure \ref{ex-multidata} discusses the example where only a few snapshots of $u$ are measured; explicitly, we here have three measurements $y_j=u(t_j), j=1, 25, 50, n_T=3$. We test the performance using three datasets of differing source terms and states (i.e. $K=3$ in \eqref{eq:numerics_minprob_state_net}), but identical nonlinearity $f$. The top left panel (we denote by panel $(1,1)$) displays three measurements of dataset $u_1$, each line  here represents a plot of $u_1(t_i)$. The same plotting style applies for dataset 2 (panel $(1,2)$) and dataset 3 (panel $(1,3)$). The exact source $\varphi_i,i=1,2,3$ in three equations are given in panel $(2,1)$. In panel $(3,2)$, the nonlinearity $f$ is expressed via a network of 3 hidden layers with $[2,4,2]$ neurons. In this example, we identify $u_i, i=1,2,3$ (panels $(2,3-6)$) and $f$ (see Section \ref{sec:numerical-pytorch} for more experiments, including recovering physical parameters). The output errors in $f$ (panel $(3,3)$), $u$ (panels $(3,4-6)$) and PDE (panel $(2,3)$) hint at the convergence of the cost functional to a minimizer. The noisy case is presented in Figure \ref{ex-multidata-noise1}.

%\commA{to save space: drop case 5\% noise, combine Fig 1 and 2 in 1 page}

%This phenomenon is understandable if considered from the perspective of a training problem. With $(u_i,f(u_i)),i=1,2,3$ taking the role of training data, where the training inputs $u_i$ are not known exactly due to noisy measurements $y^\delta$, and the labels $f(u_i):=\dot{u}_i-\Delta u_i+cu_i$ are thus similarly imprecise. When both training input and labels are inaccurate, the imperfection in estimating $f$ is, therefore, predictable.

% Set 1
%\begin{figure} 
%\centering
%\includegraphics[width=1\columnwidth]{figs/file-Linear-Iden_NN-Parm}
%\caption{$f(u)=-u+1$. Find $f$. }\label{ex-linear-f}
%\end{figure}
%
%\begin{figure}
%\centering
%\includegraphics[width=1\columnwidth]{figs/file-Linear-Iden_NN_U-Parm}
%\caption{$f(u)=-u+1$. Find $f,u$. }\label{ex-linear-fu}
%\end{figure}

\begin{figure}[p] 
\centering
%\includegraphics[width=0.6\columnwidth]{figs/file-Linear-Iden_NN_U_lamda-Parm}
%\caption{$f(u)=-u+1$, noise free. Find $f,u,\lambda$. }\label{ex-linear-fulam}
%\vspace{0.5cm}
\includegraphics[width=1\columnwidth]{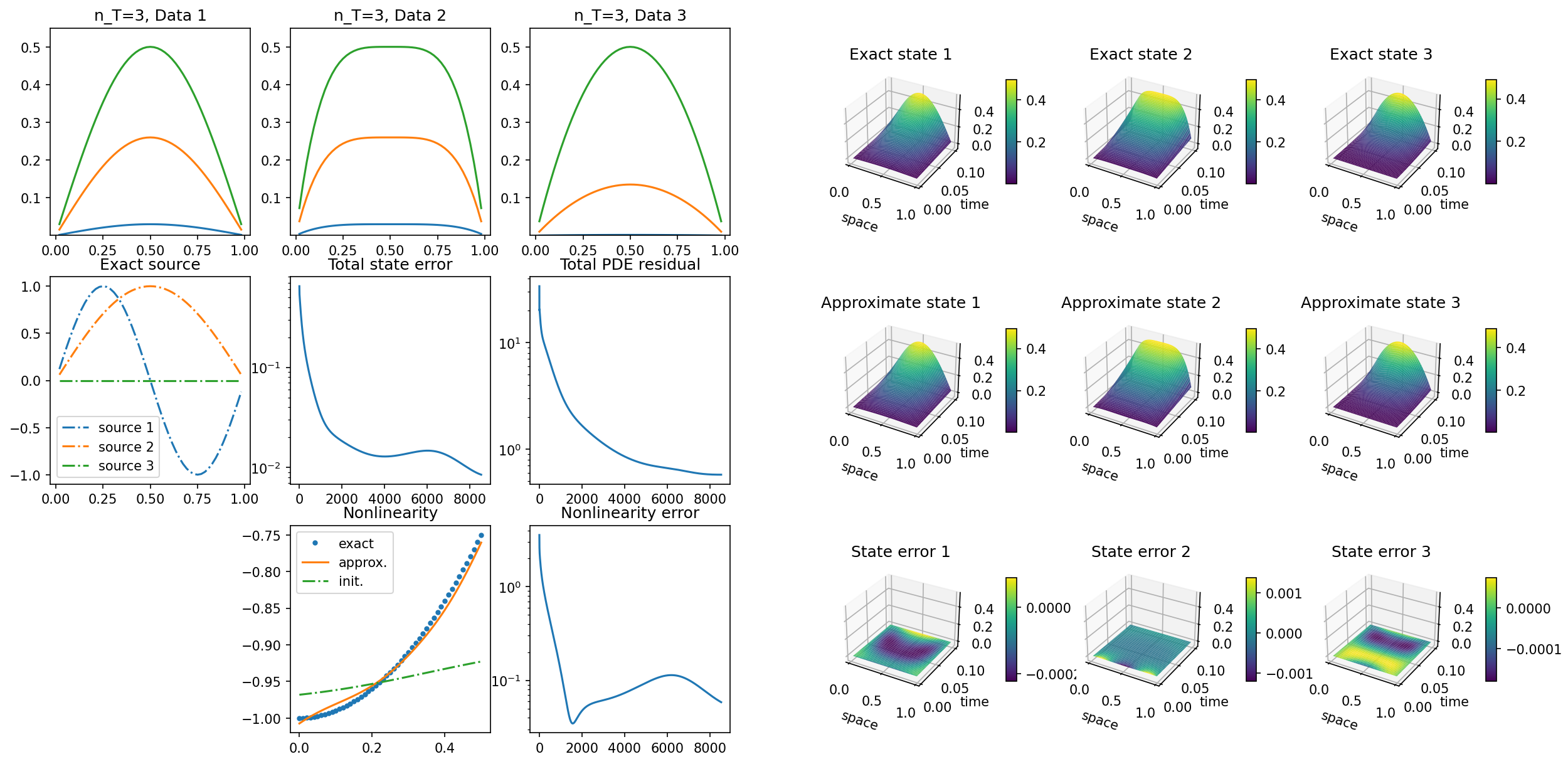}
\vspace{-0.5cm}
\caption{\commB{Numerical identification of state $u$ and ground-truth nonlinearity $f(u)=u^2-1$ in \eqref{eq:numerics} for three different values of the source term $\varphi$. In each case, three noise-free observations are given ($n_T=3$).} 
Plots 1-3 and 4-6 in the top line show the given data and the ground truth state for the three equations, respectively. The content of the remaining plots is described in the titles.
%Top left: given clean data, right: recovered state, bottom left: recovered nonlinearity (orange) compared to ground truth (blue).
}\label{ex-multidata}
\vspace{0.5cm}
\includegraphics[width=1\columnwidth]{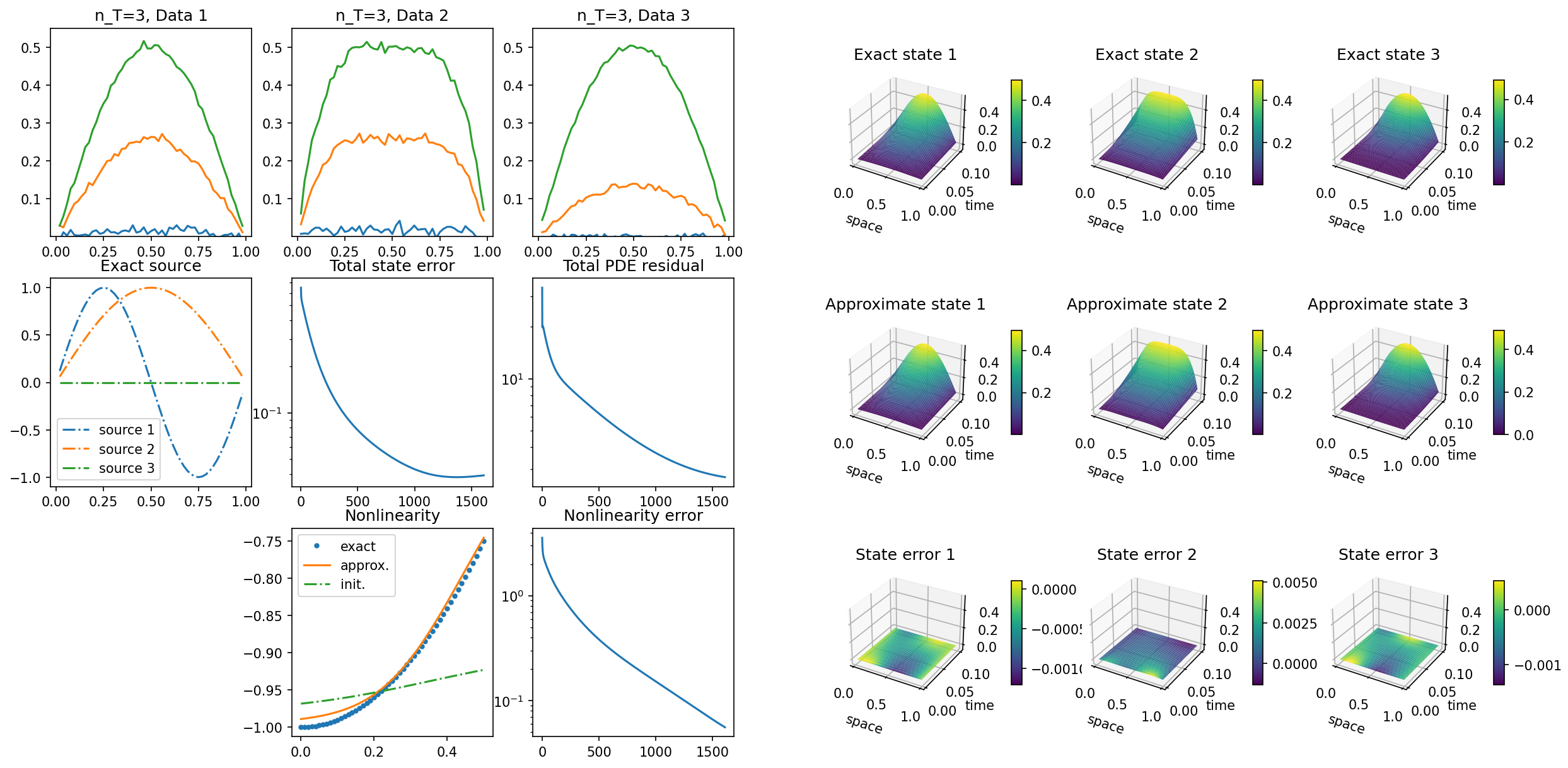}
\vspace{-0.5cm}
\caption{\commB{Numerical identification of state $u$ and ground-truth nonlinearity $f(u)=u^2-1$ in \eqref{eq:numerics} for three different values of the source term $\varphi$. In each case, three observations ($n_T=3$) with 3\% noise are given.} 
Plots 1-3 and 4-6 in the top line show the given data and the ground truth state for the three equations, respectively. The content of the remaining plots is described in the titles.
%Top left: given noisy data, right: recovered state, bottom left: recovered nonlinearity (orange) compared to ground truth (blue).
}\label{ex-multidata-noise1}

%\vspace{0.5cm}
%\includegraphics[width=0.7\columnwidth]{figs/file-3data_every25-noise3-Parm}
%\caption{$f(u)=u^2-1$, 5\% noise. Find $f,u$. }\label{ex-multidata-noise3}
\end{figure}

%\begin{figure}[p] 
%\centering
%\includegraphics[width=0.7\columnwidth]{figs/file-3data_every10-Parm}
%\caption{$f(u)=u^2-1$, noise free. Find $f,u$.}\label{ex-multidata}
%\vspace{0.5cm}
%\includegraphics[width=0.7\columnwidth]{figs/file-3data_every10-noise1-Parm}
%\caption{$f(u)=u^2-1$, 3\% noise.. Find $f,u$. }\label{ex-multidata-noise1}
%\vspace{0.5cm}
%\includegraphics[width=0.7\columnwidth]{figs/file-3data_every10-noise3-Parm}
%\caption{$f(u)=u^2-1$, 5\% noise. Find $f,u$. }\label{ex-multidata-noise3}
%\end{figure}

\subsubsection{Implementation with Pytorch}\label{sec:numerical-pytorch}

The experiments of this section were carried out using the Pytorch \cite{NEURIPS2019_9015}  package to numerically solve \eqref{eq:numerics_minprob_state_net} and \eqref{eq:numerics_minprob_state_par_net}. More specifically, we used the pre-implemented ADAM \cite{kingma2014adam} algorithm with automatic differentiation, a learning rate of $0.01$ and $10^4$ iterations for all experiments. \commA{In case noise is added to the data, we use Gaussian noise with zero mean and different standard deviations denoted by $\sigma$.}

\paragraph{Solving for state and nonlinearity}
In this paragraph we provide experiments for the learning problem with a single datum, where we solve for the state and the nonlinearity and test with increasing noise levels and \commA{reducing the number of observations}. We refer to Figure \ref{fig:quadratic_u_w} for the visualization of selected results, and to Table \ref{tbl:error_summary} (top) for error measures for all tested parameter combinations.

It can be observed that reconstruction of the nonlinearity works reasonable well even up to a rather low number of measurements together with a rather high noise level: The shape of the nonlinearity is reconstructed correctly in all cases except the one with three time measurements and a noise level of \commA{$\sigma = 0.1$}. 

\paragraph{Solving for parameter, state and nonlinearity}

In this section, we provide experiments for the learning problem with a single datum, where we solve for the parameter, the state and the nonlinearity and test with increasing noise levels and decreasing of observations. We refer to Figure \ref{fig:quadratic_u_w_phi} for the visualization of selected results and to Table \ref{tbl:error_summary} (bottom) for error measures for all tested parameter combinations.

It can again be observed that the reconstruction works rather well, in this case for both the nonlinearity and the parameter. Nevertheless, due to the additional degrees of freedom, the reconstruction breaks down earlier than in the case of identifying just the state and the nonlinearity.

\paragraph{Varying the discretization level}\commA{
In this paragraph, we test the result of different spatial and temporal resolution levels of the state. To this aim, we reproduce the experiment as in line 3 of Figure \ref{fig:quadratic_u_w_phi} (6 time measurements, $\delta=0.03$, quadratic nonlinearity, solving for nonlinearity and state) for $501 \times 500$ and $5001 \times 5000$ gridpoints in space $\times $ time (instead of $51$ as in the original example).}

\commA{The result can be found in Figure \ref{fig:quadratic_u_w_highres}. As can be observed there, changing the resolution level has only a minor effect on result, possibly slightly decreasing the reconstruction quality for the nonlinearity. We attribute this to the fact that the number of spatial grid points for the measurement was equally increased, see also Remark \ref{rem:measurement_operators} for a discussion of localized measurements.}

\paragraph{Reconstructing the nonlinearity from multiple samples}
In this paragraph we show numerically the effect of having different numbers of datapoints available, i.e., the effect of different numbers $K\in \N$ in \eqref{eq:numerics_minprob_state_par_net}. We again consider the identification of state, parameter and nonlinearity and use three time measurements and a noise level of $0.08$; a setting where the identification of the nonlinearity breaks down when having only a single datum available.

As can be observed in Figure \ref{fig:quadratic_u_w_phi_multsamples}, having multiple data samples improves reconstruction quality as expected. It is worth noting that here, even though each single parameter is reconstructed rather imperfectly with strong oscillations, the nonlinearity is recovered reasonable well already for three data samples. This is to be expected, as the nonlinearity is shared among the different measurements, while the parameter differs. 

\paragraph{Comparison of different approximation methods}
Here we evaluate the benefit of approximating the nonlinearity with a neural network, as compared to classical approximation methods. As test example, we consider the identification of the state and the nonlinearity only, using a noise level of 0.03 and 10 discrete time measurements. We consider four different ground-truth nonlinearities: $f(u) = 2 - u $ (linear), $f(u) = u^2 - 1$ (square), $f(u) = (u-0.1)(u-0.5)(141.6u-30)$ (polynomial) and $f(u) = \cos(3\pi u)$ (cosine). 

As approximation methods we use polynomials as well as trigonometric polynomials, where in both settings we allow for the same number ($=29$) of degrees of freedom as with the neural network approximation. For all methods, the same algorithm (ADAM) was used, and the regularization parameters for the state and the parameters of the nonlinearity were optimized by gridsearch to achieve the best performance.

The results can be seen in Figure \ref{fig:approx_comparision}. While each methods yields a good approximation in some cases, it can be observed that the polynomial approximation performs poorly both for the cosine-nonlinearity and the polynomial-nonlinearity (even tough the degrees of freedom would be sufficient to represent the later exactly). The trigonometric polynomial approximation on the other hand performs generally better, but produces some oscillations when approximating the square nonlinearity. The neural network approximation performs rather well for all types of nonlinearity, which might be interpreted as such that neural-network approximation is preferable when no structural information on the ground-truth nonlinearity is available. It should be noted, however, that due to non-convexity of the problem, this result depends many factors such as the choice of initialization and numerical algorithm. 
%While we strove to use a comparable setup for all methods (using, in particular, the same algorithm, initialization and regularization type), more tailed implementations for the different approximation types might yield different results.

\begin{figure}[p] 
\centering
\includegraphics[width=0.8\columnwidth]{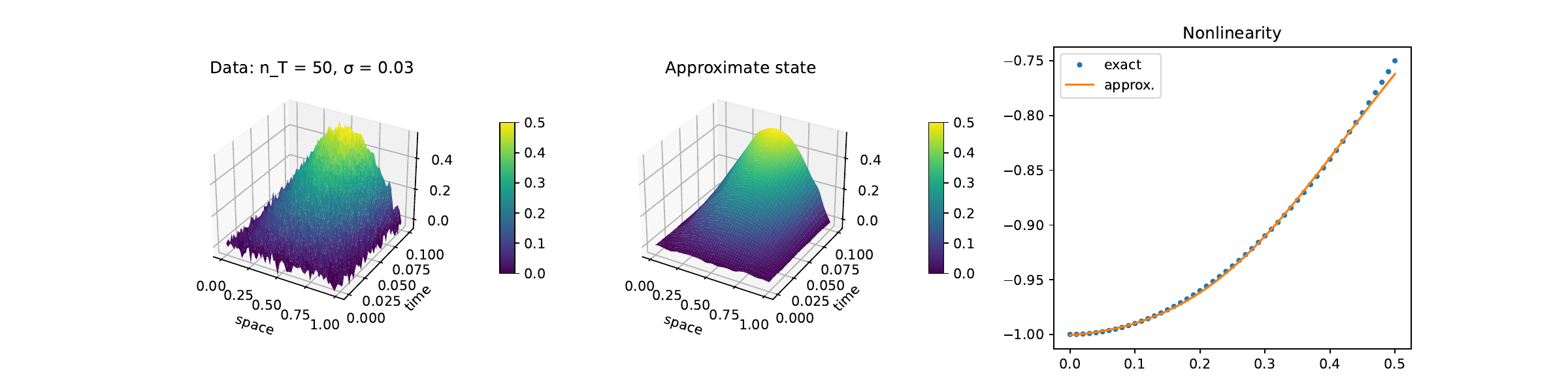}
\includegraphics[width=0.8\columnwidth]{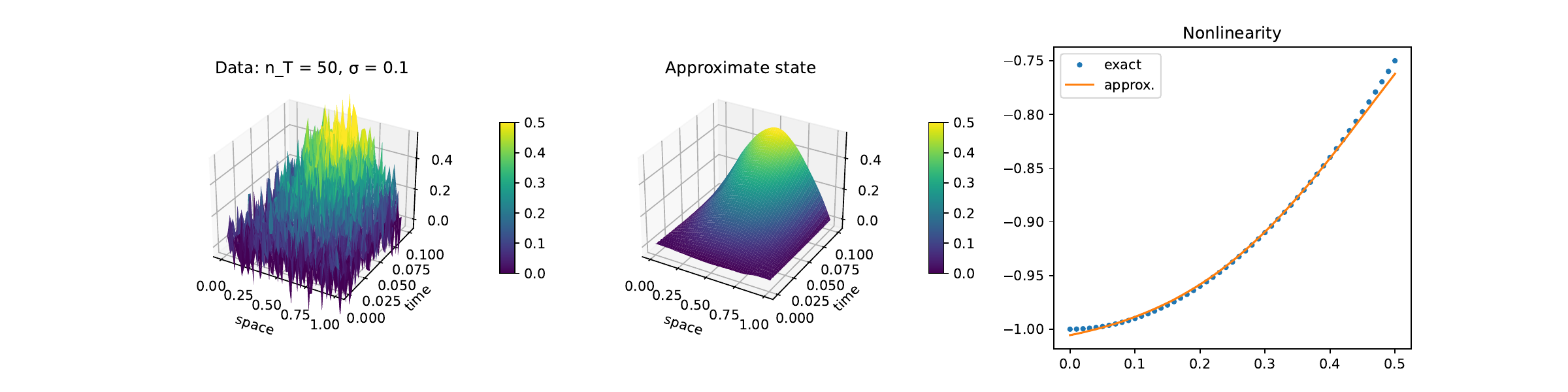}
\includegraphics[width=0.8\columnwidth]{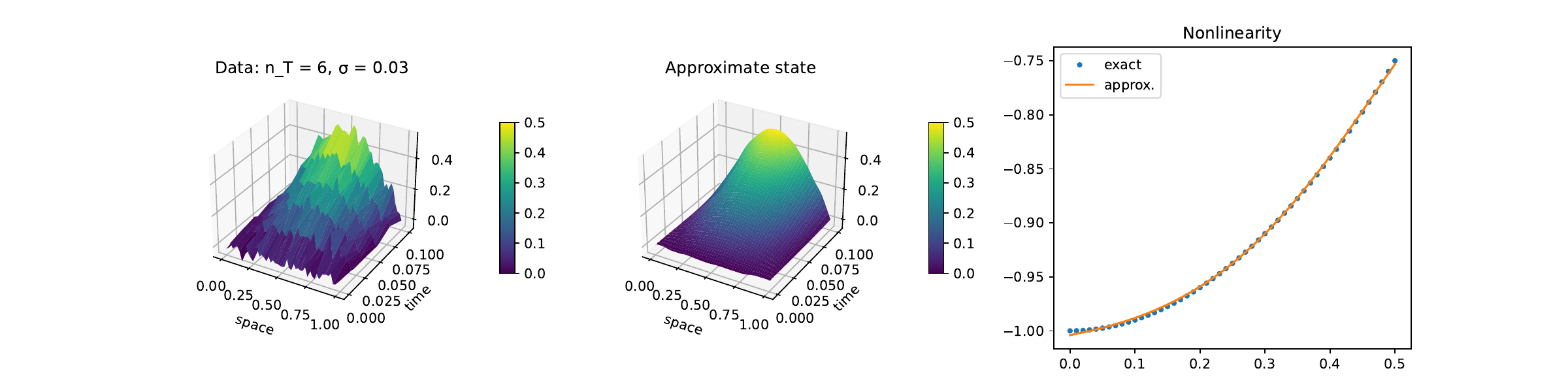}
\includegraphics[width=0.8\columnwidth]{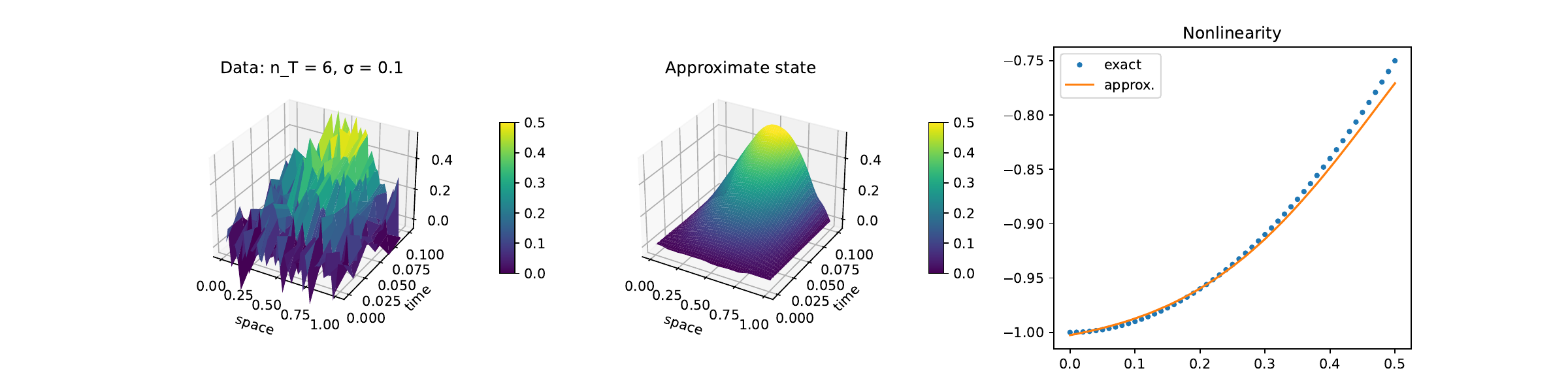}
\includegraphics[width=0.8\columnwidth]{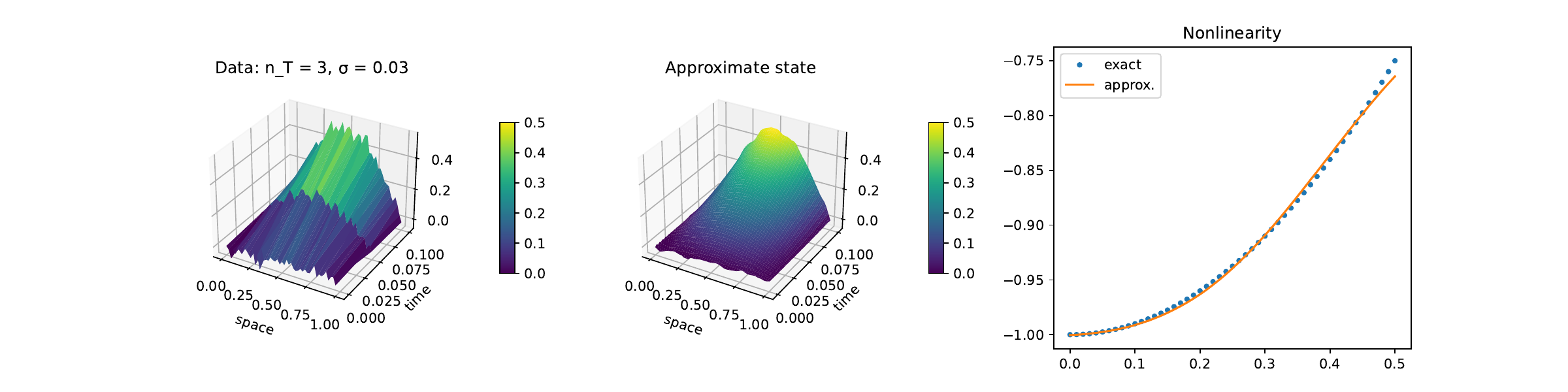}
\includegraphics[width=0.8\columnwidth]{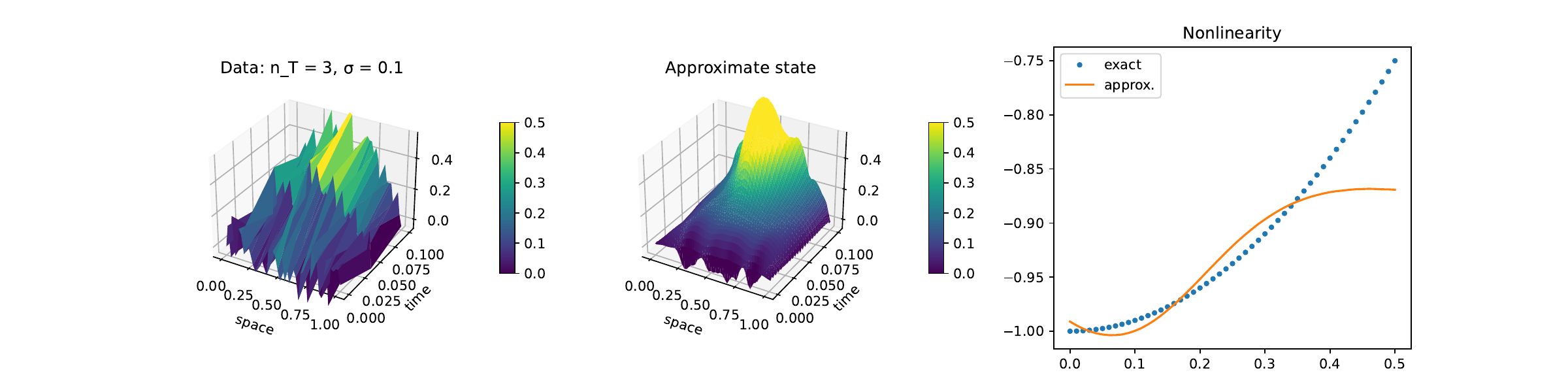}
\caption{\label{fig:quadratic_u_w} \commB{Numerical identification of state $u$ and ground-truth nonlinearity $f(u)=u^2-1$ in \eqref{eq:numerics}} for decreasing numbers of discrete observations (lines 1-2, 3-4 and 5-6) and increasing noise levels (even lines versus odd lines). Left: Given data, center: recovered state, right: recovered nonlinearity (orange) compared to ground truth (blue).}
\end{figure}

\begin{table}[p]
\begin{center}
\textbf{\commA{Recovering nonlinearity and state}}
\end{center}

\begin{tabularx}{\textwidth}{X>{\centering\arraybackslash}p{1.5cm}>{\centering\arraybackslash}p{1.5cm}>{\centering\arraybackslash}p{1.5cm}>{\centering\arraybackslash}p{1.5cm}>{\centering\arraybackslash}p{1.5cm}}

%0.01,0.03,0.05 0.1, 0.2

 & $\sigma = 0.01$ & $\sigma = 0.03$ & $\sigma = 0.05$ & $\sigma = 0.1$ & $\sigma = 0.2$ \\ \toprule 
 \textbf{Nonlinearity error} 
 \\ \midrule 
tmeas = 50 (= full)  & 1.94e-06 & 7.08e-06 & 1.22e-05 & 1.06e-05 & 1.91e-05 \\
tmeas =6  & 2.33e-06 & 1.89e-06 & 5.20e-06 & 4.07e-05 & 7.14e-05
 \\
tmeas = 3   & 3.58e-06 & 1.28e-05 & 6.03e-05 & 1.24e-03 & 1.51e-02
 \\ \midrule
\textbf{State error} \\ \midrule
tmeas = 50 (= full)  & 7.09e-06 & 1.68e-05 & 2.75e-05 & 4.53e-05 & 2.76e-05
 \\
tmeas = 6  & 7.45e-06 & 2.71e-05 & 2.04e-05 & 1.20e-04 & 1.52e-03
 \\
tmeas = 3    & 8.04e-06 & 2.40e-05 & 1.20e-04 & 7.70e-03 & 2.21e-02
 \\  \bottomrule
\end{tabularx}

\begin{center}
\textbf{\commA{Recovering nonlinearity, state and parameter}}
\end{center}

\begin{tabularx}{\textwidth}{X>{\centering\arraybackslash}p{1.5cm}>{\centering\arraybackslash}p{1.5cm}>{\centering\arraybackslash}p{1.5cm}>{\centering\arraybackslash}p{1.5cm}>{\centering\arraybackslash}p{1.5cm}}

%0.01,0.03,0.05 0.1, 0.2

 & $\sigma = 0.01$ & $\sigma = 0.03$ & $\sigma = 0.05$ & $\sigma = 0.08$ & $\sigma = 0.1$ \\ \toprule 
 \textbf{Nonlinearity error} 
 \\ \midrule 
tmeas = 50 (= full)  & 1.38e-06 & 3.97e-06 & 4.05e-06 & 1.85e-05 & 3.36e-05
 \\
tmeas =10  & 1.98e-06 & 8.25e-06 & 1.62e-05 & 1.22e-04 & 7.12e-01
 \\
tmeas = 6  & 4.22e-06 & 1.54e-05 & 3.86e-04 & 5.47e-04 & 5.33e-01
\\ \midrule
\textbf{Parameter error} \\ \midrule
tmeas = 50 (= full)   & 6.11e-05 & 1.15e-04 & 2.04e-04 & 3.59e-04 & 4.79e-04
 \\
tmeas = 10   & 1.44e-04 & 5.15e-04 & 9.13e-04 & 2.08e-03 & 4.89e-01
 \\
tmeas = 6  & 2.38e-04 & 7.23e-04 & 2.29e-03 & 4.36e-03 & 4.26e-01
 \\  \midrule
\textbf{State error} \\ \midrule
tmeas = 50 (= full)   & 1.73e-05 & 6.23e-05 & 1.63e-04 & 2.47e-04 & 3.24e-04
  \\
tmeas = 10  & 6.46e-05 & 1.91e-04 & 3.48e-04 & 8.45e-04 & 1.82e-02
\\
tmeas = 6   & 2.30e-04 & 4.44e-04 & 2.35e-03 & 3.48e-03 & 1.73e-02
 \\  \bottomrule
\end{tabularx}

\caption{\label{tbl:error_summary} Summary of errors in recovering nonlinearity and state (top) and in recovering nonlinearity, state and parameter (bottom) for different noise levels and different numbers of discrete measurements \commB{(denoted by \emph{tmeas})}.}

\end{table}

\begin{figure}[p] 
\centering
\includegraphics[width=0.97\columnwidth]{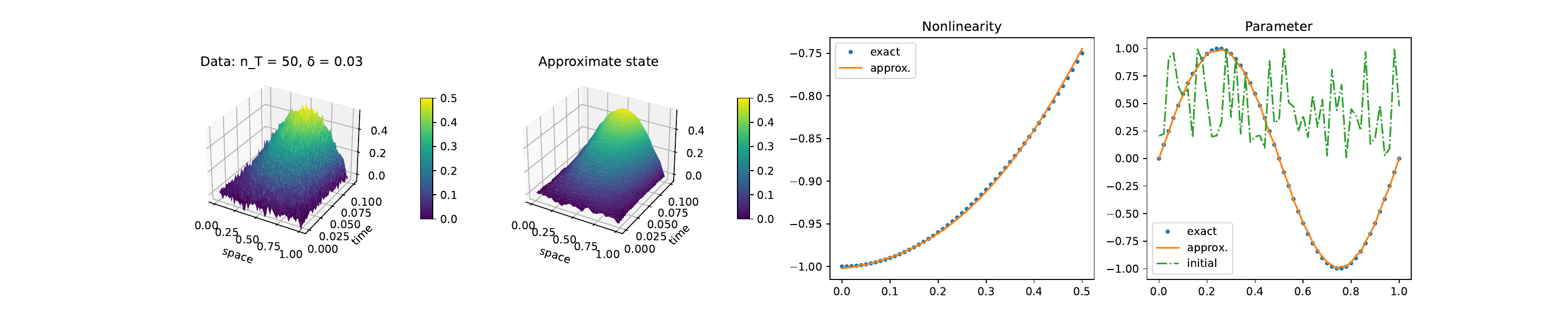}
\includegraphics[width=0.97\columnwidth]{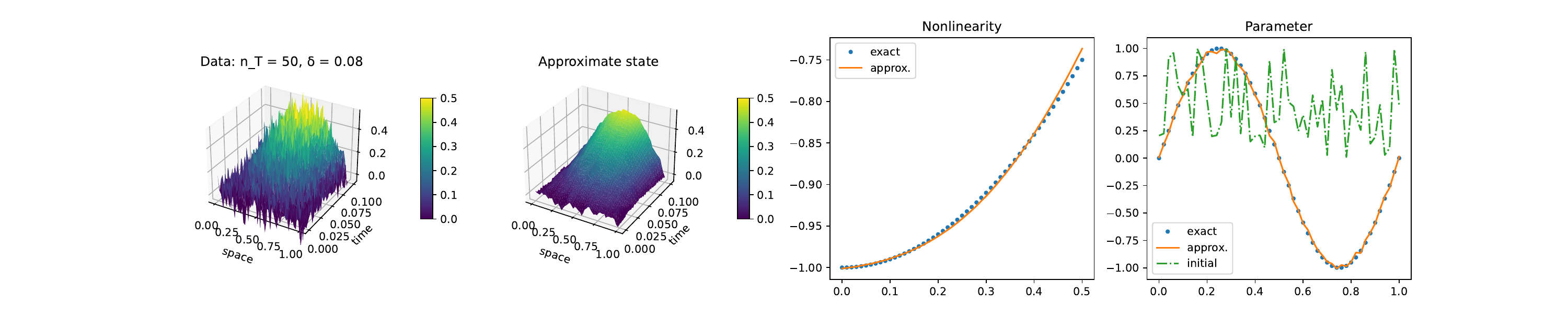}
\includegraphics[width=0.97\columnwidth]{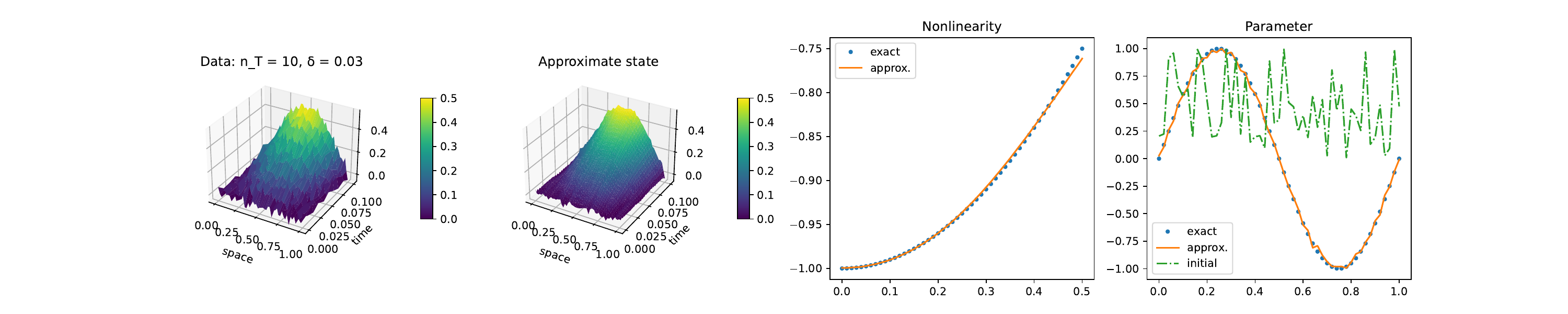}
\includegraphics[width=0.97\columnwidth]{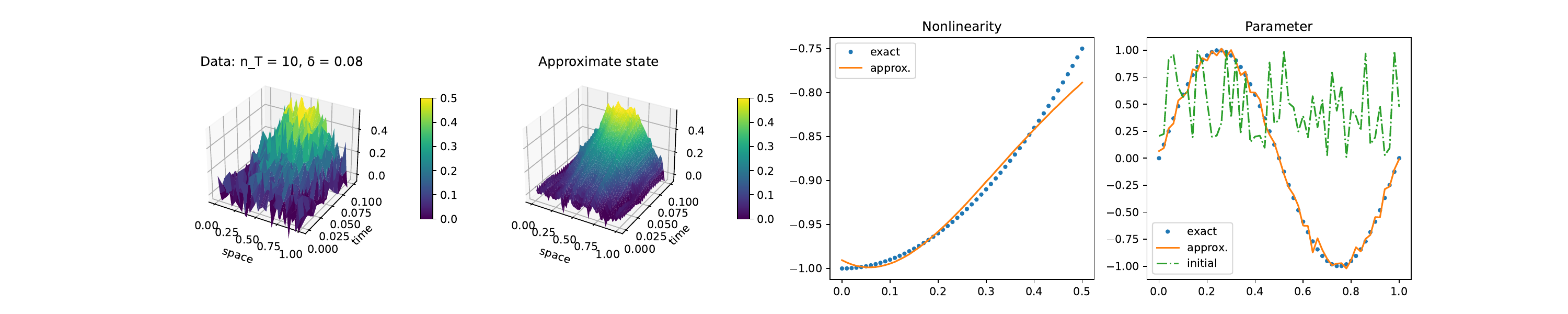}
\includegraphics[width=0.97\columnwidth]{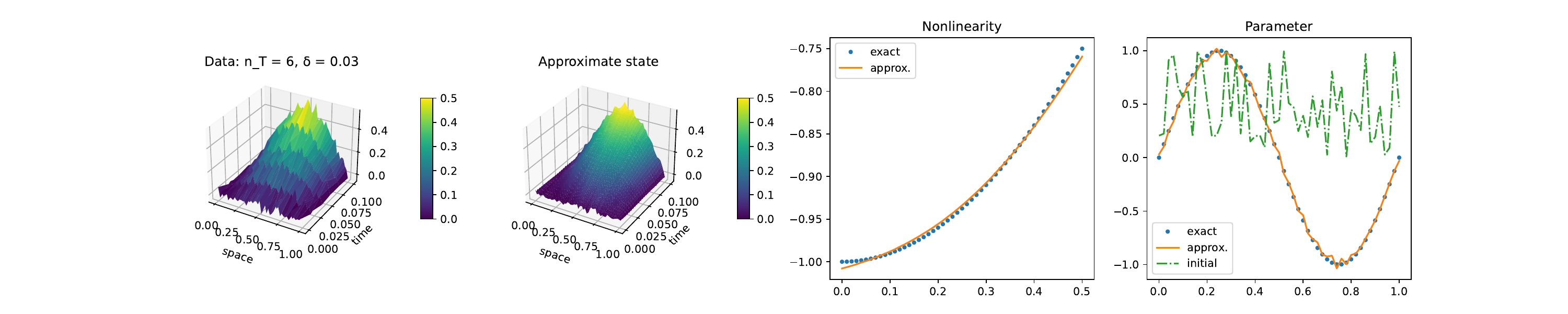}
\includegraphics[width=0.97\columnwidth]{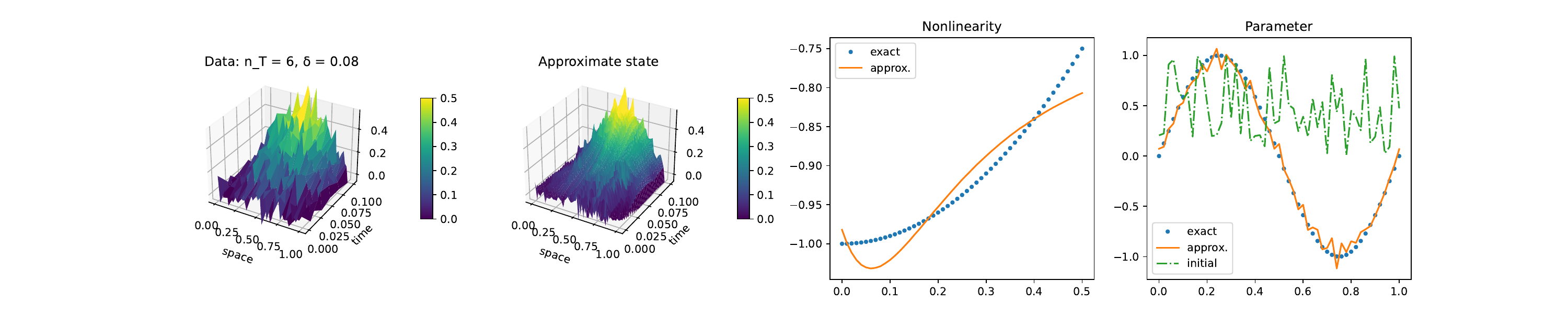}
\caption{\label{fig:quadratic_u_w_phi}
\commB{Numerical identification of state $u$, ground-truth nonlinearity $f(u)=u^2-1$ and the parameter $\varphi $ in \eqref{eq:numerics}} for decreasing numbers of discrete observations (lines 1-2, 3-4 and 5-6) and increasing noise levels (even lines versus odd lines). From left to right: Given data, recovered state, recovered nonlinearity (orange) compared to ground truth (blue), recovered parameter (orange) compared to ground truth (blue) and initialization (green)}
\end{figure}

%
%\begin{figure}[t] 
%\begin{tabularx}{\textwidth}{X>{\centering\arraybackslash}p{1.5cm}>{\centering\arraybackslash}p{1.5cm}>{\centering\arraybackslash}p{1.5cm}>{\centering\arraybackslash}p{1.5cm}>{\centering\arraybackslash}p{1.5cm}}
%
%%0.01,0.03,0.05 0.1, 0.2
%
% & $\sigma = 0.01$ & $\sigma = 0.03$ & $\sigma = 0.05$ & $\sigma = 0.08$ & $\sigma = 0.1$ \\ \toprule 
% \textbf{Nonlinearity error} 
% \\ \midrule 
%tmeas = 50 (= full) & 7.03e-05 & 2.02e-04 & 2.07e-04 & 9.43e-04 & 1.71e-03
% \\
%tmeas =10  & 1.01e-04 & 4.21e-04 & 8.28e-04 & 6.20e-03 & 3.63e+01
% \\
%tmeas = 6   & 2.15e-04 & 7.84e-04 & 1.97e-02 & 2.79e-02 & 2.72e+01
%\\ \midrule
%\textbf{Parameter error} \\ \midrule
%tmeas = 50 (= full)  & 3.12e-03 & 5.85e-03 & 1.04e-02 & 1.83e-02 & 2.44e-02
% \\
%tmeas = 10  & 7.33e-03 & 2.62e-02 & 4.66e-02 & 1.06e-01 & 2.50e+01
% \\
%tmeas = 6   & 1.21e-02 & 3.69e-02 & 1.17e-01 & 2.22e-01 & 2.17e+01
% \\  \midrule
%\textbf{State error} \\ \midrule
%tmeas = 50 (= full)  & 4.41e-02 & 1.59e-01 & 4.16e-01 & 6.29e-01 & 8.26e-01
%  \\
%tmeas = 10  & 1.65e-01 & 4.87e-01 & 8.87e-01 & 2.15e+00 & 4.65e+01
%\\
%tmeas = 6   & 5.86e-01 & 1.13e+00 & 6.00e+00 & 8.86e+00 & 4.42e+01
% \\  \bottomrule
%\end{tabularx}
%\caption{\label{tbl:quadratic_u_w_phi} Error in recovering the nonlinearity, the parameter and the state for different noise levels and different numbers of discrete measurements.}
%\end{figure}

\begin{figure}[p] 
\centering
\includegraphics[width=0.45\columnwidth, trim = 16cm 0.5cm 0 0.5cm, clip]{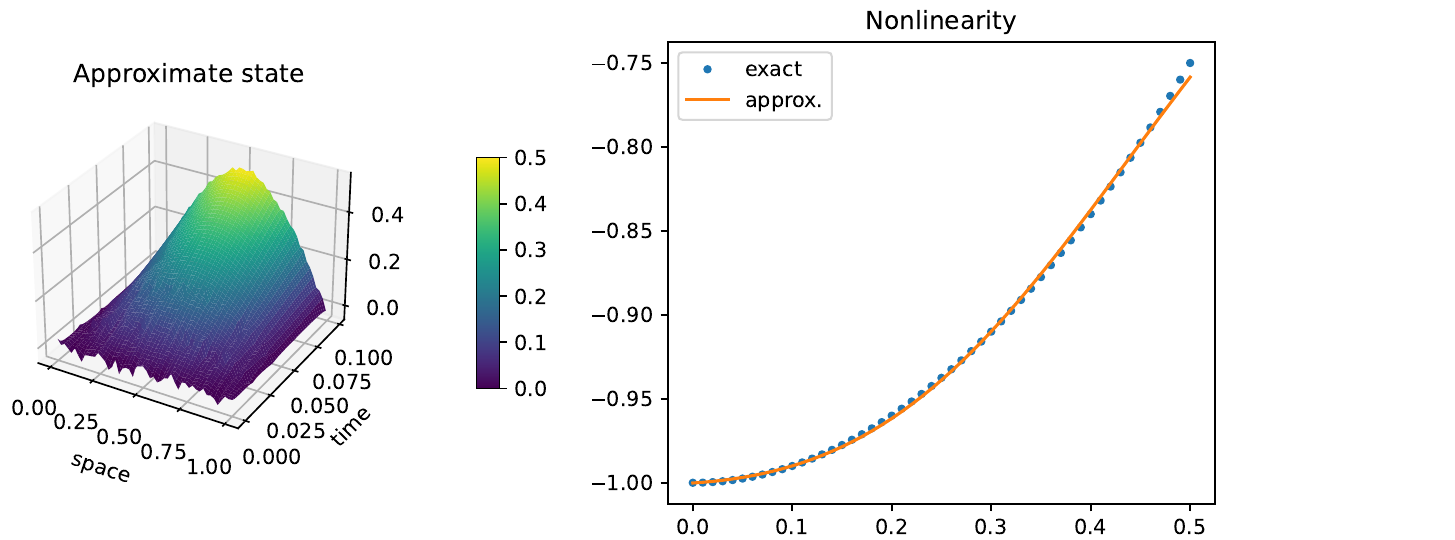}
\includegraphics[width=0.45\columnwidth, trim = 16cm 0.5cm 0 0.5cm, clip]{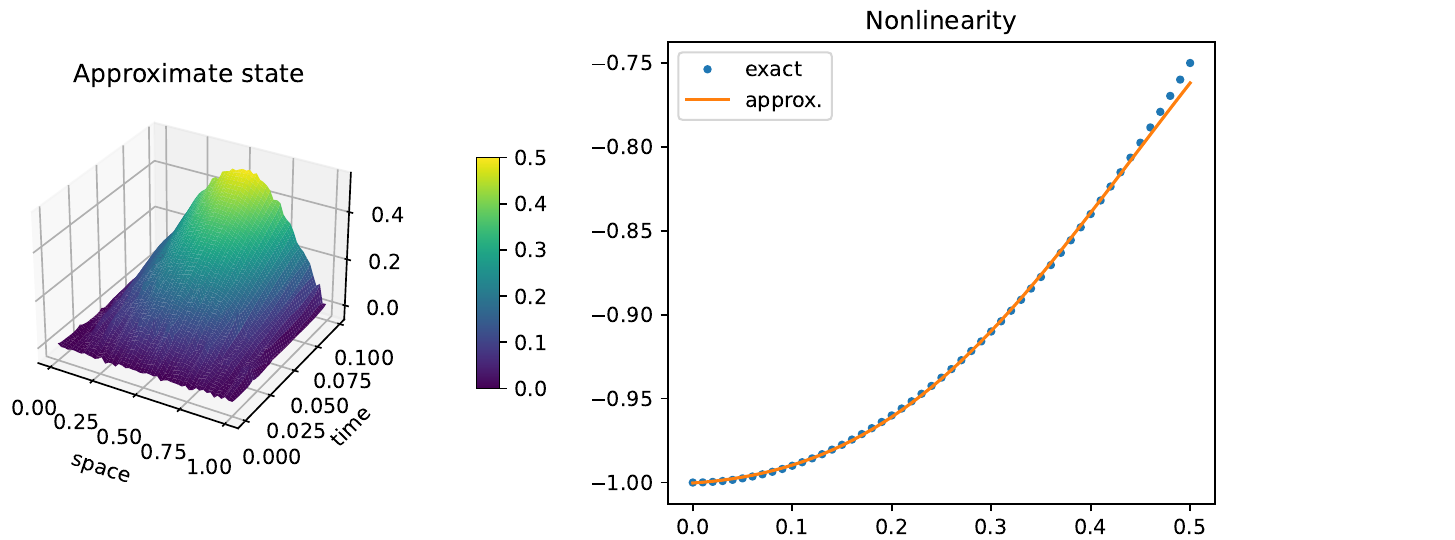}
\caption{\label{fig:quadratic_u_w_highres}
\commA{Identical setting as in line 3 of Figure \ref{fig:quadratic_u_w_phi} (6 time measurements, $\delta=0.03$, quadratic nonlinearity, solving for nonlinearity and state), but with different spatial $\times$ temporal resolution levels. Left to right: Plots 1 and 3: Approximate state obtained with $501 \times 500$ and $5001 \times 5000$ grid points, respectively. Plots 2 and 4: Recovered nonlinearity (orange) compared to ground truth (blue) for $501 \times 500$ and $5001 \times 5000$ grid points, respectively. The error in the nonlinearity is 3.60e-06 for $501 \times 500$ gridpoints and   5.74e-06  for $5001 \times 5000$ gridpoints (compare Table \ref{tbl:error_summary}).}}
\end{figure}

\newcommand\fsc{0.75}
\begin{figure}[p] 
\centering
\includegraphics[width=\fsc\columnwidth, trim = 0 0.5cm 0 0.5cm, clip]{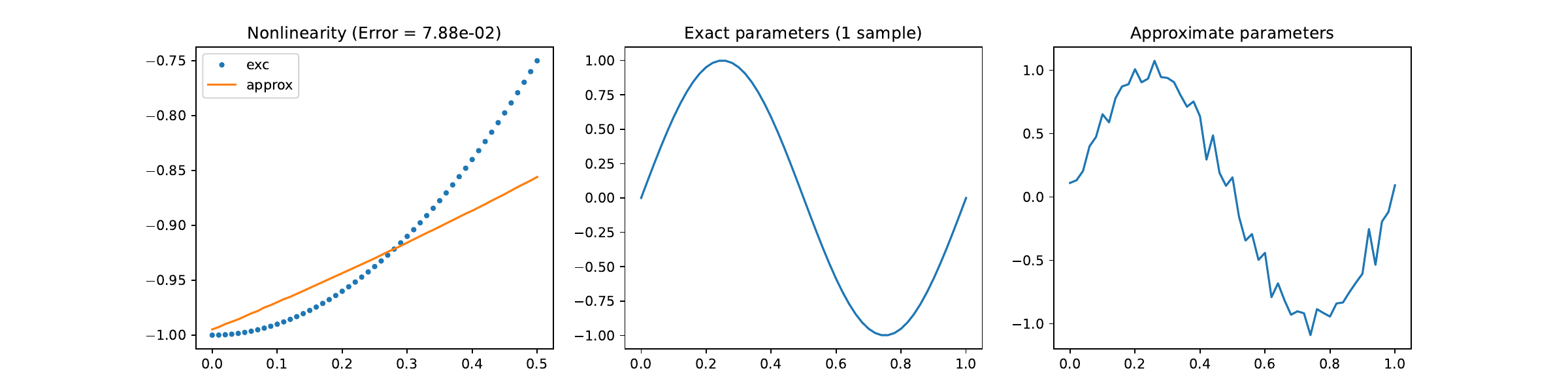}
\includegraphics[width=\fsc\columnwidth, trim = 0 0.5cm 0 0.5cm, clip]{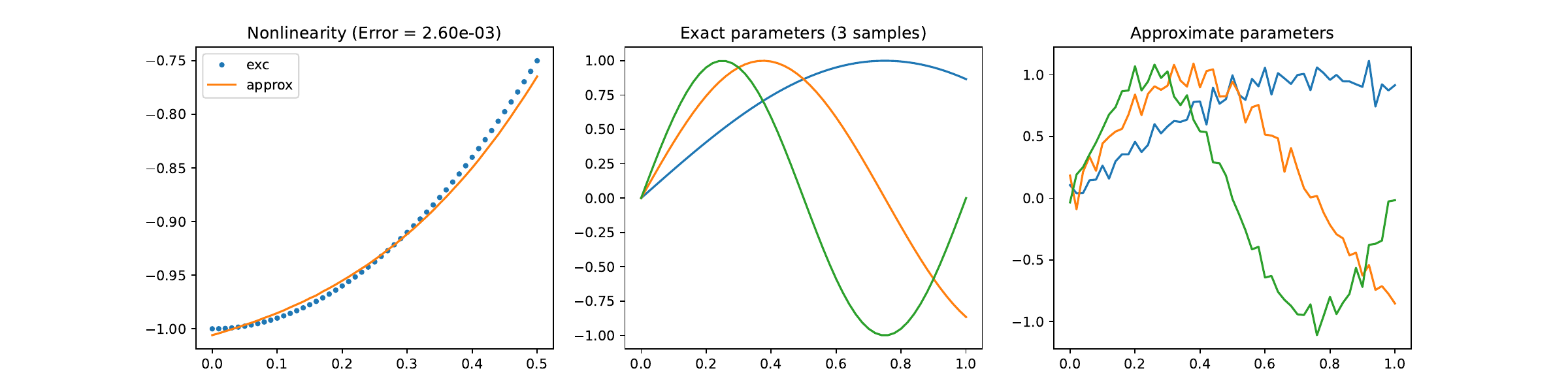}
\includegraphics[width=\fsc\columnwidth, trim = 0 0.5cm 0 0.5cm, clip]{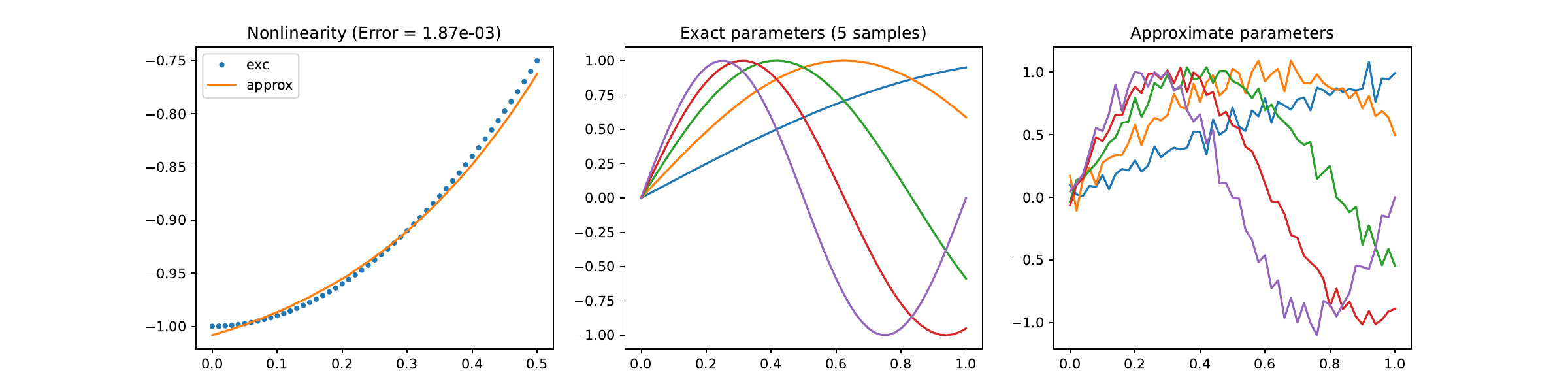}
\caption{\label{fig:quadratic_u_w_phi_multsamples}
\commB{Numerical identification of state $u$, ground-truth nonlinearity $f(u)=u^2-1$ and the parameter $\varphi $ in \eqref{eq:numerics}} for an increasing number of measurement data. Top to bottom: 1,3 and 5 measurements. Left to right: recovered nonlinearity (orange) compared to ground truth (blue), ground truth parameters, recovered parameters.}
\end{figure}

\renewcommand\fsc{0.185}
\begin{figure}[h] 
\centering
\includegraphics[width=\fsc\columnwidth, trim = 0 0.5cm 0 0.5cm, clip]{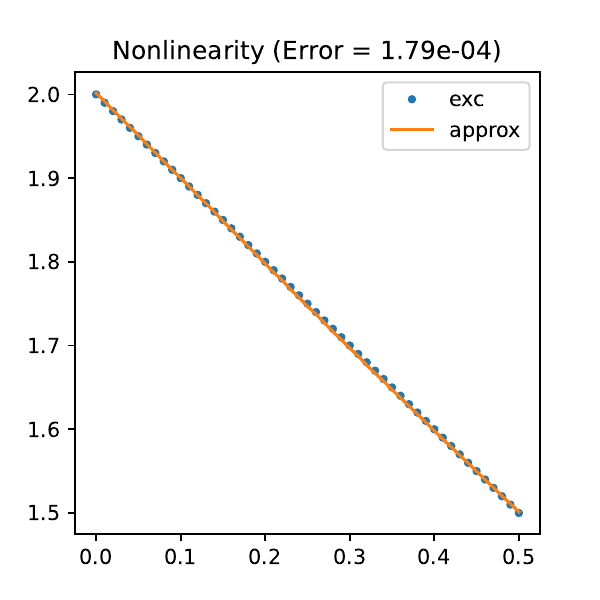}
\includegraphics[width=\fsc\columnwidth, trim = 0 0.5cm 0 0.5cm, clip]{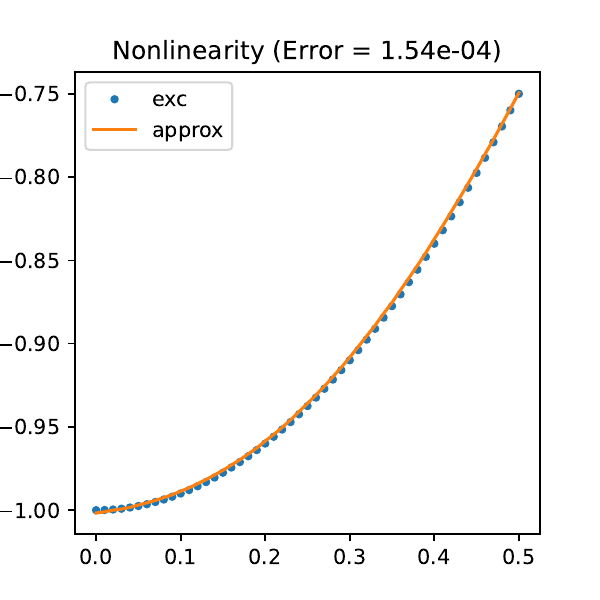}
\includegraphics[width=\fsc\columnwidth, trim = 0 0.5cm 0 0.5cm, clip]{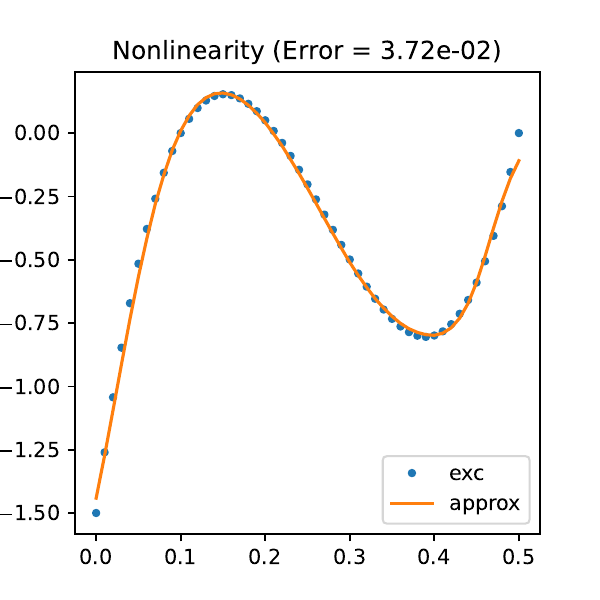}
\includegraphics[width=\fsc\columnwidth, trim = 0 0.5cm 0 0.5cm, clip]{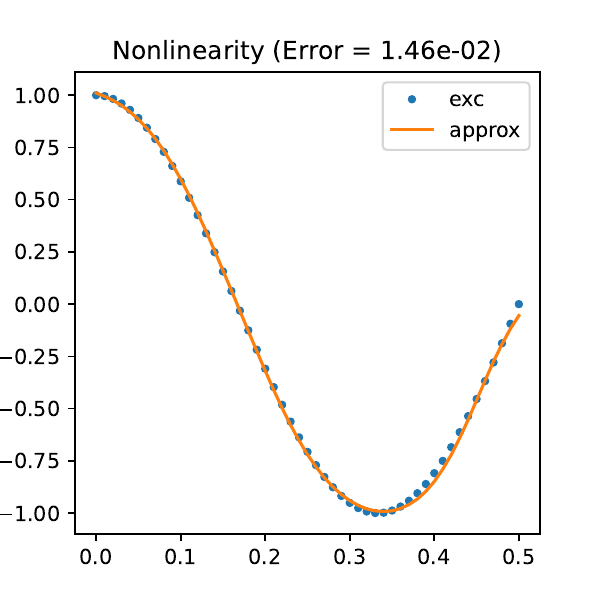}

\includegraphics[width=\fsc\columnwidth, trim = 0 0.5cm 0 0.5cm, clip]{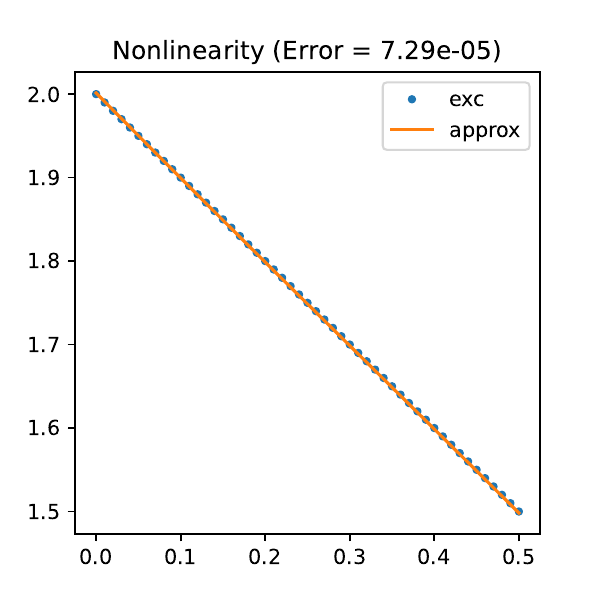}
\includegraphics[width=\fsc\columnwidth, trim = 0 0.5cm 0 0.5cm, clip]{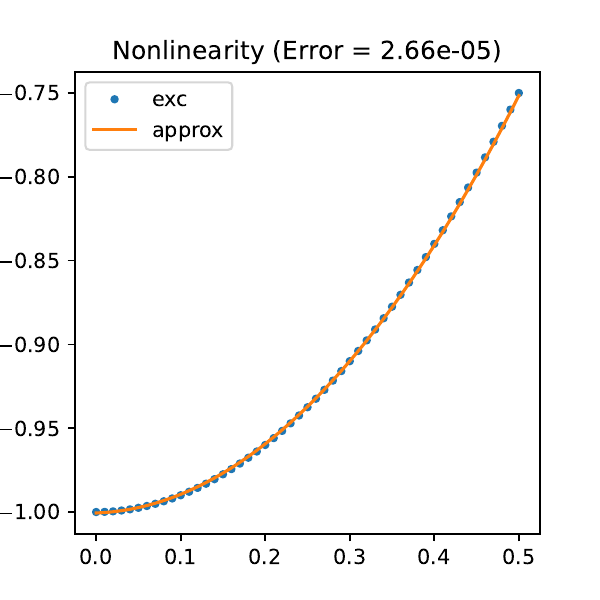}
\includegraphics[width=\fsc\columnwidth, trim = 0 0.5cm 0 0.5cm, clip]{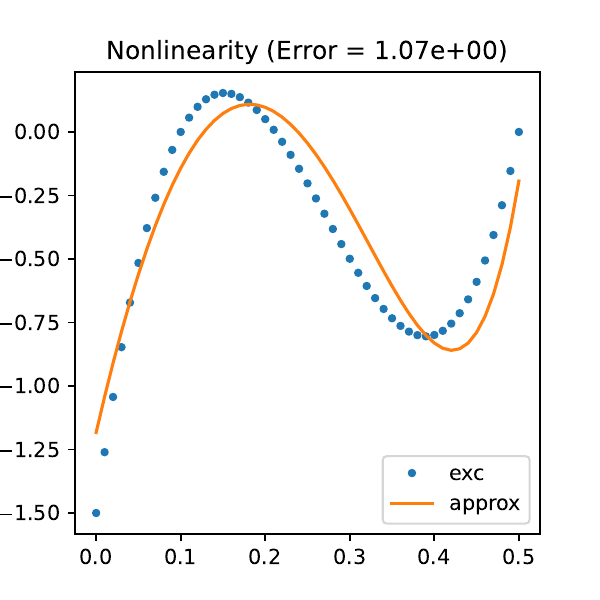}
\includegraphics[width=\fsc\columnwidth, trim = 0 0.5cm 0 0.5cm, clip]{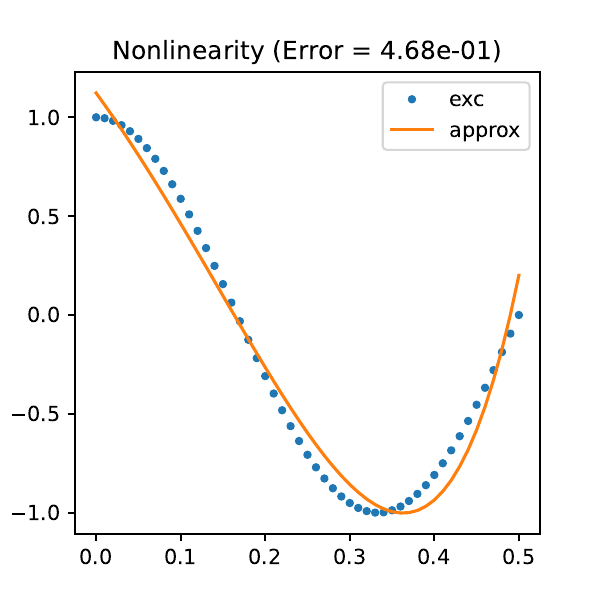}

\includegraphics[width=\fsc\columnwidth, trim = 0 0.5cm 0 0.5cm, clip]{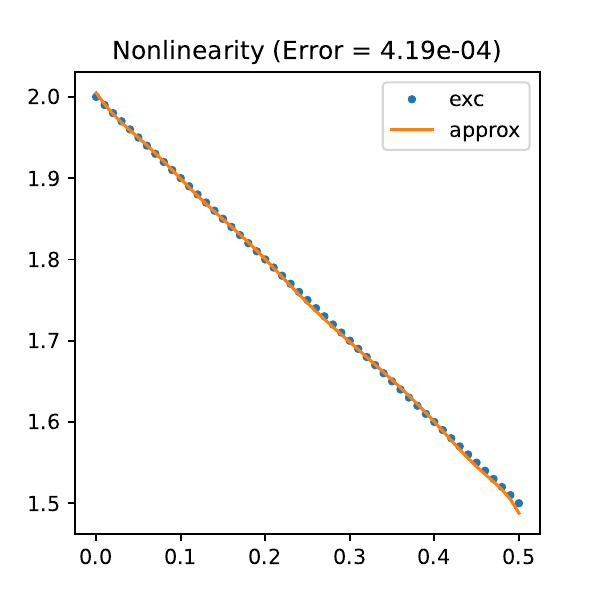}
\includegraphics[width=\fsc\columnwidth, trim = 0 0.5cm 0 0.5cm, clip]{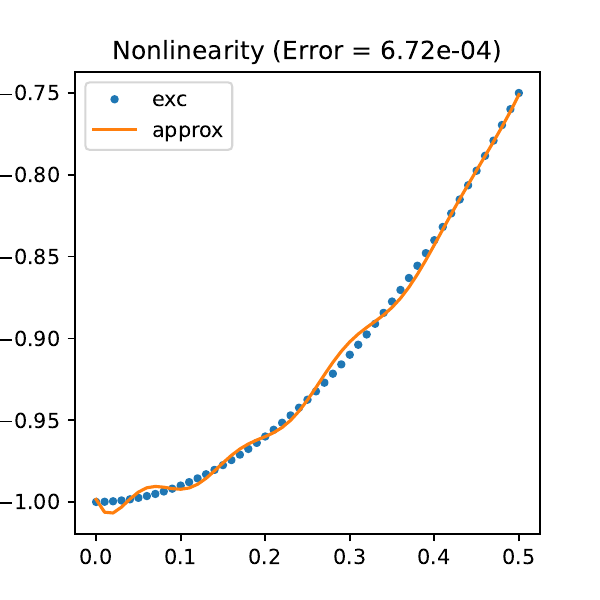}
\includegraphics[width=\fsc\columnwidth, trim = 0 0.5cm 0 0.5cm, clip]{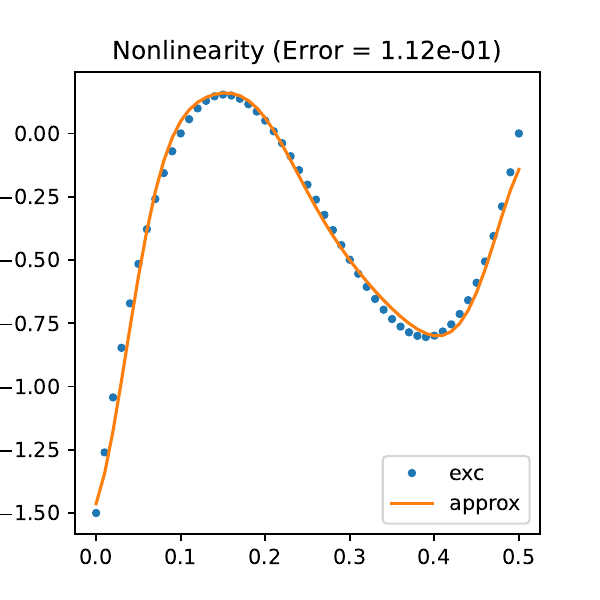}
\includegraphics[width=\fsc\columnwidth, trim = 0 0.5cm 0 0.5cm, clip]{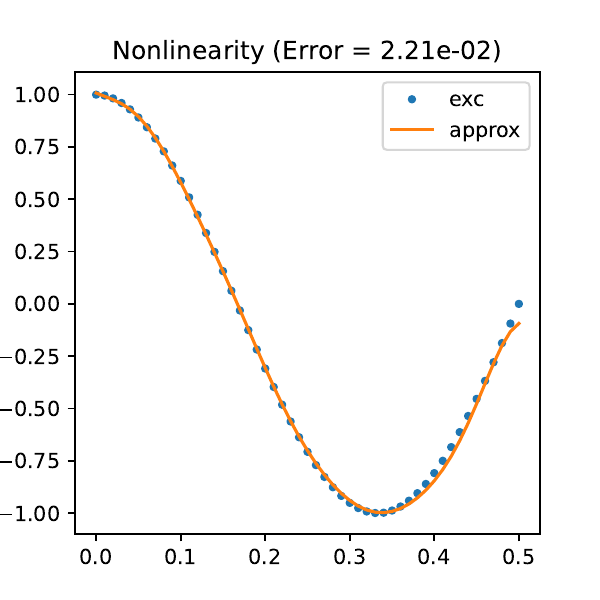}

\caption{\label{fig:approx_comparision}
Comparison of different approximation methods. From top to bottom: Neural network, polynomial, trigonometric polynomial. From left to right: \commB{ground-truth nonlinearity} $f(u) = 2 - u $, $f(u) = u^2 - 1$, $f(u) = (u-0.1)(u-0.5)(141.6u-30)$ and $f(u) = \cos(3\pi u)$.}
\end{figure}

\section{Conclusion}
We have considered the problem of learning a partially unknown PDE model from data, in a situation where access to the state is possible only indirectly via incomplete, noisy observations of a parameter-dependent system with unknown physical parameters. The unknown part of the PDE model was assumed to be a nonlinearity acting pointwise, and was approximated via a neural network. Using an all-at-once formulation, the resulting minimization problem was analyzed and well-posedness was obtained for a general setting as well a concrete application. Furthermore, a \commB{tangential cone condition} was ensured for the \commB{neural network part} of a resulting learning-informed parameter identification problem, thereby providing the basis for local uniqueness and convergence results. Finally, numerical experiments using two different types of implementation strategies have confirmed practical feasibility of the proposed approach.

\paragraph{Acknowledgments.}
The authors wish to thank both reviewers for fruitful comments leading to an improved version of the manuscript.

\section*{Appendices}\label{sec:append}
\renewcommand{\thesubsection}{\Alph{subsection}}
\newtheorem*{lem*}{Lemma}
\newtheorem*{prop*}{Proposition}
\subsection{Auxiliary results}\label{appendix-Roubicek} \commA{In this appendix, for convenience of the reader, we provide some definitions and results of \cite{Roubicek} and \cite{KalNeuSch08} that are relevant for our work.}

\commA{For $V_1$ a Banach space and $V_2$ a locally convex space, $V_1\subseteq V_2$, we define
\[W^{1,p,q}([0,T]; V_1, V_2):=\{u\in L^p([0,T];V_1) \mid \dot{u}\in L^q([0,T];V_2)\} \qquad 1\leq p,q\leq+\infty.\]
\begin{lem*}\cite[Lemma 7.3.]{Roubicek}
Let $V\subseteq H\cong H^*\subseteq V^*$, and $p'= p/(p-1)$ be the conjugate exponent to $p\in[1,+\infty]$. Then $W^{1,p,p'}([0,T]; V, V^*) \embed C([0,T];H)$ (a continuous embedding), and the following integration-by-parts formula holds for any $u, v \in W^{1,p,p'}([0,T]; V, V^*)$ and any $0\leq t_1\leq t_2\leq T$:
\[(u(t_2),v(t_2))-(u(t_1),v(t_1))=\int_{t_1}^{t_2}\langle\dot{u}(t),v(t)\rangle_{V^*,V}+\langle u(t),\dot{v}(t)\rangle_{V,V^*}\wrt t.\]
\end{lem*}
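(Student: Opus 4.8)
The plan is to reduce everything to the case of smooth $V$-valued functions by a density argument, to prove the integration-by-parts formula there by the classical chain rule, and then to recover both the embedding and the formula in the limit through an energy estimate. Throughout, the Gelfand-triple structure $V \embed H \cong H^* \embed V^*$ is used so that, whenever $\dot{u}(t)\in H$, the duality pairing $\langle \dot u(t), v(t)\rangle_{V^*,V}$ reduces to the Hilbert inner product $(\dot u(t), v(t))_H$.

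First I would establish density: the space $C^1([0,T];V)$ is dense in $W^{1,p,p'}([0,T];V,V^*)$. This is done by mollification in time, $u_\varepsilon := \rho_\varepsilon * \tilde u$, where $\tilde u$ is a suitable extension of $u$ to a neighbourhood of $[0,T]$ and $\rho_\varepsilon$ a standard mollifier; one checks $u_\varepsilon \to u$ in $L^p([0,T];V)$ and $\dot u_\varepsilon = \rho_\varepsilon * \dot{\tilde u} \to \dot u$ in $L^{p'}([0,T];V^*)$. For such smooth functions the map $t \mapsto (u(t),v(t))_H$ is continuously differentiable with $\frac{d}{dt}(u,v)_H = (\dot u, v)_H + (u, \dot v)_H = \langle \dot u, v\rangle_{V^*,V} + \langle u, \dot v\rangle_{V,V^*}$, so the claimed identity follows from the fundamental theorem of calculus.

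The core step is the energy estimate. Taking $u=v$ in the smooth identity gives, for $w \in C^1([0,T];V)$ and $0\le s\le t\le T$,
\[ \|w(t)\|_H^2 = \|w(s)\|_H^2 + 2\int_s^t \langle \dot w(\tau), w(\tau)\rangle_{V^*,V}\,\wrt\tau, \]
and the integrand is controlled by Hölder's inequality through $\|\dot w\|_{L^{p'}(V^*)}\|w\|_{L^p(V)}$. Applying this to a difference $w = u_\varepsilon - u_\eta$ of mollifications and choosing a base point $s_0$ with $\|w(s_0)\|_H^p \le \frac1T\int_0^T \|w\|_H^p\,\wrt\tau$ (which tends to $0$ as $\varepsilon,\eta\to0$ since $w\to0$ in $L^p([0,T];H)$ by $V\embed H$), we obtain $\sup_t \|u_\varepsilon(t)-u_\eta(t)\|_H^2 \to 0$. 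Hence $(u_\varepsilon)$ is Cauchy in $C([0,T];H)$; its limit is a continuous $H$-valued representative of $u$, and the same estimate applied to a single function yields $\|u\|_{C([0,T];H)} \le C\|u\|_{W^{1,p,p'}}$, i.e. the continuous embedding.

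Finally, passing to the limit $\varepsilon\to0$ in the smooth integration-by-parts identity is straightforward: the left-hand side converges by the $C([0,T];H)$ convergence just established, and the right-hand side by the $L^p(V)$ and $L^{p'}(V^*)$ convergence of $u_\varepsilon,v_\varepsilon$ and their derivatives together with Hölder, giving the formula for all $u,v \in W^{1,p,p'}$. The main obstacle is the density step: mollification in time requires extending $u$ past the endpoints $0$ and $T$ in a way compatible with both the $L^p(V)$ and $L^{p'}(V^*)$ topologies, so that $\dot u_\varepsilon$ converges up to the boundary; this is the only genuinely technical point, the remainder being the standard Gelfand-triple energy argument.
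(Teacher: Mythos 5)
The paper does not actually prove this statement: it is quoted verbatim in Appendix A from \cite[Lemma 7.3]{Roubicek} as an auxiliary result, so the only meaningful comparison is with the proof in that reference. Your argument is precisely that classical proof scheme (extension and mollification in time, the chain rule for $t \mapsto (u(t),v(t))_H$ on smooth functions, the energy identity with H\"older's inequality to show the mollified sequence is Cauchy in $C([0,T];H)$, then passage to the limit), and for $p \in (1,\infty)$ it is complete and correct, including your correct identification of the extension past the endpoints as the one technical point.

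There is, however, one step that fails as literally stated: the lemma allows $p \in [1,+\infty]$, and at the endpoint exponents your density claim is false. For $p=\infty$, mollifications of a general $u \in L^\infty([0,T];V)$ do \emph{not} converge in the $L^\infty([0,T];V)$ norm (they do only when $u$ has a $V$-continuous representative), and symmetrically for $p=1$ one has $p'=\infty$ and $\rho_\varepsilon * \dot{\tilde u} \not\to \dot u$ in $L^\infty([0,T];V^*)$ in general; so $C^1([0,T];V)$ is not dense in $W^{1,p,p'}([0,T];V,V^*)$ in these cases. The repair is standard and uses only ingredients already present in your argument: the mollifications remain \emph{bounded} in the $L^\infty$-based component and converge in every $L^q$ with $q<\infty$; in your H\"older estimate for the cross term $\int \langle \dot w, w \rangle_{V^*,V}$ you pair the factor that converges strongly (the one with finite exponent) against the factor that is merely bounded (the one with exponent $\infty$), which still sends the term to zero; the base point $s_0$ is chosen using $L^q(H)$-convergence for some finite $q$; and in the final limit passage a weak-* convergent bounded sequence paired against a strongly convergent one suffices. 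With this adjustment your proof covers the full range $p \in [1,+\infty]$ claimed in the statement.
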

\begin{lem*}\cite[Lemma 7.7 (Aubin and Lions)]{Roubicek}
Let $V_1$, $V_2$ be Banach spaces,
$V_3$ a metrizable Hausdorff locally convex space, such that $V_1$ is separable and reflexive,
$V_1 \compt V_2$ (a compact embedding) and $V_2 \embed V_3$ (a continuous embedding), and fix $1 < p < +\infty,
1 \leq q \leq +\infty$. Then \[W^{1,p,q} ([0,T]; V_1, V_3 )\compt L^p (I; V_2).\]
\end{lem*}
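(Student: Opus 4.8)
The plan is to prove the compact embedding directly: I will show that any sequence $(u_n)_n$ bounded in $W^{1,p,q}([0,T];V_1,V_3)$ possesses a subsequence that converges strongly in $L^p([0,T];V_2)$. Two classical tools drive the argument. The first is an Ehrling-type interpolation inequality, which converts the \emph{compactness} of the embedding $V_1\compt V_2$, together with the continuity of $V_2\embed V_3$, into a quantitative bound. The second is a Fréchet--Kolmogorov (Simon) compactness criterion for Bochner spaces, which reduces relative compactness of a bounded family in $L^p([0,T];V_2)$ to a single condition: uniform smallness of the time-translations $u(\cdot+h)-u(\cdot)$ in $L^p([0,T-h];V_2)$ as $h\to0$, given boundedness of the family in $L^p([0,T];V_1)$ with $V_1\compt V_2$.

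First I would establish the interpolation estimate: for every $\eta>0$ there exists $C_\eta>0$ with
\[ \|v\|_{V_2}\leq \eta\,\|v\|_{V_1}+C_\eta\,\rho(v)\qquad\text{for all }v\in V_1, \]
where $\rho$ is a continuous seminorm defining the topology of $V_3$. This is proved by contradiction: if it failed for some $\eta_0>0$, one could extract $(v_k)_k$ with $\|v_k\|_{V_2}=1$, $\|v_k\|_{V_1}\leq 1/\eta_0$ and $\rho(v_k)\to0$; compactness of $V_1\compt V_2$ then yields $v_k\to v$ in $V_2$ with $\|v\|_{V_2}=1$, whereas continuity of $V_2\embed V_3$ forces $\rho(v)=0$ for every such seminorm, hence $v=0$ — a contradiction.

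Next I would verify the translation condition of the compactness criterion. Boundedness of $(u_n)_n$ in $L^p([0,T];V_1)$ is immediate from the definition of $W^{1,p,q}$. For the time translations, the representation $u(t+h)-u(t)=\int_t^{t+h}\dot u(s)\wrt s$ (valid in the Bochner sense in $V_3$) together with the bound on $\dot u$ in $L^q([0,T];V_3)$ gives, by Hölder,
\[ \Big(\int_0^{T-h}\rho\big(u(t+h)-u(t)\big)^p\wrt t\Big)^{1/p}\leq C\,h^{\kappa}\,\|\dot u\|_{L^q([0,T];V_3)}\longrightarrow0\quad(h\to0), \]
with $\kappa>0$ depending on $p,q$, uniformly over bounded sets. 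Applying the interpolation inequality pointwise in $t$ and integrating the $p$-th power then yields
\[ \big\|u(\cdot+h)-u(\cdot)\big\|_{L^p([0,T-h];V_2)}\leq \eta\,C_1+C_\eta\,C\,h^{\kappa}, \]
where $C_1$ bounds the $L^p([0,T];V_1)$-norm; sending $h\to0$ and then $\eta\to0$ shows the translations vanish uniformly. The criterion then provides the strongly convergent subsequence, proving $W^{1,p,q}([0,T];V_1,V_3)\compt L^p([0,T];V_2)$.

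The principal difficulty is the genuinely locally convex (non-normable) nature of $V_3$: every step involving $V_3$ — the interpolation inequality, the fundamental-theorem-of-calculus representation of $u(t+h)-u(t)$, and the translation estimate — must be carried out with the defining seminorms (or the translation-invariant metric) of $V_3$ rather than a norm, and the passage from finitely many seminorms to the metric requires a routine but careful tail estimate. A secondary obstacle is the dependence of the Hölder exponent $\kappa$ on the endpoints $q=1$ and $q=\infty$, which must be treated separately, together with the fact that, if one does not take Simon's criterion as given, proving it requires an additional time-mollification argument — this is precisely the place where the compactness $V_1\compt V_2$ is upgraded to compactness of the full time-dependent family.
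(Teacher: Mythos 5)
A point of context first: the paper contains no proof of this statement at all --- it is the classical Aubin--Lions lemma, restated verbatim in Appendix A from \cite[Lemma 7.7]{Roubicek} purely for the reader's convenience --- so your attempt can only be measured against the standard proofs in the literature. Measured that way, your outline (Ehrling interpolation plus the Simon/Fr\'echet--Kolmogorov translation criterion) is a legitimate route, and it is genuinely different from the proof in \cite{Roubicek}, which instead extracts a weakly convergent subsequence in $L^p([0,T];V_1)$ --- this is exactly where the hypotheses of separability and reflexivity of $V_1$ enter --- and uses Ehrling to reduce strong $L^p(V_2)$-convergence to strong convergence in the $V_3$-seminorms, obtained by an equicontinuity (Arzel\`a--Ascoli/Vitali-type) argument. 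Your route never uses reflexivity or separability of $V_1$, so it proves a slightly more general statement; what the cited route buys is that it avoids the Bochner-space Fr\'echet--Kolmogorov machinery altogether.

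There is, however, one genuine flaw in your argument as written: the Ehrling inequality with a single continuous seminorm $\rho$ ``defining the topology of $V_3$'' is false in general, since a metrizable locally convex space is seminormable only in degenerate cases and no single seminorm separates points. The error surfaces inside your own contradiction argument: the sequence $(v_k)$ is constructed from the failure of the inequality for that one $\rho$, so the $V_2$-limit $v$ satisfies only $\rho(v)=0$ for that particular $\rho$, and the step ``for every such seminorm, hence $v=0$'' is unjustified. The standard fix is to take an increasing countable family $\rho_1\leq\rho_2\leq\ldots$ generating the topology of $V_3$ and run the contradiction diagonally: choose $v_j$ with $\|v_j\|_{V_2}=1$ and $\eta_0\|v_j\|_{V_1}+j\rho_j(v_j)<1$, so that $\rho_i(v_j)\leq\rho_j(v_j)<1/j$ for all $j\geq i$, whence $v_j\to 0$ in $V_3$ and Hausdorffness contradicts $\|v\|_{V_2}=1$. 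The price is that the conclusion must read: for every $\eta>0$ there exist a continuous seminorm $\rho_\eta$ \emph{and} $C_\eta>0$ with $\|v\|_{V_2}\leq\eta\|v\|_{V_1}+C_\eta\,\rho_\eta(v)$, the seminorm depending on $\eta$. This dependence is harmless downstream, since for each fixed $\eta$ you run the translation estimate with $\rho_\eta$, and boundedness in $W^{1,p,q}([0,T];V_1,V_3)$ gives a uniform bound on $\|\rho_\eta(\dot u_n(\cdot))\|_{L^q(0,T)}$; note also that the metric $d(\cdot,0)$ cannot substitute for a seminorm here, because the normalization in the contradiction argument needs positive homogeneity. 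Finally, the endpoint $q=1$, which you flag but do not execute: the naive H\"older exponent $\kappa = 1-1/q$ degenerates to $0$, but combining the uniform bound $\rho_\eta(u(t+h)-u(t))\leq\|\rho_\eta(\dot u)\|_{L^1(0,T)}$ with the Fubini estimate $\int_0^{T-h}\int_t^{t+h}\rho_\eta(\dot u(s))\wrt s\wrt t\leq h\,\|\rho_\eta(\dot u)\|_{L^1(0,T)}$ yields $\kappa=1/p>0$, so the scheme does close. With these two repairs your proposal is correct.
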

\begin{prop*}\cite[Proposition 2.1, (ii)]{KalNeuSch08}
Let $\rho, \epsilon> 0$ be such that
\[\|G (x) - G (\tilde{x}) - G'(x)(x - \tilde{x})\| \leq c(x, \tilde{x})\|G (x) - G (\tilde{x})\|,\quad
x, \tilde{x} \in B_\rho(x_0) \subseteq \mathcal{D}(G)\]
for some $c(x, \tilde{x})\geq 0$, where $c(x, \tilde{x}) < 1$ if $\|x-\tilde{x}\|\leq\epsilon$.\\
If $G (x) = y$ is solvable in $B_\rho(x_0)$, then a unique $x_0$-minimum-norm solution exists. It is characterized as the solution $x^\dagger$ of $G(x) = y$ in $B_\rho(x_0)$ satisfying the condition
\[x^\dagger - x_0 \in \mathcal{N}(G'(x^\dagger))^\perp.\]
\end{prop*}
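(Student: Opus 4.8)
The plan is to exploit the Hilbert-space structure of $X$ (implicit in the appearance of $\mathcal{N}(G'(x^\dagger))^\perp$) and to read the tangential cone inequality as a rigidity statement on the solution set $S := \{x \in B_\rho(x_0) : G(x) = y\}$, which is nonempty by the solvability assumption. The starting observation is purely algebraic: for any two solutions $x, \tilde x \in S$ one has $G(x) = G(\tilde x) = y$, so the right-hand side of the tangential cone inequality vanishes and hence
\[
\|G'(x)(x - \tilde x)\| = \|G(x) - G(\tilde x) - G'(x)(x - \tilde x)\| \leq c(x,\tilde x)\,\|G(x) - G(\tilde x)\| = 0 .
\]
Thus $G'(x)(x - \tilde x) = 0$, and interchanging the roles of $x$ and $\tilde x$ gives $G'(\tilde x)(x - \tilde x) = 0$ as well. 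In particular every difference of two solutions lies in the relevant null spaces; note that this step uses only finiteness of $c(x,\tilde x)$, not the bound $c<1$.

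First I would establish existence of a minimizer of $x \mapsto \|x - x_0\|$ over $S$. Since $S$ is contained in the bounded ball $B_\rho(x_0)$, a minimizing sequence is bounded and, by reflexivity of $X$, admits a weakly convergent subsequence $x_n \rightharpoonup x^\dagger$; invoking weak closedness of $G$ (as provided, e.g., by Lemma \ref{lem:closedness_forward_operator} in the settings of this paper) together with weak closedness of the ball yields $x^\dagger \in S$, while weak lower semicontinuity of the norm gives that $x^\dagger$ realizes the infimum. Fixing this $x^\dagger$ and setting $\mathcal{N} := \mathcal{N}(G'(x^\dagger))$, the algebraic step above shows $x - x^\dagger \in \mathcal{N}$ for every $x \in S$, i.e.\ $S \subseteq x^\dagger + \mathcal{N}$.

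Granting that $S$ fills out this affine set locally, the characterization and uniqueness follow from a single orthogonal decomposition. Writing, for arbitrary $x \in S$, $x - x_0 = (x - x^\dagger) + (x^\dagger - x_0)$ with $x - x^\dagger \in \mathcal{N}$, minimality of $\|x^\dagger - x_0\|$ forces $x^\dagger - x_0 \perp \mathcal{N}$: otherwise moving $x^\dagger$ by a small multiple of the $\mathcal{N}$-component of $x^\dagger - x_0$ would remain in $S$ and strictly decrease the distance to $x_0$. Once $x^\dagger - x_0 \in \mathcal{N}^\perp$ is known, the Pythagorean identity
\[
\|x - x_0\|^2 = \|x - x^\dagger\|^2 + \|x^\dagger - x_0\|^2
\]
holds for every $x \in S$, which simultaneously shows that $x^\dagger$ is the unique $x_0$-minimum-norm solution (equality forces $x = x^\dagger$) and that it is characterized by the stated orthogonality.

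I expect the genuine obstacle to be the step I glossed over, namely that the solution set is locally \emph{flat}: the elementary computation only yields the inclusion $S \subseteq x^\dagger + \mathcal{N}$, whereas the variational argument for $x^\dagger - x_0 \perp \mathcal{N}$ needs the reverse, that $x^\dagger + v \in S$ for all sufficiently small $v \in \mathcal{N}$. Equivalently, one must show that $\mathcal{N}(G'(x))$ is constant along $S$ and that no curvature of the solution manifold occurs at scale $\rho$. This is exactly where the quantitative refinement $c(x,\tilde x) < 1$ for $\|x - \tilde x\| \leq \epsilon$ enters (the bare identity never exploited it), and I would isolate it as a preliminary lemma — matching part (i) of the cited proposition — before assembling the projection and Pythagoras arguments above.
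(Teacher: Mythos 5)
The paper itself offers no proof of this statement: it is quoted from \cite{KalNeuSch08} in Appendix A precisely so that it can be used as a black box, so your attempt can only be measured against the standard argument of that reference --- whose skeleton (the algebraic step $G'(x)(x-\tilde x)=0$ for any two solutions, existence of a minimizer, perturbation inside $\mathcal{N}(G'(x^\dagger))$, Pythagoras) you do reproduce. The genuine gap is exactly the step you flag and defer: that $x^\dagger+v$ is again a solution for small $v\in\mathcal{N}(G'(x^\dagger))$. You correctly identify that this is where $c(x,\tilde x)<1$ must enter, but you overestimate its difficulty and consequently leave the proof's only substantive step unproven: no constancy of $\mathcal{N}(G'(x))$ along the solution set and no ``curvature'' analysis is needed. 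The lemma is one line (and is precisely part (i) of the cited proposition): for $v\in\mathcal{N}(G'(x^\dagger))$ with $\|v\|\le\epsilon$ and $x^\dagger+v\in B_\rho(x_0)$, apply the cone inequality to the pair $(x^\dagger,x^\dagger+v)$; the derivative term vanishes, leaving $\|G(x^\dagger)-G(x^\dagger+v)\|\le c(x^\dagger,x^\dagger+v)\,\|G(x^\dagger)-G(x^\dagger+v)\|$ with $c<1$, hence $G(x^\dagger+v)=G(x^\dagger)=y$. With this inserted (and noting that $\mathcal{N}(G'(x^\dagger))$ is a subspace, so $tv$ with $|t|$ small and both signs is admissible provided $x^\dagger+tv$ stays in the ball --- the appendix's closing remark about recentering the ball at $x^\dagger$ disposes of the boundary case), your perturbation and Pythagoras steps are correct and yield both the characterization and uniqueness.

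The second defect is your existence step: you invoke weak closedness of $G$ ``as provided by Lemma \ref{lem:closedness_forward_operator}''. That lemma concerns the specific forward operator of this paper and is not among the hypotheses of the proposition, which assumes only the tangential cone condition and solvability; a self-contained proof of the cited result cannot borrow it. It is also unnecessary: the cone condition itself gives $(1-c(x,\tilde x))\|G(x)-G(\tilde x)\|\le\|G'(x)(x-\tilde x)\|$ for $\|x-\tilde x\|\le\epsilon$, so $G$ is locally Lipschitz whenever $G'(x)$ is bounded, hence continuous, and the solution set $S$ is closed; moreover the one-line lemma above makes $S$ convex, since for $x,\tilde x\in S$ one has $\tilde x-x\in\mathcal{N}(G'(x))$ irrespective of their distance, so points of the segment within $\epsilon$ of a solution are again solutions and a continuity-plus-supremum argument propagates solvability along the whole segment. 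A nonempty closed convex bounded subset of a Hilbert space admits a unique nearest point to $x_0$, which delivers existence and uniqueness in one stroke from the stated hypotheses alone; your Pythagorean identity then supplies the orthogonality characterization as you wrote it.
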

Note that in this proposition, the claim does not change if the statement is made for the ball $B_\rho(x^\dagger)$ with $x_0\in B_\rho(x^\dagger)$.
}

\subsection{Proofs}

\begin{proof}[{\bf Proof of Lemma \ref{NN-Lipschitz}}]\label{appendix-proof-NN}
Observe that for any $z$, $\tilde{z}$, $\omega$, $\tilde{\omega}$, $\beta$ and $\tilde{\beta}$, the inequalities
\begin{equation}
\begin{aligned}\label{sigma_lipschitz}
|\sigma(\omega z + \beta) - \sigma(\omega \tilde{z} + \beta)| & \leq C_\sigma |\omega||z - \tilde{z}|, \qquad &
|\sigma'(\omega z + \beta) - \sigma'(\omega \tilde{z} + \beta)| & \leq C'_\sigma |\omega||z - \tilde{z}|, \\
|\sigma(\omega z + \beta) - \sigma(\tilde{\omega} z + \beta)| & \leq C_\sigma |z||\omega - \tilde{\omega}|, \qquad &
|\sigma'(\omega z + \beta) - \sigma'(\tilde{\omega} z + \beta)| & \leq C'_\sigma |z||\omega - \tilde{\omega}|, \\
|\sigma(\omega z + \beta) - \sigma(\omega z + \tilde{\beta})| & \leq C_\sigma |\beta - \tilde{\beta}|, \qquad &
|\sigma'(\omega z + \beta) - \sigma'(\omega z + \tilde{\beta})| & \leq C'_\sigma |\beta - \tilde{\beta}|
\end{aligned}
\end{equation}
lead to straightforward computations showing that for every layer $i$, $1\leq i\leq L$, one has
\begin{align}
\left|\Nc^i_{\theta^{i}}(z) - \Nc^i_{\theta^{i}}(\tilde{z})\right| & \leq (C_\sigma)^i \left(\prod_{k=1}^i\left|\omega^k\right|\right)\left|z - \tilde{z}\right|, \label{lipschitz_nn:z} \\
\left|\Nc^i_{\theta^{i}}(z) - \Nc^i_{\bar{\theta}^{i}}(z)\right| & \leq (C_\sigma)^{i-l+1} \left(\prod_{k=l+1}^i\left|\omega^k\right|\right)\left|\Nc^{l-1}_{\theta^{l-1}}(z)\right|\left|\omega^l - \tilde{\omega}^l\right| \text{ for $i\geq l$, $0$ otherwise}, \label{lipschitz_nn:w} \\
\left|\Nc^i_{\theta^{i}}(z) - \Nc^i_{\hat{\theta}^{i}}(z)\right| & \leq (C_\sigma)^{i-l+1}\left(\prod_{k=l+1}^i\left|\omega^k\right|\right)\left|\beta^l - \tilde{\beta}^l\right| \text{ for $i\geq l$, $0$ otherwise}, \label{lipschitz_nn:b}
\end{align}
which yields \eqref{lipschitz_estimates_nn} when $i=L$. \commA{Here, one recalls that $l$ is the fixed layer with regards to which we aim to compute derivatives and associated Lipschitz estimates.}

More care must be taken regarding the Lipschitz estimates \eqref{lipschitz_estimates_nn_derivative:z} for the derivatives. Recursively writing out the chain rule, define $A_L(z,\theta):=\sigma'(\omega^L\Nc_{\theta^{L-1}}^{L-1}(z)+\beta^L)\in\R$ and $$
	A_i(z,\theta):=A_{i+1}(z,\theta)\omega^{i+1}\sigma'(\omega^i\Nc^{i-1}(z,\theta^{i-1}) + \beta^i) \in \R^{1\times n_i} \text{ for $1\leq i<L$}
$$
(understanding $\sigma'(\omega^i\Nc^{i-1}(z,\theta^{i-1}) + \beta^i)$ as a diagonal matrix in $\R^{n_i\times n_i}$), which satisfies the estimate
$\sup_{(z,\theta)\in \Bc}\left|A_i(z,\theta)\right|\leq \left(\prod_{k=i+1}^Ls_k\left|\omega^k\right|\right)s_i$. Due to the chain rule, it is not difficult to see that
\begin{align}
\begin{split}\label{chain_rule}
\Nc'_z(z,\theta) = A_1(z,\theta)\omega^1 \in \R, \quad \Nc'_{\beta^l}(z,\theta) = A_l(z,\theta) \in \Lc(\R^{n_l},\R) \\
\Nc'_{\omega^l}(z,\theta)= \big[ \commA{\R^{n_l\times n_{l-1}}\ni w} \mapsto A_l(z,\theta)w\Nc^{l-1}(z,\theta^{l-1}) \in \R \big].
\end{split}
\end{align}
The estimate \eqref{lipschitz_estimates_nn_derivative:z} will now be shown via backwards induction, with the various constants defined in \eqref{lipschitz_constant_derivative} acting as the Lipschitz constants of the $A_i$. Begin by noting
\begin{align*}
\left|A_L(z,\theta) - A_L(\tilde{z},\theta)\right| & = \left|\sigma'(\omega^L\Nc^{L-1}(z,\theta^{L-1}) + \beta^L) - \sigma'(\omega^L\Nc^{L-1}(\tilde{z},\theta^{L-1}) + \beta^L)\right| \\
& \leq C'_\sigma\left|\omega^L\right|\left|\Nc^{L-1}(z,\theta^{L-1}) - \Nc^{L-1}(\tilde{z},\theta^{L-1})\right| \\
& \leq C'_\sigma \left|\omega^L\right|(C_\sigma)^{L-1}\left(\prod_{k=1}^{L-1}\left|\omega^k\right|\right)\left|z - \tilde{z}\right| = C^z_L\left|z-\tilde{z}\right|,
\end{align*}
where the first inequality is immediate from \eqref{sigma_lipschitz} and the second follows from \eqref{lipschitz_nn:z} with $i=L-1$.

Let now $1\leq i< L$ be arbitrary. Assume $\left|A_{i+1}(z,\theta) - A_{i+1}(\tilde{z},\theta)\right| \leq C^z_{i+1}\left|z-\tilde{z}\right|$, and observe
\begin{align*}
&|A_i(z,\theta)  - A_i(\tilde{z},\theta)| \\
 = &
\left|A_{i+1}(z,\theta)\omega^{i+1}\sigma'(\omega^i\Nc^{i-1}(z,\theta^{i-1}) + \beta^i) - 
  A_{i+1}(\tilde{z},\theta)\omega^{i+1}\sigma'(\omega^i\Nc^{i-1}(\tilde{z},\theta^{i-1}) + \beta^i)\right| \\
\leq & 
\left|A_{i+1}(z,\theta)\omega^{i+1}\sigma'(\omega^i\Nc^{i-1}(z,\theta^{i-1}) + \beta^i) - 
  A_{i+1}(z,\theta)\omega^{i+1}\sigma'(\omega^i\Nc^{i-1}(\tilde{z},\theta^{i-1}) + \beta^i)\right| \\ 
  & +
\left|A_{i+1}(z,\theta)\omega^{i+1}\sigma'(\omega^i\Nc^{i-1}(\tilde{z},\theta^{i-1}) + \beta^i) - 
  A_{i+1}(\tilde{z},\theta)\omega^{i+1}\sigma'(\omega^i\Nc^{i-1}(\tilde{z},\theta^{i-1}) + \beta^i)\right| \\
 = & 
\left|A_{i+1}(z,\theta)\right|\left|\omega^{i+1}\right|\left|\sigma'(\omega^i\Nc^{i-1}(z,\theta^{i-1}) - \beta^i) - 
  \sigma'(\omega^i\Nc^{i-1}(\tilde{z},\theta^{i-1}) + \beta^i)\right| \\ 
  & +
\left|A_{i+1}(z,\theta)-A_{i+1}(\tilde{z},\theta)\right|\left|\omega^{i+1}\right|\left|\sigma'(\omega^i\Nc^{i-1}(\tilde{z},\theta^{i-1}) + \beta^i)\right|.
\end{align*}
We apply \eqref{sigma_lipschitz}, then \eqref{lipschitz_nn:z} and the bound on $A_{i+1}$ to the first line, while we apply the induction assumption together with the definition of $s_i$ to the second line to obtain
\begin{align*}
& \left|A_i(z,\theta) - A_i(\tilde{z},\theta)\right| \\
\leq & \left|A_{i+1}(z,\theta)\right|\left|\omega^{i+1}\right|\left|\sigma'(\omega^i\Nc^{i-1}(z,\theta^{i-1}) - \beta^i) - 
  \sigma'(\omega^i\Nc^{i-1}(\tilde{z},\theta^{i-1}) + \beta^i)\right| \\ 
  & +
\left|A_{i+1}(z,\theta)-A_{i+1}(\tilde{z},\theta)\right|\left|\omega^{i+1}\right|\left|\sigma'(\omega^i\Nc^{i-1}(\tilde{z},\theta^{i-1}) + \beta^i)\right| \\
\leq &
\left|A_{i+1}(z,\theta)\right|\left|\omega^{i+1}\right|C'_\sigma\left|\omega^i\right|\left|\Nc^{i-1}(z,\theta^{i-1}) - \Nc^{i-1}(\tilde{z},\theta^{i-1})\right| +
C^z_{i+1}\left|z-\tilde{z}\right|\left|\omega^{i+1}\right|s_i \\
\leq &
\left[\left(\prod_{k=i+2}^Ls_k\left|\omega^k\right|\right)s_{i+1} \left|\omega^{i+1}\right|C'_\sigma\left|\omega^i\right|(C_\sigma)^{i-1} \left(\prod_{k=1}^{i-1}\left|\omega^k\right|\right)
+
C^z_{i+1}s_i\left|\omega^{i+1}\right|\right]\left|z-\tilde{z}\right| = C^z_i\left|z-\tilde{z}\right|.
\end{align*}
\eqref{lipschitz_estimates_nn_derivative:z} now follows immediately from  \eqref{chain_rule} and the fact that $|N'_z(z)|=1$, since this is a matrix with a single entry $1$ and otherwise consisting of zeros.

Completely analogous computations, employing \eqref{lipschitz_nn:w} and \eqref{lipschitz_nn:b}, respectively, in place of \eqref{lipschitz_nn:z}, similarly yield \eqref{lipschitz_estimates_nn_derivative:w} and \eqref{lipschitz_estimates_nn_derivative:b}, concluding the proof.

\end{proof}

\bigskip
\begin{proof}[\bf Proof of Proposition \ref{prop-adjoints-cont}, iii)]\label{appendix-proof-adjoint}
On $\Vc=H^1(0,T;V)\embed C(0,T;V)$, we impose the norm $\|\cdot \|_\Vc$ via the inner product
\[(u,v)_\Vc=\int_0^T (\dot{u}(t),\dot{v}(t))_V\,dt + (u(0),v(0))_V, \]
since it induces an equivalent norm to the standard norm $\|u\|_{H^1(0,T;V)}=\sqrt{\int_0^T \|\dot{u}(t)\|^2_V+\|u(t)\|^2_V\,dt}$. Indeed, from the estimates (c.f. \cite[Lemma 7.1]{Roubicek})%\todo{continuous in time implies absolutely continuous in time? (cf the assumptions for the fundamental theorem mentioned in the proof of Prop. \ref{prop-Differentiability})} \commA{added ref}
\begin{align*}
&\|u(t)\|_V\leq \|u(0)\|_V+\int_0^T\|\dot{u}(t)\|_V\,dt \leq \max\{\sqrt{2},\sqrt{2T}\}\|u\|_\Vc \,\,\Rightarrow\,\, \|u\|_\ltv\leq  \sqrt{2}\max\{\sqrt{T},T\}\|u\|_\Vc,
\end{align*}
such that $\|u\|_{H^1(0,T;V)} \leq  c\|u\|_\Vc$ for $c>0$, and
\begin{align*}
&\|u(0)\|_V\leq \|u(t_0)\|_V+\int_0^{t_0}\|\dot{u}(t)\|_V\,dt\leq  \int_0^T \frac{\|u(t)\|_V}{T}+\|\dot{u}(t)\|_V\,dt\leq \sqrt{2}\max\{\frac{1}{\sqrt{T}},\sqrt{T}\}\|u\|_{H^1(0,T;V)}.
\end{align*}
for some $t_0\in(0,T)$ such that $\|u\|_\Vc \leq C \|u\|_{H^1(0,T;V)} $ for $C>0$. Here, we have used $\|u\|_V=\|u\|_\hto:=\sqrt{\|\Delta u\|^2_\ltn+\|\nabla u\|^2_\ltn}$, which is an equivalent norm on $\hto$ as a consequence of the Poincar\'e-Friedrichs inequality.

At first, we carry out some general computations. First note that $L^2(\Omega)\ni k^z\mapsto \ztil\in H^2(\Omega)\cap H^1_0(\Omega)$ is well-defined due to unique existence of the solution to the linear auxiliary problems \eqref{auxiliaryPDEs}. Thus, with $k^z(t) \in L^2(\Omega)$ and $\ztil(t)\in H^2(\Omega)$ as in \eqref{auxiliaryPDEs}, for any $v(t) \in H^2(\Omega)$, we can write the identity 
\begin{align*}
(v,\ztil)_\htn&=\int_\Omega\Delta v\Delta\ztil + \nabla v\cdot\nabla\ztil\wrt x=\int_\Omega  \nabla v\cdot\nabla z_1+ vz_1\wrt x =\int_\Omega v(-\Delta z_1+z_1)\wrt x=(v,k^z)_\ltn.
\end{align*}
Given $\ztil\in L^2(0,T;H^2(\Omega))$, let $u^z \in H^1(0,T;H^2(\Omega))=\Vc$ be the solution of the ordinary equation
\begin{equation}\label{auxiliaryODE}
\begin{split}
&\uzddot(t)=-\ztil(t) \qquad t\in(0,T)\\
&\uzdot(T)=0, \quad \uzdot(0)-u^z(0)=0,
\end{split}
\end{equation}
Now let $K:\Vc \rightarrow L^2(0,T;L^2(\Omega))$ be any bounded, linear operator.
%\note{, and let $\Ktil$ be the $\ltlt$-adjoint of $K:\Vc\to \ltlt$ (meaning that $\Ktil:\ltlt\to \ltlt$) \todo{how is $\Ktil$ obtained from $K$ precisely}.}
For $z\in\ltlt, v\in \Vc$, let $k^z \in \ltlt$ be such that \[(z,Kv)_\ltlt = (k^z,v)_\ltlt \quad\text{then define}\quad \widetilde{K}z:=k^z.\] 
%Then, with $\ztil(t), z_1(t) $ solutions of \eqref{auxiliaryPDEs} given $k^z(t)$ at every $t \in (0,T)$, and $u^z$ a solution of \eqref{auxiliaryODE} given $\ztil$, we compute
Then
\begin{align*}
&(z,Kv)_\ltlt=:\int_0^T(k^z(t),v(t))_\ltn\,dt=\int_0^T(\ztil(t),v(t))_\htn\,dt=:\int_0^T(-\uzddot(t),v(t))_\htn\,dt\\
&\quad=\int_0^T(\uzdot(t),\dot{v}(t))_\htn\,dt+(u^z(0),v(0))_\htn - (\uzdot(T),v(T))_\htn+(\uzdot(0)-u^z(0),v(0))_\htn\\
&\quad=(u^z,v)_\Vc.
\end{align*}
Using the fact that $u^z\in \Vc$ in \eqref{auxiliaryODE} can be computed analytically, we obtain $K^*:\ltlt\to\Vc$ via
\begin{align*}
K^*z&=u^z=\int_0^T(t+1)\ztil(t)\,dt-\int_0^t(t-s)\ztil(s)\,ds\\
&=\int_0^T(t+1)\Dinv\Ktil z(t)\,dt-\int_0^t(t-s)\Dinv\Ktil z(s)\,ds.
\end{align*}
With this derivation, $g_{2,3}=M^*z$ with $M=\text{Id}:\Vc\to\Yc=\ltlt$ can be obtained by setting $K = M$, thus $\Ktil=\text{Id}$, yielding the adjoint as in \eqref{adjoint-M}.
%\note{With this derivation, the claimed form of $g_{2,3}$ as in \eqref{adjoint-M} is obtained immediately by setting $K=M$ and noting that, in this case, $\tilde{K}z = z$.}

%\todo{shorter: defining $K$, argue form of $\tilde{K}$. Then $\left(z,Kv \right)_\Wc = (u^z,v)_\Vc$ is immediate from the above, only need to compute with $\frac{d}{dt}$. For this, again define involved quantities first and then compute. Here: discuss time derivative of $\zttil$}
We then compute $g_{2,1}$. For $\frac{d}{dt}+K:=\frac{d}{dt}-F'_u(\lambda,u)-\Nc_u'(u,\theta):\Vc\to\Wc=\ltlt$, one has, for $z\in\Wc, v\in\Vc,$
%{\allowdisplaybreaks
\begin{align*}
&\left(z,\left(\frac{d}{dt}+K\right)v\right)_\Wc=\int_0^T(z(t),Kv(t))_\ltn\,dt+\int_0^T(z(t),\dot{v}(t))_\ltn\,dt\\
&=\int_0^T(\Ktil z(t),v(t))_\ltn\,dt + \int_0^T\left(\frac{d}{dt}\left(\int_0^t z(s)\,ds\right),\dot{v}(t)\right)_\ltn\,dt\\
&=:\int_0^T(k^z(t),v(t))_\ltn\,dt + \int_0^T(\dot{h^z}(t),\dot{v}(t))_\ltn\,dt\\
&=\int_0^T(\ztil(t),v(t))_\htn\,dt + \int_0^T(\frac{d}{dt}\zttil(t),\dot{v}(t))_\htn\,dt=:\int_0^T(-\uzddot(t),v(t))_\htn+(\frac{d}{dt}\zttil(t),\dot{v}(t))_\htn\,dt\\
&=\int_0^T(\uzdot(t),\dot{v}(t))_\htn\,dt+(u^z(0),v(0))_\htn + \int_0^T(\frac{d}{dt}\zttil(t),\dot{v}(t))_\htn\,dt +  (\zttil(0),v(0))_\htn\\
&=(u^z+\zttil,v)_\Vc,
\end{align*}%}
where $k^z,\ztil$ are the same as before, and $\zttil$ solves \eqref{auxiliaryPDEs} with $h^z:=\int_0^t z(s)\,ds$ in place of $k^z$. Above, we notice that $\zttil(0)=0$ since $h^z(0)=0$ and unique existence result of linear PDEs in \eqref{auxiliaryPDEs}. $u^z$ is still, as defined earlier, the solution to \eqref{auxiliaryODE}.
For $K=-F'_u(\lambda,u)-\Nc_u'(u,\theta)$, we deduce 
\[\Ktil z=-\nabla\cdot(a\nabla z)+c-\Nc_u'(u,\theta)z\] 
yielding $g_{2,1}$ as in \eqref{adjoint-FuNu}.

The next adjoint $g_{1,1}=-F'_\lambda(\lambda,u)^*$ is computed as follows. For $z\in\Wc, \xi\in X$,
\begin{align*}
(z,K\xi)_\Wc&=\int_0^T (z(t),K\xi)_\ltn\,dt
%= \int_0^T\left(\Ktil z(t),\xi \right)_\ltn\,dt
=
\begin{cases}
\left(\int_0^T\Ktil z(t)\,dt,\xi \right)_\ltn \quad\text{if } \lambda=c \text{ or } \lambda=\varphi\\[1.5ex]
\left(\int_0^T\ztil(t)\,dt,\xi \right)_\htn \qquad \text{if } \lambda=a
\end{cases}=(K^*z,\xi)_X,
\end{align*}
where $\Ktil$ is the $\ltn$-adjoint of $K=-F'_\lambda(\lambda,u)$; and $\ztil$ solves \eqref{auxiliaryPDEs} for $k^z:=\Ktil z$. \eqref{adjoint-Flambda} follows by
\begin{alignat*}{2}
&\lambda=c:\quad &&\Ktil z= zu,\qquad\qquad \lambda=\varphi: \quad \Ktil z=-z,\\
&\lambda=a: &&\ztil= \Dinv(-\nabla\cdot(z\nabla u)).
\end{alignat*}

The adjoint for $g_{2,2}=(\cdot)^*_{t=0}$ can be derived in a similar manner.

%For $g_{2,2}=(\cdot)^*_{t=0}$, let  $h\in U_0=\htn, v\in\Vc$ then \eqref{adjoint-t0} is shown via
%\begin{align*}
%(h,v(0))_\htn&=\int_0^T(\dot{h},\dot{v}(t))_\htn\,dt+(h,v(0))_\htn=(h,v)_\Vc.
%\end{align*}

We now compute the last adjoint $g_{3,1}=-\Nc_\theta'(u,\theta)^*$ involving the neural network with weights $\weight$, biases $\bias$ and the fixed activation $\sigma$. With the architecture mentioned at the beginning of this section, we define by $a_l$ the output of the l-th layer 
\begin{align*}
a_l=\sigma(\weight_l  a_{l-1}+\bias_l), \quad a_0=\text{input data } u\qquad l=1\ldots L,
\end{align*}
and introduce
\begin{align*}
a_l'=\sigma'(\weight_l  a_{l-1}+\bias_l), \quad a'_0=\text{input data }u \qquad l=1\ldots L,
\end{align*}
with $\sigma=\text{Id}$ in the L-th (output) layer, and $\sigma'$ is the derivative of $\sigma$.\\
In each layer, one searches for the unknown $\theta_l=(\weight_l,\bias_l)\in \R^{n_l\times n_{l-1}}\times\R^{n_l}$. For any $Q\in\R^{n_l\times n_{l-1}}, z\in\Wc$,
\begin{align*}
&\left(\nabla_{\weight_l}\Nc(u,\theta) Q,z \right)_\Wc=\int_0^T\int_\Omega \weight_L  a'_{L-1} \ldots  \weight_{l+1}  a'_l   Q  a_{l-1}\,z \wrt x\,dt\\
&=Q \cdot\int_0^T\int_\Omega (a_{l-1}  \weight_L  a'_{L-1} \ldots  \weight_{l+1}  a'_l)^T z \wrt x\,dt =Q \cdot\int_0^T\int_\Omega  {a'}_l^T  \weight_{l+1}^T \ldots {a'}_{L-1}^T  \weight_L^T  a_{l-1}^T \,z \wrt x\,dt\\
&=:Q \cdot\int_0^T\int_\Omega \delta_l  a_{l-1}^T \,z \wrt x\,dt\quad=: Q \cdot K^*_l z,
\end{align*}
where $K^*_l$ is indeed the desirable adjoint $\nabla_{\weight_l}\Nc(u,\theta)^*$ in layer l-th. With the use of $\delta_l$, one can perform a recursive routine for computing the adjoints in all layers, starting from the last layer
\begin{align*}
&\delta_L = 1, \qquad \delta_{l-1}= {a'}^T_{l-1} \weight^T_l \delta_l,  \quad\qquad l=L\ldots 2\\
&\nabla_{\weight_{l-1}}\Nc(u,\theta)^*z= \int_0^T\int_\Omega \delta_{l-1}  a_{l-2}^T \,z \wrt x\,dt.
\end{align*}
A similar derivation yields $\nabla_{\bias_{l-1}}\Nc(u,\theta)^*z$, completing \eqref{adjoint-Ntheta}.
%In the same vein, for any $q\in\R^{n_l}, z\in\Wc$,
%\begin{align*}
%\left(\nabla_{b_l}\Nc(u,\theta) q,z \right)_\Wc&=\int_0^T\int_\Omega \weight_L\cdot a'_{L-1}\cdot\ldots\cdot \weight_{l+1}\cdot a'_l \cdot q\,z \wrt x\,dt\\
%&=q:\int_0^T\int_\Omega (\weight_L\cdot a'_{L-1}\cdot\ldots\cdot \weight_{l+1}\cdot a'_l)^T \,z \wrt x\,dt=q:\int_0^T\int_\Omega \delta_l \,z \wrt x\,dt\quad=: q:H^*_l z
%\end{align*}
%implies $H^*_l=\nabla_{b_l}\Nc(u,\theta)^*$ with a recursive computation scheme
%\begin{align*}
%&\delta_L = 1, \qquad \delta_{l-1}= {a'}^T_{l-1}\cdot\weight^T_l\cdot\delta_l,  \quad\qquad l=L\ldots 2\\
%&\nabla_{\bias_{l-1}}\Nc(u,\theta)^*z= \int_0^T\int_\Omega \delta_{l-1}\,z \wrt x\,dt.
%\end{align*}

\end{proof}

%\paragraph*{Acknowledgments.}
\bibliography{mh_lit_dat}
\bibliographystyle{abbrv}

%\commA{Thoughts:
%\begin{itemize}
%\item add short "Algorithm". e.g at the end of Sec 1.1? as many ML papers have this.
%\item stress more on key contributions in section Introduction (before moving to Sec 1.1?)
%\item move some to Appendix. e.g Step 1-2 (thus Theo. 21) for Prop 23, proof of Prop. 30? 
%\item comment out some intermediate estimates,... to save space.
%\item skip, e.g. Remark 27,... to save space.
%\end{itemize}}

\end{document}